\def\wbar{\accentset{{\cc@style\underline{\mskip8mu}}}}
\newcommand{\f}{f}
\newcommand{\g}{g}
\newcommand{\M}{\ensuremath{\mathcal{M}}}
\newcommand{\PA}{\mathcal{P}}
\newcommand{\R}{\ensuremath{\mathbb{R}}}
\newcommand{\A}{\ensuremath{\mathcal{A}}}
\def\D{{\mathcal D}}
\renewcommand{\vec}[1]{{#1}}
\def\CH{C\!H^+}
\def\CVH{CV\!H^+}
\DeclareMathOperator*{\Ker}{\mathrm{Ker}}
\DeclareMathOperator*{\Ran}{\mathrm{Ran}}
\begin{document}

\title{Nonlinear Spectral Duality
}

\titlerunning{Nonlinear Spectral Duality}        

\author{Francesco Tudisco       \and
        Dong Zhang 
}


\institute{F. Tudisco \at
              School of Mathematics\\
    Gran Sasso Science Institute\\
    67100 L'Aquila (Italy) \\
    \email{francesco.tudisco@gssi.it}       
           \and
           D. Zhang \at
           LMAM and School of Mathematical Sciences\\
        Peking University\\ 
      100871 Beijing  (China)\\
        \email{dongzhang@math.pku.edu.cn}
}

\date{Received: date / Accepted: date}

\maketitle

\begin{abstract}
Nonlinear eigenvalue problems for pairs of homogeneous convex functions are particular nonlinear constrained optimization problems that arise in a variety of settings, including graph mining, machine learning, and network science. 
By considering different notions of duality transforms from both classical and recent convex geometry theory, in this work we show that one can move from the primal to the dual nonlinear eigenvalue formulation maintaining the spectrum, the variational spectrum as well as the corresponding multiplicities unchanged. These  nonlinear spectral duality properties can be used to transform the original optimization problem into various alternative and possibly more treatable dual problems. We illustrate the use of nonlinear spectral duality in a variety of example settings involving optimization problems on graphs, nonlinear Laplacians, and distances between convex bodies.   
\keywords{Norm duality \and Legendre transform \and Polarity transform \and Homogeneous functions \and Nonlinear eigenproblems \and Graph Laplacian}
\subclass{
90C46 \and 
52A41 \and 
47J10 \and 
49N15 \and 
90C27 \and 
05C50 
%
}
\end{abstract}

\section{Introduction and motivation}\label{sec:introduction}

 The critical values and critical points of the ratio of convex homogeneous functions $f(x)/g(x)$ define (sometimes only a part of) the nonlinear spectrum of the functions pair $(f,g)$. This type of nonlinear eigenvalue problem appears in a wide range of applications. Examples include graph-based machine learning, where the spectral properties of different notions of nonlinear graph and hypergraph Laplacian operators play a central role in unsupervised and semi-supervised classification algorithms \cite{bresson2014multi,Bhuler,calder2018game,Elmoataz,flores2022analysis,prokopchik2022,slepcev2019analysis,Tudisco1}; the approximation of matrix and tensor norms \cite{GHT20,gautier2019unifying,nguyen2017efficient}; the solution of the Gross-Pitaevskii equation in quantum chemistry \cite{cai2018eigenvector,saad2010numerical,upadhyaya2021density}; the identification and analysis of relevant mesoscopic structures in complex networks, such as central nodes, communities and core-periphery \cite{boyd2018simplified,hu2013method,tudisco2018core,tudisco2021nodeandedge,tudisco2018community}; the optimization of polynomials and generalized polynomials on the unit sphere \cite{gautier2016globally,gautier2019unifying,zhou2012nonnegative}. 
 
 A number of complications arise when moving from the  classical matrix eigenvalue problem to the  nonlinear one, starting from the fact that  the number of eigenvalues and eigenvectors is no longer bounded by the space dimension. However, in most cases one can use the Lusternik-Schnirelmann theory combined with the Krasnoselski genus to define a sequence of variational eigenvalues by means of a Courant-Fisher-like minmax characterization. This subset of variational eigenvalues has very useful properties in most application settings. However, unlike the linear case,  evaluating, computing,  or approximating the variational  eigenvalues is in general a very challenging problem in the nonlinear case, which boils down to a nonsmooth optimization problem for pairs of homogeneous convex functions. 
 
 
 In this paper, we focus on the family of function pairs $(f,g)$ that, on top of being  homogeneous and  convex, are nonnegative and thus have a linear kernel. These properties are very common in a range of applications, as we will further detail in Section \ref{sec:app}.


 For this type of functions, we define three duality transforms obtained by adapting the norm duality, the Fenchel's convex conjugate  (i.e. Legendre transform) and the  polarity transform (or  $\A$-transform) \cite{AM11,AR17}.
Thus, we 
 provide three main results showing that the variational spectrum as well as its multiplicities are invariant under these duality transforms. These novel theoretical properties have a number of useful  implications as they allow us to move from a given nonlinear eigenvalue problem to several new dual problems which, depending on the particular setting, may result in a more treatable optimization problem or may reveal useful properties that are difficult to observe and to prove using the primal eigenvalue formulation. 

 For example, if $f=g=\|\cdot\|$ are norms, convergence guarantees for the fixed point iteration method to compute $\max f(x)/g(x)$ may be obtained using the dual pair, while the same method may fail to converge for the primal problem~\cite{GHT20}.
 Similarly, a variety of  established algorithms for nonlinear eigenproblems such as the inverse iterations \cite{HyndLindgren17,jarlebring2014inverse}, the family of RatioDCA methods \cite{NIPS2011_193002e6,tudisco2018community}, the MBO energy landscape and active set search methods for graph total variation \cite{boyd2018simplified,cristofari2020total,hu2013method}, or  the continuous gradient-flow approach \cite{BungertBurger22,FAGP19},  can be directly transferred to the dual eigenvalue equations. The resulting dual iteration or dual flow can be used to solve the optimization of the primal eigenvalue problem and may behave better in practice. 
 Several more specific application settings where nonlinear spectral duality may be used are illustrated in Section \ref{sec:app}. Some of the example settings there discussed contain new results we obtain as a consequence of our spectral duality theory.  

Our work is based upon and directly complements the recent paper  \cite{JostZhang21}, 
where the authors provide preliminary results on nonlinear  spectral duality. Although the  theorems in \cite{JostZhang21} work for norm duality and convex conjugate, no investigation on   multiplicities  and variational eigenvalues is carried out there and, moreover, they require additional positivity assumptions on the associated functions.


The rest of the paper is structured as follows: In Section \ref{sec:spectrum_fg} we introduce the class of functions of interest and the associated notions of spectrum and variational spectrum. In Section \ref{sec:norm_dual} we introduce the notion of norm-like dual for the class of one-homogeneous functions of interest and we review several preliminary properties for this duality operator, Then, in Section \ref{sec:main} we present our main result, showing the spectral invariance for one-homogeneous functions under norm-like duality. In Section \ref{sec:AL} we then move on to the class of $p$-homogeneous functions, for $p\geq 1$. We introduce the Legendre and polarity duality mappings and we extend  the nonlinear spectral duality theorem to these two alternative notions of duality. Finally, in Section \ref{sec:app} we illustrate a number of example problems from graph theory, network science, and convex geometry, where the new spectral duality theory can be used to provide new insight.

\subsection{Notation}
We  deal with  real finite-dimensional spaces, thus we will equivalently write $\vec x^\top \vec y$ or $\langle \vec x, \vec y\rangle$ to denote the Euclidean scalar product. For an operator (or a function) $\f$, we let   $f^{-1}(\vec y)=\{\vec x:f(\vec x)=\vec y\}$ denote the preimage of $f$ at $\vec y$ and we equivalently write $\Ker( \f)$ and $\f^{-1}(\vec 0)$ to denote the set $\{\vec x : \f(\vec x) = \vec 0\}$. 
 We do not differentiate between 
 a matrix $A\in\R^{n\times m}$ and the corresponding linear map $A:\R^m\to\R^n$. For a set $S$, we write  $\mathrm{cone}(S):=\{\lambda \vec v \, : \, \lambda >0,\vec v\in S\}$.

\section{Convex homogeneous functions and their spectrum }\label{sec:spectrum_fg}
Consider two real valued functions $\f,\g:\R^n\to \R$ and suppose they are differentiable. The critical points and critical values of the ratio $r(\vec x) = \f(\vec x)/\g(\vec x)$, i.e.\ the pairs $(\lambda,\vec x^*)$ such that $\nabla r(\vec x^*)=0$ and $r(\vec x^*)=\lambda$,  define what we call (nonlinear) spectrum of the function pair $(\f,\g)$. This is because, $\nabla r(\vec x^*)=0$ if and only if $\vec x^*$ is such that 
$$
\nabla \f (\vec x^*) = \lambda \nabla \g(\vec x^*)\, .
$$
This definition still makes sense without the differentiability assumption. In that case, we can consider  Clarke's  sub-differential $\partial$ to show that if $0 \in \partial r(\vec x^*)$ then $0\in \partial \f(\vec x^*)-\lambda \partial \g(\vec x^*)$. However, the reverse implication is in general not true without assuming the functions to be differentiable. Overall, we have

\begin{definition}
Given $\f,\g:\R^n \to \R$, we call $(\lambda,\vec x)$ an eigenpair for the function pair $(f,g)$ if 
$$
0 \in \partial \f(\vec x) - \lambda \partial \g(\vec x) 
$$
where $\partial$ denotes Clarke's generalized derivative \cite{Clarke}.
\end{definition}

In the linear setting, eigenvectors are defined up to scale. The same fundamental property holds when $f$ and $g$ are homogeneous functions. 
Recall that a  function $f:\R^n \to \R$ is (positively) $p$-homogeneous if $f(\lambda \vec x)= \lambda^p f(\vec x)$ for all $\vec x \in \R^n$ and all $\lambda \in \mathbb R$, $\lambda >0$. We call $p$ the homogeneity degree of $f$. For the special cases  $p=1$ and  $p=0$ we equivalently say that $f$ is one-homogeneous and scale-invariant, respectively. In particular, in this work, we will focus on the class of homogeneous functions that are convex and have a linear kernel. This type of functions appears frequently in a large number of applications, some of which are discussed in Sections \ref{sec:introduction} and \ref{sec:app}. Precisely, we define
\begin{definition}\label{def:CH_p}
For $p\geq 1$,  let $\CH_p(\R^n)$ denote the collection of all positively $p$-homogeneous functions $\f:\R^n\to \R$ with the following properties:
\begin{enumerate}
\item $f$ is convex and nonnegative, i.e.\ $f(\vec x) \geq 0$ for all $\vec x \in \R^n$;
\item $\Ker(f)$ is a linear subspace of $\R^n$.
\end{enumerate}
\end{definition}
We remark that properties 1 and 2 above imply that any $f\in \CH_p(\R^n)$ is such that $\f(\vec x+\vec z)=\f(\vec x)$ for any $\vec z\in \Ker(f)$ and $\vec x\in\R^n$. A possible proof of this property is as follows.
Assume the contrary holds:  $f(\vec x+\vec z)>f(\vec x)$ for some $\vec x\in\R^n$ and  $\vec z\in \Ker (f)$. Fix such $\vec x$ and $\vec z$, and let $\delta=f(\vec x+\vec z)-f(\vec x)>0$. By the convexity  of $f$, for any $t\ge0$, $\frac{1}{1+t}f(\vec x+(1+t)\vec z)+\frac{t}{1+t}f(\vec x)\ge f(\vec x+\vec z)$,  which is equivalent to 
\begin{equation}\label{eq:by-convexity-f}
f(\vec x+(1+t)\vec z)\ge f(\vec x+\vec z)+t(f(\vec x+\vec z)-f(\vec x))=f(\vec x+\vec z)+t\delta.
\end{equation}
Since $\Ker(f)$ is a vector space, $\vec z\in\Ker (f)$ implies  $(1+t)\vec z\in\Ker (f)$.  Then, it follows from $(1+t)\vec z\in\Ker (f)$ and the convexity and $p$-homogeneity of $f$ that 
 $$\frac12f(\vec x)=\frac{f(\vec x)+f((1+t)\vec z)}{2}\ge f\left(\frac{\vec x+(1+t)\vec z}{2}\right)=\frac1{2^p} f(\vec x+(1+t)\vec z)$$ which yields $2^{p-1}f(\vec x)>f(\vec x+(1+t)\vec z)$. Together with \eqref{eq:by-convexity-f}, we obtain $2^{p-1}f(\vec x)>f(\vec x+\vec z)+t\delta$ for any $t>0$, but it is impossible, because the right-hand-side tends to $+\infty$ when we take $t\to+\infty$.

In general, there can be infinitely many eigenvalues for a function pair, unless $\f$ and $\g$ are quadratic, in which case the corresponding eigenpairs are standard linear eigenvalue problems.
One remarkable properties of the spectrum of homogeneous function pairs is that, when $\f$ and $\g$ are homogeneous with the same homogeneity degree  and $g(\R^n\setminus \vec 0)\subseteq \R\setminus \vec 0$, similarly to the linear eigenvalue problem case, we can identify a set of $n$ variational eigenvalues for the function pair $(f,g)$ via the Lusternik-Schnirelmann theory. In fact, in that case  the ratio $r(\vec x) = f(\vec x)/g(\vec x):\R^n\setminus \vec 0 \to\R$ is scale invariant and one has that $\vec 0 \in \partial r(\vec x)$ implies that the pair $(r(\vec x),\vec x)$ is an eigenpair for $(f,g)$. 
Hence, 
a set of $n$ eigenvalues for  $(f,g)$ can be identified via the following variational characterization: 
\begin{equation}\label{eq:variational_eigs}
   \lambda_k =  \lambda_k(f,g)= \inf_{\substack{\mathrm{genus}(S)\ge k\\ S\subset \R^n\setminus\vec0}}\; \sup\limits_{\vec x\in S}\; r(\vec x),\qquad  k=1,\cdots,n,
\end{equation}
where $\mathrm{genus}(S)$ denotes the Krasnoselski's genus of the closed, symmetric set $S$ (see e.g.\ \cite{Krasnoselski}), whose precise defintion is recalled below.

\begin{definition}[Krasnoselksii genus]\label{def:krasno}
  Let $\mathcal A$ be the class of
  closed symmetric subsets of $\R^n$,
  $\mathcal A=\lbrace S \subseteq \R^n :   S \text{ closed, }
  S=-S\rbrace\; .$
  For any $S\in\mathcal A$, let $C_k(S)=\{\varphi:S\to \mathbb R^k\setminus\{0\}, \text{continuous, s.t. $\varphi(x)=-\varphi(-x)$}\}$.  The Krasnoselskii genus of $S$ is the number defined as
  \begin{equation*}
    \mathrm{genus}(S)=
    \begin{cases}
      \inf\lbrace k\in\mathbb{N}\,: \,
      \exists\,\varphi\in C_k(S) \rbrace &
       \\
   \infty, \text{ if there exists no such $k$}
       \\
   0,   \text{ if $A=\emptyset$}
    \end{cases}\, .
  \end{equation*}
\end{definition}

This definition of variational eigenvalues \eqref{eq:variational_eigs} is a generalization of the Courant-Fisher min-max characterization of the  eigenvalues $Ax = \lambda Bx$ of the  pair of symmetric matrices $(A,B)$. In fact,  the Krasnoselski genus is a homeomorphism-invariant generalization to symmetric sets of the notion of dimension. In particular, $\mathrm{genus}(S)\geq k$ for any linear subspace $S\subseteq \R^n$ of dimension greater than $k$. Thus,   Courant-Fisher's characterization is retrieved from \eqref{eq:variational_eigs} when $S$ is any linear subspace, the genus is replaced by the dimension of $S$ and $(f,g)$ are the quadratic functions $f(x) = x^\top A x$ and $g(x) = x^\top B x$. 
In particular, note that  $\lambda_n(f,g)=\max_{x\neq 0}r(x)$, $\lambda_1(f,g) = \min_{x}r(x)$  and that, since $f^{-1}(0)$ is linear, the smallest nonzero eigenevalue of $(f,g)$ always coincides with the smallest nonzero variational eigenvalue, i.e.,
\[\lambda_{d_f+1}(f,g) = \min\{\lambda \text{ eigenvalue of }(f,g) : \lambda >0 \}
\]
where $d_f = \dim f^{-1}(0)$.

\begin{remark}[On the use of the Lusternik-Schnirelmann category index]
The Krasnoselski's genus is arguably the most popular index function in the context of variational eigenvalues for nonlinear function pairs. However, when $r$ is not even, this index cannot be used and other set measures may be required. One possibility is to use the original Lusternik-Schnirelmann category index $\mathrm{cat}(S)$ \cite{LS34,Ballmann,CLP03,FMV15}. However, since $\R^n\setminus\{\vec0\}$ is homotopy equivalent to $\mathbb{S}^{n-1}$ and $r=f/g$ is zero-homogeneous on $\R^n\setminus\{\vec0\}$, it follows from $\mathrm{cat}(\mathbb{S}^{n-1})=2$ that the 
original Lusternik-Schnirelmann category can only characterize the minimum and maximum eigenvalues in general (in contrast,  $\mathrm{genus}(\mathbb{S}^{n-1})=n$ means that the genus can be used to characterize $n$ variational  eigenvalues when $r$ is even). To characterize more variational  eigenvalues for $r$ not even, we need to add further assumptions on $r$. For example, if   $r:(\R^{n_1}\setminus\{\vec0\})\times\cdots\times(\R^{n_m}\setminus\{\vec0\})\to\R$ is a locally Lipschitz function which is zero-homogeneous on each component, that is, $r(t_1\vec x^1,\cdots,t_m\vec x^m)=r(\vec x^1,\cdots,\vec x^m)$ for any $t_i>0$ and  $\vec x^i\in\R^{n_i}$,  $i=1,\cdots,m$, we may use the Lusternik-Schnirelmann category to define $m+1$ eigenvalues of $(f,g)$, as   $(\R^{n_1}\setminus\{\vec0\})\times\cdots\times(\R^{n_m}\setminus\{\vec0\})$ is homotopy equivalent to $\mathbb{S}^{n_1-1}\times\cdots\times\mathbb{S}^{n_m-1}$ whose category is $m+1$. We emphasize that all the theorems of this paper hold unchanged if $\mathrm{genus}$ is replaced by $\mathrm{cat}$. We omit the required straightforward adjustments to the corresponding proofs for the sake of brevity. 
\end{remark}

In the next sections, we will consider three notions of duality transforms for functions in $\CH_p(\R^n)$: the norm duality, the 
Legendre transform and the polarity transform \cite{AM11,AR17}.
To ensure that the class of functions $\CH_p(\R^n)$ is closed under such transforms, we make a small modification to these dual operations by composing them with  the orthogonal projection onto $\Ker(f)^\bot$, as we will detail later. 
If one wants to study  classes of convex and homogeneous functions where $\Ker(f)$ can be nonlinear and can take the value $+\infty$, one should instead use the standard versions  of these dual  operations. 
It is quite interesting that most of the results we present in this paper still hold in a certain  sense if we use the standard versions of   the three transforms, as we will briefly discuss in Section \ref{sec:comparison}.

\section{Norm-like duality} \label{sec:norm_dual}
Any norm $\|\cdot \|$ on $\R^n$ is a convex, one-homogeneous, nonnegative function and admits a duality transform by means of which one defines the dual norm  $\|\vec x\|^* := \sup\{\langle \vec y, \vec x\rangle :\|\vec y\|\leq 1\}$. The dual norm inherits many properties from the original norm $\|\cdot\|$ and moving from one norm to the other can be of help in many applications. For a review of properties, we refer to 
\cite{Boyd,Clarke,Rockafellar,Yosida,Zeidler}. 
A similar dual operator $\D$ can be defined for general nonnegative one-homogeneous convex functions in $\CH_1(\R^n)$, as we discuss below. Our main result 
shows that the considered norm-like duality transform preserves the eigenpairs of any nonnegative homogeneous function pair in $\CH_1(\R^n)$, as well as the corresponding multiplicities, and their variational eigenvalues.

On $\CH_1(\R^n)$, consider the dual operator   $\D:\CH_1(\R^n)\to \CH_1(\R^n)$  defined by 
$$
\D \f(\vec x):=\sup \Big\{ \langle \vec y, \vec x\rangle : \f(\vec y)\leq 1  \text{ and } \vec y\bot \Ker(\f) \Big\}
$$ 
for any $\f\in \CH_1(\R^n)$. It is worth noting that  one should be careful with the notation above, as $\D \f(x)$ denotes  the dual of $f$ at  $\vec x$, which  implicitly depends on the variable $\vec x$ itself. 

Note that this dual operator is essentially a composition of the ``standard'' norm dual operator $\f^*(\vec x) = \sup\{\langle \vec y, \vec x\rangle : \f(\vec y)\leq 1\}$  and a projection onto the orthogonal complement of $\Ker(f)$. In other words, if $P$ denotes the orthogonal projection onto $\Ker(f)$, then it is easy to see that it holds 
\begin{equation}\label{eq:norm-dual-identity}
\D \f(\vec x) = \f^*(\vec x - P\vec x)\, .    
\end{equation}
We use the ``modified'' dual $\D f$ instead of the standard norm dual $f^*$ because we want to work on the function space $\CH_1(\R^n)$ and we want  $\CH_1(\R^n)$  to be closed under the  dual operation. However,  $f^*(\vec x)=+\infty$ for $\vec x\not\in (\Ker f)^\bot$ and $f\in\CH_1(\R^n)$. 

A number of useful properties follow directly from the above definition of $\D$, we discuss some of them in the following.

\begin{proposition}\label{pro:basic-D}
For any $\f\in \CH_1(\R^n)$ it holds  
 $\Ker(f)=\Ker(\D f)$, 
 $\D\D \f=\f$ and, in particular, $\D \f \in \CH_1(\R^n)$. 
 \end{proposition}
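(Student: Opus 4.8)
The plan is to reduce everything to the behaviour of $f$ on the subspace $V:=\Ker(f)^\bot$, on which $f$ is positive definite, and then invoke the classical polarity theory for convex bodies. Write $K:=\Ker(f)$ and let $P$ be the orthogonal projection onto $K$, so that $I-P$ is the projection onto $V$. Every $\vec y$ feasible in the definition of $\D f$ satisfies $\vec y\bot K$, hence $\langle\vec y,\vec x\rangle=\langle\vec y,(I-P)\vec x\rangle$ and therefore $\D f(\vec x)=\D f((I-P)\vec x)$; likewise, by the property established right after Definition~\ref{def:CH_p}, $f(\vec x)=f((I-P)\vec x)$. Moreover $\Ker(f)\cap V=\{0\}$, so $f(\vec v)>0$ for every $\vec v\in V\setminus\{0\}$.

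First I would verify that $\D f\in\CH_1(\R^n)$. Nonnegativity is immediate ($\vec y=0$ is feasible), positive one-homogeneity follows by extracting the scalar from the linear functional, and convexity holds because $\D f$ is a supremum of linear functions. For finiteness one needs the feasible set $\hat B:=\{\vec y\in V:f(\vec y)\le1\}$ to be bounded: $f$ is continuous (a finite convex function on $\R^n$) and, by the previous paragraph, strictly positive on the compact sphere $\{\vec y\in V:|\vec y|=1\}$, hence bounded below there by some $m>0$; thus $f(\vec y)\ge m|\vec y|$ on $V$ and $\hat B\subseteq\{\,|\vec y|\le 1/m\,\}$, so $\D f(\vec x)=\sup_{\vec y\in\hat B}\langle\vec y,\vec x\rangle<\infty$. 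For the kernel, $K\subseteq\Ker(\D f)$ since $\langle\vec y,\vec x\rangle=0$ for all feasible $\vec y$ when $\vec x\in K$; conversely, if $\D f(\vec x)=0$ and $\vec v:=(I-P)\vec x\neq0$, then $\vec y_0:=\vec v/f(\vec v)$ is feasible and $\langle\vec y_0,\vec v\rangle=|\vec v|^2/f(\vec v)>0$ contradicts $\D f(\vec x)=\D f(\vec v)=0$. Hence $\Ker(\D f)=K$ is a linear subspace and $\D f\in\CH_1(\R^n)$; applying this with $\D f$ in place of $f$ also gives $\D\D f\in\CH_1(\R^n)$.

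It remains to show $\D\D f=f$. Here I would identify $\hat f:=f|_V$ with the gauge (Minkowski functional) $\gamma_{\hat B}$ of the set $\hat B$, which by the above is a convex body in $V$ containing $0$ in its interior relative to $V$ (the interior point because $f$ is continuous with $f(0)=0<1$). Then $\D f$ restricted to $V$ equals the support function $h_{\hat B}$, i.e.\ the gauge $\gamma_{\hat B^{\circ}}$ of the polar body $\hat B^{\circ}:=\{\vec z\in V:\langle\vec y,\vec z\rangle\le1\ \text{for all}\ \vec y\in\hat B\}$ (see \cite{Rockafellar}). Since $\hat B^{\circ}$ is again a convex body with $0$ in its relative interior, the same computation shows $\D\D f$ restricted to $V$ equals $\gamma_{\hat B^{\circ\circ}}$; and by the bipolar theorem $\hat B^{\circ\circ}=\hat B$, so $(\D\D f)|_V=\gamma_{\hat B}=f|_V$. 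Combining with $\D\D f(\vec x)=\D\D f((I-P)\vec x)$ and $f(\vec x)=f((I-P)\vec x)$ yields $\D\D f=f$ on all of $\R^n$.

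The only steps that are genuinely routine are the homogeneity/convexity bookkeeping and the boundedness estimate; the conceptual content is the reduction to $V$ combined with the bipolar theorem. The point that needs care is that $f$ is only \emph{positively} one-homogeneous, so $\hat f$ is in general an asymmetric gauge and $\hat B$ need not be centrally symmetric — but since $\hat B$ is still a bona fide convex body with $0$ in its relative interior, the gauge--polar correspondence and the bipolar identity hold verbatim. The other thing to keep track of is that the projection onto $\Ker(f)^\bot$ built into the modified dual $\D$ is consistent with all of this, which is precisely the role of the reduction in the first paragraph.
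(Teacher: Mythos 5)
Your proof is correct, and it takes a genuinely different route from the paper's. The paper's argument for $\D\D f = f$ is terse: it writes $\D f(\vec x) = f^*(\vec x - P\vec x)$ for the unmodified norm-dual operator $f^*$, applies the identity twice, and then cites the classical biconjugacy fact $f^{**} = f$ directly. Your argument instead decomposes $\R^n = \Ker(f) \oplus V$, restricts everything to $V$ where $f$ is positive definite, identifies $f|_V$ with a gauge $\gamma_{\hat B}$ and $\D f|_V$ with the support function $h_{\hat B} = \gamma_{\hat B^\circ}$, and concludes via the bipolar theorem $\hat B^{\circ\circ} = \hat B$. The two routes are of course two faces of the same classical correspondence, but yours is more self-contained: it makes explicit both the reduction to $V$ (which is exactly what the projection built into $\D$ is doing) and the boundedness estimate $f(\vec y)\ge m|\vec y|$ on $V$ that guarantees finiteness of $\D f$ — a point the paper leaves implicit. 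You also flag a genuine subtlety that the paper does not mention, namely that $f$ is only \emph{positively} one-homogeneous, so $\hat B$ need not be centrally symmetric; this is harmless because the gauge--polar correspondence and the bipolar theorem hold for general convex bodies containing the origin in the relative interior, but it is worth having stated. The one thing the paper does a bit more cleanly is the kernel identity $\Ker(f) = \Ker(\D f)$, which it dispatches with a one-line equivalence; your version via the feasible point $\vec y_0 = \vec v / f(\vec v)$ is equivalent and equally valid.
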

\begin{proof}
Clearly, $\vec x\not\in \f^{-1}(\vec 0)$ if and only if there exists $\vec y\bot \f^{-1}(\vec 0)$ such that $\langle\vec y,\vec x\rangle>0$. This means that $\f(\vec x)>0\Leftrightarrow \D \f(\vec x)>0$ for any given $\vec x$, which implies  $\f^{-1}(\vec 0)=(\D \f)^{-1}( 0)$. 

By \eqref{eq:norm-dual-identity}, $\D\D \f(\vec x)=\D \f^*(\vec x - P\vec x)=\f^{**}(\vec x - P\vec x-P(\vec x - P\vec x))=\f^{**}(\vec x - P\vec x)=\f(\vec x - P\vec x)=\f(\vec x)$ for any $\vec x\in\R^n$, where we used the well-known identity $\f^{**}=\f$. So, $\D\D \f=\f$. 

For any $\vec z\in (\D \f)^{-1}( 0)=\f^{-1}( 0)$, $P\vec z=\vec z$, and  $\D \f(\vec x+\vec z)=\f^*(\vec x+\vec z - P(\vec x+\vec z))=\f^*(\vec x - P\vec x)=\D \f(\vec x)$. Therefore, $\D \f \in \CH(\R^n)$. 
\qed
\qed\end{proof}

\begin{proposition}
Let $S\subseteq\R^n$ be a bounded set and let $\mathrm{conv}(S)$ be its convex hull. Suppose $\vec 0$ is in the relative  interior of $\mathrm{conv}(S)$, and consider the support function   $\f_S(\vec x):=\sup_{\vec v\in S}\vec x^\top \vec v$.  Then 
$$
\D \f_S(\vec x)=\inf\Big\{\sum_i \alpha_i: f_S\Big(\sum \alpha_i \vec v_i-\vec x\Big)=0\text{ where }\alpha_i\geq0,\vec v_i\in S\Big\}.
$$
 \end{proposition}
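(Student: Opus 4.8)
The plan is to show that both sides equal the gauge (Minkowski functional) of $\mathrm{conv}(S)$, evaluated at the orthogonal projection of $\vec x$ onto $L:=\mathrm{span}(S)$. First I would record the geometry: since $\vec 0$ lies in the relative interior of $\mathrm{conv}(S)$, in particular $\vec 0\in\mathrm{conv}(S)$, the affine hull of $\mathrm{conv}(S)$ is the linear subspace $L$, and $\vec 0$ is an interior point of $\mathrm{conv}(S)$ viewed as a convex body inside $L$. A short argument then gives $\Ker(f_S)=L^\bot$: the inclusion $\supseteq$ is clear, while $f_S(\vec x)=0$ forces $\vec x^\top\vec v\le 0$ for all $\vec v\in S$ and hence, evaluating against a small positive multiple of the $L$-component $P_L\vec x$ of $\vec x$ (which lies in $\mathrm{conv}(S)$ since $\vec 0$ is interior), forces $P_L\vec x=\vec 0$. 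Thus $f_S\in\CH_1(\R^n)$, $\D f_S$ is well defined, and the projection $P$ onto $\Ker(f_S)$ satisfies $\vec x-P\vec x=P_L\vec x$.

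Second, I would rewrite the left-hand side. The sublevel set $\{\vec y:f_S(\vec y)\le 1\}$ equals $\{\vec y:\vec y^\top\vec v\le 1\ \text{for all }\vec v\in S\}=(\mathrm{conv}(S))^\circ$, the polar body, and the side constraint $\vec y\bot\Ker(f_S)$ means $\vec y\in L$; hence
$$
\D f_S(\vec x)=\sup\{\langle\vec y,\vec x\rangle:\vec y\in(\mathrm{conv}(S))^\circ\cap L\}.
$$
Because $\langle\vec y,\vec x\rangle=\langle\vec y,P_L\vec x\rangle$ for $\vec y\in L$, the right-hand side is the support function, computed inside $L$, of the polar of the convex body $\mathrm{conv}(S)\subseteq L$, evaluated at $P_L\vec x$. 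By the classical duality between the support function of a polar body and the gauge of the body, this equals $\gamma_{\mathrm{conv}(S)}(P_L\vec x)$, where $\gamma_{\mathrm{conv}(S)}(\vec w):=\inf\{\lambda>0:\vec w\in\lambda\,\mathrm{conv}(S)\}$; this gauge is finite on all of $L$ and (again because $\vec 0$ is interior) is unchanged upon replacing $\mathrm{conv}(S)$ by its closure, so closedness plays no role.

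Third, I would identify the right-hand side with the same gauge. Since $f_S(\vec w)=0$ exactly when $\vec w\in L^\bot$, and every $\vec v_i\in S\subseteq L$, the constraint $f_S(\sum_i\alpha_i\vec v_i-\vec x)=0$ is equivalent to $\sum_i\alpha_i\vec v_i=P_L\vec x$. For $\lambda>0$ the points $\sum_i\alpha_i\vec v_i$ with $\alpha_i\ge0$, $\sum_i\alpha_i=\lambda$, $\vec v_i\in S$ are exactly those of $\lambda\,\mathrm{conv}(S)$; hence the right-hand infimum equals $\inf\{\lambda>0:P_L\vec x\in\lambda\,\mathrm{conv}(S)\}=\gamma_{\mathrm{conv}(S)}(P_L\vec x)$, with the case $P_L\vec x=\vec 0$ (empty representation, value $0=\gamma_{\mathrm{conv}(S)}(\vec 0)$) handled separately. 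Comparison with the second step finishes the proof.

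The only real obstacle is the bookkeeping forced by the affine hull: in $\R^n$ the body $\mathrm{conv}(S)$ has empty interior unless $L=\R^n$, so the support-function/gauge duality must be applied inside $L$ and the projection $P_L$ inserted in the right places. Once the identifications $\Ker(f_S)=L^\bot$ and $\{\vec y:f_S(\vec y)\le 1\}=(\mathrm{conv}(S))^\circ$ are in hand, what remains is the standard polar--gauge correspondence together with the elementary description of $\lambda\,\mathrm{conv}(S)$ as the set of finite nonnegative combinations $\sum_i\alpha_i\vec v_i$ with $\sum_i\alpha_i=\lambda$.
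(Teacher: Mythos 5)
Your proof is correct, and it takes a genuinely different route from the paper's. The paper never identifies either side with a gauge function: instead it introduces the auxiliary class $\mathcal{F}=\{\f'\in \CH_1(\R^n):\f'^{-1}(0)=\f_S^{-1}(0),\ \f'(\vec v)\le 1\ \forall\vec v\in S\}$ and shows, via the involutivity $\D\D=\mathrm{id}$ from Proposition~\ref{pro:basic-D} and the order reversal of $\D$ on functions with a common kernel, that $\D\f_S$ is the \emph{largest} element of $\mathcal{F}$; it then checks directly that the right-hand side $\widetilde{\f}$ also lies in $\mathcal{F}$ and dominates every $\f'\in\mathcal{F}$ (by $\f'(\vec x)\le\sum_i\alpha_i\f'(\vec v_i)\le\sum_i\alpha_i$), hence both equal the unique maximal element. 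You instead compute both sides explicitly as $\gamma_{\overline{\conv(S)}}(P_L\vec x)$, using $\Ker(\f_S)=L^\bot$, the identification $\{\f_S\le 1\}=(\conv S)^\circ$, and the polar--gauge correspondence restricted to the affine hull $L$. The paper's argument is more self-contained within the machinery it has already set up (it reuses $\D\D=\mathrm{id}$ and needs no external convex-geometry facts), while your argument is more concrete and actually reveals what $\D\f_S$ \emph{is} — the Minkowski functional of the closed convex hull composed with projection onto $\mathrm{span}(S)$ — which the paper's maximality argument does not make explicit. One small point worth flagging cleanly in a write-up: the polar $(\conv S)^\circ$ depends only on $\overline{\conv(S)}$, so the support-function/gauge duality delivers $\gamma_{\overline{\conv(S)}}$ rather than $\gamma_{\conv(S)}$; you correctly note these agree because $\vec 0$ is in the relative interior (the open segment from an interior point to any point of the closure lies in the interior), but this is exactly where the relative-interior hypothesis is doing real work and deserves a sentence of its own.
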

 \begin{proof}
 Note that if $\f(\vec y)\le 1$ and $\vec v\in S$, then $\langle\vec y,\vec v\rangle\le1$. Hence, $\D \f(\vec v)\le 1$, $\forall\vec v\in S$. Let
 $$\mathcal{F}=\{\f'\in CH^+(\R^n):\f'^{-1}(0)=\f^{-1}(0)\text{ and }\f'(\vec v)\le 1,\,\forall\vec v\in S\}.$$
Then, by Proposition \ref{pro:basic-D}, we obtain  $\D \f\in \mathcal{F}$. 
 For any $\f'\in \mathcal{F}$, it is clear that  $S\subset (\f^{-1}(0))^\bot=(\f'^{-1}(0))^\bot=(\D \f')^{-1}(0)^\bot$, and thus  $$\f(\vec y)=\sup\limits_{\vec v\in S}\langle\vec y,\vec v\rangle\le \sup\limits_{\vec v\bot \f'^{-1}(0):\f'(\vec v)\le 1}\langle\vec y,\vec v\rangle=\D \f'(\vec y)$$
 which implies  $\D \f\ge  \f'$. That is, $\D \f$ is the largest function in $\mathcal{F}$. 
 
Consider the function $\widetilde{\f}:\vec x\mapsto \inf\{\sum \alpha_i:\f(\sum \alpha_i \vec v_i-\vec x)=0\text{ for some }\alpha_i\ge0,\vec v_i\in S\}$. Clearly, $\widetilde{\f}\in CH^+(\R^n)$,   $\widetilde{\f}(\vec v)\le1$, $\forall\vec v\in S$, and $\widetilde{\f}^{-1}(0)=\f^{-1}(0)$. That is, $\widetilde{\f}\in\mathcal{F}$.  

 For any $\f'\in \mathcal{F}$,  
 $\f'(\vec x)\le \sum \alpha_i\f'(\vec v_i)\le \sum \alpha_i$ whenever $\vec x-\sum \alpha_i\vec v_i\in \f^{-1}(0)$.  Taking the infimum, we get $\f'(\vec x)\le \widetilde{\f}(\vec x)$. In consequence,  we have proved that $\widetilde{\f}$ is also the largest function in $\mathcal{F}$. The proof of $\D \f=\widetilde{\f}$ is then  completed.
 \qed\end{proof}
 Let $f_S$ be defined as in the proposition above. Clearly one has $f_S(\vec x)= \sup_{v\in \mathrm{conv}(S)}\langle\vec x,\vec v\rangle$, thus we may assume without loss of generality that $S$ is convex. In that case, if we assume $S$ centrally symmetric, then $f_S$ defines a semi-norm and 
 $$
 \D \f_S(\vec x)=\inf\left\{\sum |\alpha_i|:\vec x-\sum \alpha_i \vec v_i\bot  \mathrm{span}(S)\text{ where }\vec v_i\in S\right\}\, .
 $$
 In addition, given a norm $\|\cdot\|$ and a  subset $S\subset \{\vec v:\|\vec v\|=1\}$ with $ \mathrm{conv}((-S)\cup S)=\{\vec v:\|\vec v\|\le 1\}$, we have 
$\|\vec x\|=\inf\left\{\sum |\alpha_i|:\sum \alpha_i \vec v_i=\vec x,\vec v_i\in S\right\}$. For example, we can take $S$ as the set of the extreme points of the unit ball $\{\vec v:\|\vec v\|\le1\}$, and this implies the known identity $\|A\|_{l^2\to l^2}=\inf\{\sum|\alpha_i|:A=\sum \alpha_i U_i\text{ with }U_i\text{ unitary}\}$, for a square matrix $A$.

Finally, we remark that, given a  norm $\|\cdot\|$ on $\R^n$ and a  linear subspace $X$ of $\R^n$, the map $\vec x\mapsto \inf\{\|\vec z\| : {z-x\bot X} \}$ defines a semi-norm on $\R^n$. In other terms, 
$
[\vec x]\mapsto \inf\{\|\vec y\|: {y-x\in X}\}
$ 
defines a norm on the quotient space $\R^n/X$ (we refer to Gromov's norm for this basic construction  \cite{Gromov}).

 \subsection{Linear transformation of homogeneous functions}

Given a matrix $A\in \R^{m\times n}$, i.e.\ a linear map from $\R^n$ to $\R^m$, let  $\M_A:\CH_1(\R^n)\to \CH_1(\R^m)$ be defined as 
$$
\M_A \f(\vec x):= \f(A^\top\vec x),\;\;\forall \f\in \CH_1(\R^n),\,\forall\vec x\in\R^m
$$
where $A^\top$ denotes the transpose of $A$. Let $P_A$ denote the orthogonal projection onto $\Ran A$. As $\R^m = \Ran A \oplus \Ker{A^\top}$ we can uniquely define the operator 
$\PA_A:\CH_1(\R^n)\to \CH_1(\R^m)$ as the composition of the  so-called {infimal postcomposition} $A\triangleright f$ (see e.g.~\cite{BauschkeCombettes}) and the orthogonal projection $P_A$. Precisely, we set
$$\PA_A \f(\vec x):=A\triangleright f(P_A\vec x)$$
where 
$$
A\triangleright f (\vec x) =  \inf_{\vec y : A\vec y=\vec x} f(\vec y)\, .
$$
We use this  slightly modified version of the infimal postcomposition because $A\triangleright f(\vec x)=+\infty$ for $\vec x\not\in\Ran A$.

\begin{proposition}\label{thm:double-dual}
Given $\f\in \CH_1(\R^n)$, if $\Ker\f  \subseteq \Ker A$, then $\PA_A = \D \M_A\D$. In particular, if $f$ is positive  (i.e.\ $\f(\vec x)>0$ whenever $\vec x\neq \vec 0$) then $\PA_A = \D \M_A\D$ holds for any matrix $A$.
\end{proposition}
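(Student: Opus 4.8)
The plan is to recast all three operators in terms of support functions and gauges of convex bodies, after which the identity becomes essentially a bookkeeping statement. For $h\in\CH_1(\R^k)$ write $N_h:=\Ker h$ and $S_h:=\{\vec y\in N_h^\perp:h(\vec y)\le 1\}$. Since $h$ is convex, hence continuous, and strictly positive on $N_h^\perp\setminus\{\vec 0\}$ (if $h(\vec y)=0$ with $\vec y\in N_h^\perp$ then $\vec y\in N_h\cap N_h^\perp=\{\vec 0\}$), the set $S_h$ is a convex body in the subspace $N_h^\perp$: compact, convex, with $\vec 0$ in its relative interior. Straight from the definition of $\D$, one has $\D h=\sigma_{S_h}$, the support function of $S_h$. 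Conversely, whenever a function equals the support function $\sigma_K$ of a convex body $K$ contained in a subspace $W\subseteq\R^k$, then $\Ker\sigma_K=W^\perp$ (because $\vec 0\in\mathrm{relint}\,K$) and, by the bipolar theorem applied inside $W$, $\D\sigma_K=\gamma_K\circ P_W$, where $\gamma_K$ is the Minkowski gauge of $K$ within $W$ and $P_W$ the orthogonal projection onto $W$. These two facts about $\D$ are all I would use.

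Next I would compute $\M_A\D f$ for $f\in\CH_1(\R^n)$ with $N:=\Ker f\subseteq\Ker A$, abbreviating $S:=S_f\subseteq N^\perp$. Since $\D f=\sigma_S$ we get $\M_A\D f(\vec x)=\D f(A^\top\vec x)=\sigma_S(A^\top\vec x)=\sigma_{AS}(\vec x)$, where $AS$ is the image of the body $S$ under $A$. The hypothesis $N\subseteq\Ker A$ enters twice here. First, $A$ vanishes on $N$, so $A|_{N^\perp}$ maps $N^\perp$ onto $\Ran A$ and therefore $AS$ is a convex body contained in $\Ran A$. Second, $\M_A\D f(\vec x)=0$ iff $A^\top\vec x\in\Ker\D f=N$ (Proposition~\ref{pro:basic-D}); but $\Ran A^\top=(\Ker A)^\perp\subseteq N^\perp$, so $\Ran A^\top$ meets $N$ only at $\vec 0$, forcing $A^\top\vec x=\vec 0$, i.e.\ $\vec x\in(\Ran A)^\perp$. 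Hence $\M_A\D f=\sigma_{AS}$ with $\Ker(\M_A\D f)=(\Ran A)^\perp$, and the reformulation of the first paragraph (applied with $W=\Ran A$) gives $\D\M_A\D f=\gamma_{AS}\circ P_A$.

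It then remains to prove that $\PA_A f=(A\triangleright f)\circ P_A$ equals this, i.e.\ that $A\triangleright f(\vec z)=\gamma_{AS}(\vec z)$ for every $\vec z\in\Ran A$. For ``$\le$'': since $AS$ is compact, $\vec z=A(t_0\vec s)$ for some $\vec s\in S$ with $t_0=\gamma_{AS}(\vec z)$, so $A\triangleright f(\vec z)\le f(t_0\vec s)=t_0f(\vec s)\le t_0$. For ``$\ge$'': given any $\vec y$ with $A\vec y=\vec z$, set $\vec y_0:=\vec y-P\vec y\in N^\perp$, where $P$ projects onto $N$; the remark following Definition~\ref{def:CH_p} yields $f(\vec y_0)=f(\vec y)$, while $A\vec y_0=\vec z$ because $P\vec y\in N\subseteq\Ker A$. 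If $f(\vec y_0)=0$ then $\vec y_0\in N\cap N^\perp=\{\vec 0\}$ and $\vec z=\vec 0$; otherwise $\vec y_0/f(\vec y_0)\in S$, so $\vec z\in f(\vec y_0)\,AS$ and $\gamma_{AS}(\vec z)\le f(\vec y_0)=f(\vec y)$. Taking the infimum over admissible $\vec y$ gives $\gamma_{AS}(\vec z)\le A\triangleright f(\vec z)$. Therefore $\PA_A f=\gamma_{AS}\circ P_A=\D\M_A\D f$. Finally, a positive $f$ has $\Ker f=\{\vec 0\}\subseteq\Ker A$ for every matrix $A$, so the ``in particular'' clause is just the special case $N=\{\vec 0\}$.

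I expect the crux to be the structural facts isolated in the second paragraph — that $\Ker(\M_A\D f)=(\Ran A)^\perp$ and that $AS$ is a genuine convex body in $\Ran A$ — since these are precisely where $\Ker f\subseteq\Ker A$ is indispensable (without it $\D\M_A\D f$ need not even belong to $\CH_1(\R^m)$) and they are what let the modified duals $\D,\M_A,\PA_A$ play the same roles their unmodified counterparts play in classical convex geometry. Once they are in place, matching $A\triangleright f$ with the gauge $\gamma_{AS}$ is routine; the only point needing a word is attainment of the infimum in $\gamma_{AS}(\vec z)$, which follows from compactness of $AS$, equivalently from the positivity of $f$ off its kernel guaranteed by Definition~\ref{def:CH_p}.
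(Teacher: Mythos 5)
Your proof is correct, and it takes a genuinely different route from the paper's. The paper establishes the equivalent identity $\PA_A\D f=\D\M_A f$ by unrolling both sides into an $\inf$--$\sup$ expression, swapping the two (a bilinear minimax step), and performing an explicit change of variables, after which $f\mapsto\D f$ together with $\D\D f=f$ gives the stated form. You instead proceed geometrically: you identify $\D h$ with the support function $\sigma_{S_h}$ of the sub-level body $S_h=\{\vec y\in(\Ker h)^\perp:h(\vec y)\le1\}$, show that $\M_A\D f=\sigma_{AS}$ for $S=S_f$, note that $\Ker f\subseteq\Ker A$ makes $AS$ a genuine convex body in $\Ran A$ with $\Ker(\M_A\D f)=(\Ran A)^\perp$, invoke polar duality inside $\Ran A$ to get $\D\M_A\D f=\gamma_{AS}\circ P_A$, and finish by checking directly that $A\triangleright f$ coincides with the gauge $\gamma_{AS}$ on $\Ran A$. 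The two approaches trade off differently: the paper's computation is shorter but leans on a minimax exchange whose validity rests on exactly the compactness you make explicit through $S_h$, whereas your argument replaces the swap by the bipolar theorem and makes visible precisely where the hypothesis $\Ker f\subseteq\Ker A$ enters (via $\Ran A^\top\subseteq(\Ker f)^\perp$ and $A$ vanishing on $\Ker f$). Both proofs are sound; yours is closer in spirit to the convex-geometric reading of $\D$, $\M_A$, $\PA_A$ that the paper only sketches in a remark.
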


\begin{proof}
Keeping the assumption $\f^{-1}(0)\subset\mathrm{Ker}(A)$ in mind, we have
\begin{align*}
\PA_A\D \f(\vec x)&=\inf\limits_{\vec y\in A^{-1}(\vec x)}\sup\limits_{\vec u\bot \f^{-1}(0):\f(\vec u)\le 1}\langle \vec y,\vec u\rangle = \sup\limits_{\vec u\bot \f^{-1}(0):\f(\vec u)\le 1}\inf\limits_{\vec y\in A^{-1}(\vec x)}\langle \vec y,\vec u\rangle \\
&=\sup\limits_{\vec u\bot \mathrm{Ker}(A):f(\vec u)\le 1}\langle \vec y,\vec u\rangle =\sup\limits_{\vec v:\f(A^\top\vec v)\le 1}\langle \vec y,A^\top\vec v\rangle  \\
&=\sup\limits_{\vec v\bot\mathrm{Ker}(A^\top):\f(A^\top\vec v)\le 1}\langle A\vec y,\vec v\rangle =\sup\limits_{\vec v\bot (\f\circ A^\top)^{-1}(0):\f(A^\top\vec v)\le 1}\langle \vec x,\vec v\rangle 
\\&=\D \M_A \f (\vec x) .
\end{align*}
In the above equalities, we should note that the condition $\f^{-1}(0)\subset\mathrm{Ker}(A)$ implies  $\mathrm{Ker}(A^\top)=(\f\circ A^\top)^{-1}(0)$. In fact, $A^\top\vec z=0$ $\Rightarrow$ $\f(A^\top\vec z)=0$ $\Rightarrow$ $AA^\top\vec z=0$  $\Rightarrow$ $A^\top\vec z=0$  which means $A^\top\vec z=0$ $\Leftrightarrow$ $\f(A^\top\vec z)=0$. Then, the second equality from below is proved.

Replacing $\f$ by $\D \f$, we have $\PA_A \f=\PA_A \D \D \f=\D \M_A \D \f$.
\qed\end{proof}

Before moving on, we collect in the next remark an interesting geometric interpretation of $\D$, $\M_A$ and $\PA_A$.
\begin{remark}
Consider a convex body $K$ in $\R^n$, it is well-known that the Minkowski functional of $K$ equals the support function of its dual convex body $K^\circ$. The dual operator transforms the Minkowski functional of $K$ to its support function, while $\PA_A$ maps  the Minkowski functional of $K$ to the Minkowski functional of $A(K)\times \mathrm{Ker}(A)$, and  $\M_A$ maps  the support function of $K$ to the support function of $A(K)$. If $A$ is further assumed to be a projection, then $\M_A$ maps  the Minkowski functional of $K$ to the   Minkowski functional of $K\cap \mathrm{Ker}(A)^\bot$, while $\PA_A$ transforms   the support function of $K$ to the support function of $K\cap \mathrm{Ker}(A)^\bot$.
\end{remark}

Note that, as a consequence of Proposition  \ref{thm:double-dual},  if $\f\in\CH_1(\R^n)$ is positive,  $n=m$ and $A$ is an invertible matrix, we have $\D \M_A \D \f(\vec x)=\f(A^{-1}\vec x)$, and therefore, $\D \M_A \D \f(\vec x) =  \M_A \f(\vec x)$  whenever $A$ is an orthogonal matrix. 
Moreover, for a general $\f\in\CH_1(\R^n)$, we have the identities $\M_A\D \f=\D \PA_A \f$ and $\D \M_A \f=\PA_A \D \f$. The equality $\M_A\D \f=\D \PA_A\f$   means that  ``the section of the dual equals the dual of the projection'', which is a useful observation with direct implications in convex geometry. 
On the other hand, the equality $\D\M_A \f=\PA_A\D \f$ has a similar geometrical meaning, and it has an interesting additional consequence, which we summarize in the following proposition.

\begin{proposition}\label{pro:compo-norm}
Let $\|\cdot\|$ be a monotonic norm on $\R^d$, i.e., $\|(t_1,\cdots,t_d)\|=\|(|t_1|,\cdots,|t_d|)\|$ for any $(t_1,\cdots,t_d)\in\R^d$. Let $\g_i\in \CH_1(\R^{n_i})$ be positive-definite,  and let $A_i:\R^n\to \R^{n_i}$ be a linear map, i.e., $A_i\in\R^{n_i\times n}$,  $i=1,\cdots,d$. Denote by $\hat{\g}(\vec x)=\|(\g_1(A_1\vec x),\cdots,\g_d(A_d\vec x))\|$. Then 
$$\D\hat{\g}(\vec x)=\inf\limits_{\sum_{i=1}^dA_i^\top \vec x_i=\vec x}\|(\D \g_1 (\vec x_1),\cdots,\D \g_d(\vec x_d))\|_*,$$
where $\|\cdot\|_*$ is the dual norm induced by $\|\cdot\|$.
\end{proposition}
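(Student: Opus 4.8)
The plan is to realize $\hat\g$ as an $\M$-transform of a single ``stacked'' gauge on a product space, and then feed this into the identity $\D\M_B=\PA_B\D$ coming from Proposition~\ref{thm:double-dual}. Put $N=n_1+\cdots+n_d$, identify $\R^N\cong\R^{n_1}\times\cdots\times\R^{n_d}$, and define $\g\colon\R^N\to\R$ by $\g(\vec y_1,\dots,\vec y_d):=\|(\g_1(\vec y_1),\dots,\g_d(\vec y_d))\|$. First I would check that $\g\in\CH_1(\R^N)$ and is in fact positive-definite: positive one-homogeneity and nonnegativity are immediate; convexity follows from monotonicity of $\|\cdot\|$ together with convexity and nonnegativity of the $\g_i$ (for two points, the vector $(\g_i(\cdot))_i$ of values at a convex combination is, componentwise, $\le$ the convex combination of the two value vectors, which lie in $\R^d_{\ge0}$, so monotonicity and convexity of $\|\cdot\|$ conclude); and $\g(\vec y)=0$ forces $\g_i(\vec y_i)=0$, hence $\vec y_i=\vec0$, for every $i$. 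Then let $B\colon\R^N\to\R^n$ be the linear map $B(\vec y_1,\dots,\vec y_d):=\sum_{i=1}^dA_i^\top\vec y_i$, so that $B^\top\vec x=(A_1\vec x,\dots,A_d\vec x)$ and therefore $\hat\g=\M_B\g$.

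The one computational ingredient needed is the dual of a stacked gauge:
$$\D\g(\vec y_1,\dots,\vec y_d)=\|(\D\g_1(\vec y_1),\dots,\D\g_d(\vec y_d))\|_*.$$
Since $\g$ and each $\g_i$ are positive-definite, $\D\g=\g^*$ and $\D\g_i=\g_i^*$ coincide with the ordinary convex dual (each $\g_i^*$ being finite-valued, as $\g_i$ is bounded below by a positive constant on the unit sphere). I would compute $\g^*(\vec y)$ by stratifying $\{\vec u:\g(\vec u)\le1\}$ as the union over $t\in\R^d_{\ge0}$ with $\|t\|\le1$ of the sets $\{\vec u:\g_i(\vec u_i)\le t_i\ \forall i\}$: the inclusion ``$\subseteq$'' takes $t_i=\g_i(\vec u_i)$, and the reverse uses monotonicity of $\|\cdot\|$. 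For fixed $t$, positive one-homogeneity of $\g_i$ gives $\sup_{\g_i(\vec u_i)\le t_i}\langle\vec y_i,\vec u_i\rangle=t_i\,\g_i^*(\vec y_i)$ (also when $t_i=0$, since then $\vec u_i=\vec0$), whence $\g^*(\vec y)=\sup\{\sum_i t_i\,\g_i^*(\vec y_i):t\ge0,\ \|t\|\le1\}$. Since $\g_i^*(\vec y_i)\ge0$ and $\|\cdot\|$ is monotonic, passing to componentwise absolute values shows this equals the same supremum over all $\|t\|\le1$, that is, $\|(\g_1^*(\vec y_1),\dots,\g_d^*(\vec y_d))\|_*$.

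To conclude, since $\Ker\g=\{\vec0\}\subseteq\Ker B$, Proposition~\ref{thm:double-dual} applied to $\D\g$ together with $\D\D=\mathrm{id}$ (Proposition~\ref{pro:basic-D}) gives $\PA_B\D\g=\D\M_B\D(\D\g)=\D\M_B\g=\D\hat\g$. Unfolding the definition of $\PA_B$ and invoking the lemma,
$$\D\hat\g(\vec x)=\bigl(B\triangleright\D\g\bigr)(P_B\vec x)=\inf_{(\vec y_1,\dots,\vec y_d):\ \sum_iA_i^\top\vec y_i=P_B\vec x}\ \|(\D\g_1(\vec y_1),\dots,\D\g_d(\vec y_d))\|_*,$$
where $P_B$ is the orthogonal projection onto $\Ran B=\sum_i\Ran A_i^\top$. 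On $\Ran B$ one has $P_B\vec x=\vec x$, which is the asserted identity; off $\Ran B$ the constraint set on the right is empty, but there $\D\hat\g(\vec x)=\D\hat\g(P_B\vec x)$ because $\CH_1$-functions are constant along their kernel and $\Ker\hat\g=(\Ran B)^\perp=\bigcap_i\Ker A_i$, so nothing essential is lost; in particular $P_B$ disappears entirely once the $A_i^\top$ are jointly surjective, equivalently once $\hat\g$ is positive-definite. I expect the only genuine work to be the stacked-gauge dual computation --- specifically the reduction to the value vectors $t=(\g_i(\vec u_i))_i$ and the sign-removal step via monotonicity --- everything else being bookkeeping on top of Propositions~\ref{pro:basic-D} and~\ref{thm:double-dual}.
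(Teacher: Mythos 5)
Your proposal is correct and follows essentially the same route as the paper's proof: you introduce the stacked gauge $\g(\vec y_1,\dots,\vec y_d)=\|(\g_1(\vec y_1),\dots,\g_d(\vec y_d))\|$ on $\R^{n_1+\cdots+n_d}$ (the paper calls it $\tilde\g$), show $\D\g(\vec y_1,\dots,\vec y_d)=\|(\D\g_1(\vec y_1),\dots,\D\g_d(\vec y_d))\|_*$, realize $\hat\g=\M_B\g$ with $B=[A_1^\top\ \cdots\ A_d^\top]$, and conclude via $\D\M_B=\PA_B\D$ from Proposition~\ref{thm:double-dual}. The only stylistic difference is in the stacked-dual computation (you stratify the sublevel set $\{\g\le1\}$ by the value vector $t$, whereas the paper runs the chain of sup-identities directly from the dual-norm formula), and you are a bit more explicit in verifying $\g\in\CH_1$ and in flagging what happens when $\vec x\notin\sum_i\Ran A_i^\top$ --- a case where the displayed formula, read literally, gives $+\infty$ while $\D\hat\g(\vec x)$ is finite; your observation that the identity then holds after projecting onto $\Ran B$ (and exactly as stated when the $A_i^\top$ are jointly surjective) is a reasonable reading of the paper's intent.
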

Note that, by letting $\g_1,\cdots,\g_d$ be norms, we immediately obtain Theorem 6 in \cite{GHT20}, which has implications in the design of converging iterations for general matrix norm computations.
\begin{proof}
Let $\tilde{\g}(\vec x_1,\cdots,\vec x_d)=\|(\g_1(\vec x_1),\cdots,\g_d(\vec x_d))\|$, $\forall(\vec x_1,\cdots,\vec x_d)\in\R^{n_1}\times\cdots\times\R^{n_d}$. Then,
\begin{align*}
\| (\D \g_1 (\vec x_1),\cdots,\D \g_d (\vec x_d))\|&=\sup\limits_{\|(t_1,\cdots,t_d)\|\le 1}\sum_{i=1}^d t_i\D \g_i (\vec x_i)
\\&=\sup\limits_{\|(|t_1|,\cdots,|t_d|)\|\le 1}\sum_{i=1}^d |t_i|\sup\limits_{\g_i(\vec y_i)\le 1}\langle\vec x_i,\vec y_i\rangle
\\&=\sup\limits_{\|(|t_1|,\cdots,|t_d|)\|\le 1}\sum_{i=1}^d \sup\limits_{\g_i(\vec y_i)\le |t_i|}\langle\vec x_i,\vec y_i\rangle
\\&=\sup\limits_{\|(|t_1|,\cdots,|t_d|)\|\le 1} \sup\limits_{\g_i(\vec y_i)\le |t_i|,\forall i}\sum_{i=1}^d\langle\vec x_i,\vec y_i\rangle
\\&=\sup\limits_{\|(\g_1(\vec y_1),\cdots,\g_d(\vec y_d))\|\le 1} \sum_{i=1}^d\langle\vec x_i,\vec y_i\rangle
\\&=\sup\limits_{\tilde{\g} (\vec y_1,\cdots,\vec y_d)\le 1} \langle(\vec x_1,\cdots,\vec x_d),\vec (\vec y_1,\cdots,\vec y_d)\rangle\\&=\D \tilde{\g} (\vec x_1,\cdots,\vec x_d).
\end{align*}
Note that $\hat{\g}(\vec x)=\tilde{\g}(A^\top\vec x)$, where $A:=[A_1^\top,\cdots,A_d^\top]\in \R^{n\times(n_1+\cdots+n_d)}$. 
The proof is then completed by the identity $\D\hat{\g}=\D\M_A \tilde{\g}=\PA_A\D \tilde{\g}$.
\qed\end{proof}

\section{Main results: spectral invariance for norm-like duality}\label{sec:main}
We state here our main theorem showing that nonzero eigenvalues of function pairs, as well as their  multiplicities and their variational eigenvalues \eqref{eq:variational_eigs}, are invariant under the norm-like duality and suitable combinations of $\mathcal M_A$ and $\mathcal P_A$, for any matrix $A$. The relatively long proofs of this theorem and its main corollary cover the entire section.

Throughout the remainder of this paper, the `eigenspace' of $\lambda$ with respect to the function pair $(f,g)$ is the set $S_\lambda(f,g)$ defined by 
\[
S_\lambda(f,g) = \big\{x: 0\in \partial f(x) - \lambda \partial g(x)\big\} \, .
\]
Note that when $f$ and $g$ are even functions,  $S_\lambda(f,g)$ is a symmetric set. In this case,  we define  the multiplicity of the eigenvalue $\lambda$ for  $(f,g)$  as
\[
\mathrm{mult}_{f,g}(\lambda) = \mathrm{genus}\big(S_\lambda(f,g)\big)\, .
\]

The following main spectral invariance theorem holds.
 
\begin{theorem}\label{thm:main1}
Let $f,g\in \CH_1(\R^n)$. Then
\begin{itemize}[leftmargin=2em]
\item[P1.]  The nonzero eigenvalues  of  $(\f,\g)$ and  $(\D \g, \D \f)$  coincide. 

\item[P2.] If $f$ and $g$ are even functions, then $\mathrm{mult}_{f,g}(\lambda) = \mathrm{mult}_{\D g, \D f}(\lambda)$, for any nonzero eigenvalue $\lambda$ of $(f,g)$.

\item[P3.] If $f$ and $g$ are even functions, then the variational eigenvalues  of $(\f,\g)$ and $(\D \g,\D \f)$  coincide exactly, up to reordering. Precisely, it holds 
\[
\lambda_k(\f,\g)=\lambda_{k-d_\f+d_\g}(\D \g,\D \f) ,\qquad k=d_\f-d_{\f\g}+1,\cdots,n-d_\g
\]
where $d_{\f\g}:=\dim \f^{-1}(0)\cap \g^{-1}(0)$,  $d_\f:=\dim \f^{-1}(0)$ and $d_\g:=\dim \g^{-1}(0)$.
\end{itemize}
\end{theorem}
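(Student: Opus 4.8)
The plan is to establish a correspondence between eigenpairs of $(f,g)$ and of $(\D g, \D f)$ that passes through the first-order optimality conditions, and then to lift this to a correspondence between symmetric sublevel (or ratio-level) sets that preserves genus. The starting point is the classical duality between a one-homogeneous convex function and its dual, expressed at the level of subdifferentials: for $f \in \CH_1(\R^n)$ restricted to $\Ker(f)^\bot$, one has $\vec y \in \partial f(\vec x)$ (with $\vec y \bot \Ker f$) if and only if $\langle \vec x, \vec y\rangle = f(\vec x) \D f(\vec y)$ together with $\D f(\vec y) \le 1$ when $f(\vec x)\le 1$ — i.e. $\vec x$ and $\vec y$ are a ``dual pair'' for the polar bodies $\{f\le 1\}$ and $\{\D f \le 1\}$. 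Concretely, for $\vec x \bot \Ker f$ with $f(\vec x) > 0$, the subdifferential $\partial f(\vec x)$ consists exactly of those $\vec y\bot \Ker f$ attaining $\langle \vec x,\vec y\rangle = f(\vec x)$ and lying on the dual unit sphere scaled appropriately. The first step, then, is to write out the eigenvalue relation $0 \in \partial f(\vec x) - \lambda\, \partial g(\vec x)$: it says there is $\vec w \in \partial f(\vec x) \cap \lambda\, \partial g(\vec x)$. Using homogeneity and the identity \eqref{eq:norm-dual-identity}, I expect to show that this is symmetric in the exchange $(f,g,\vec x) \leftrightarrow (\D g, \D f, \vec w)$ with the same $\lambda$: namely $0 \in \partial \D g(\vec w) - \lambda\, \partial \D f(\vec w)$, and moreover $\vec x \in \lambda \partial \D g(\vec w) \cap \partial \D f(\vec w)$ up to the projection onto $\Ker(\cdot)^\bot$. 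This will give P1, after checking that nonzero eigenvalues force $\vec x \notin \Ker f \cup \Ker g$ (using nonnegativity and the linear-kernel hypothesis) so that all quantities are genuinely in the regime where the polarity subdifferential correspondence is an honest bijection.

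For P2, the idea is to promote the pointwise correspondence of the previous paragraph to a set-level map. Define $\Phi : S_\lambda(f,g) \to S_\lambda(\D g, \D f)$ by sending $\vec x$ to (the projection onto $\Ker(g)^\bot$, say, of) a suitably normalized element $\vec w = \vec w(\vec x)$ of $\partial f(\vec x)\cap \lambda\partial g(\vec x)$. The subtlety is that $\partial f(\vec x)$ need not be a singleton, so $\Phi$ is a priori set-valued; the remedy is to pick a canonical representative, e.g. normalize so that $g(\text{preimage}) = 1$ on the relevant slice, or better, observe that the whole construction is positively homogeneous and odd (when $f,g$ are even), so it descends to a continuous, odd map between the symmetric sets after we quotient by the kernels on which $f/g$ and $\D g/\D f$ are invariant. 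Since genus is monotone under odd continuous maps and $\Phi$ composed with its inverse $\Psi$ (built the same way from $\D g, \D f$) is, modulo kernels, the identity, I would conclude $\mathrm{genus}(S_\lambda(f,g)) = \mathrm{genus}(S_\lambda(\D g,\D f))$. One must be careful that $S_\lambda$ is automatically closed and symmetric (closedness from upper semicontinuity of the Clarke subdifferential for these Lipschitz functions; symmetry from evenness) so that genus is defined.

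For P3, the natural route is to transfer the minmax formula \eqref{eq:variational_eigs}. On $\R^n \setminus (\text{appropriate kernel})$ the ratio $r_{f,g} = f/g$ and the ratio $r_{\D g,\D f} = \D g/\D f$ should be related by exactly the same duality map: I expect to exhibit an odd homeomorphism $T$ between (a suitable quotient or spherical section of) the domain of $r_{f,g}$ and that of $r_{\D g,\D f}$ such that $r_{\D g,\D f}(T\vec x) = r_{f,g}(\vec x)$ — essentially $T$ is the gradient/duality map $\vec x \mapsto \vec w(\vec x)$ again, now viewed as a change of variables rather than a subdifferential inclusion. Because $T$ is an odd homeomorphism, it preserves genus of symmetric sets, so the two families of minmax values coincide; the index shift $k \mapsto k - d_f + d_g$ and the range $d_f - d_{fg} + 1,\dots, n - d_g$ come from bookkeeping the dimensions of the kernels that get quotiented out on each side (the effective dimension is $n - d_f$ vs.\ $n - d_g$, with the common part $d_{fg}$ accounting for where both ratios degenerate). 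The main obstacle, I anticipate, is precisely this careful kernel bookkeeping together with making the duality map $T$ genuinely single-valued and bi-continuous on the nose: away from smoothness of $f,g$ the map $\vec x \mapsto \partial f(\vec x)$ is only a cusco, so one must either argue that for the purposes of genus it suffices to work with a continuous selection, or pass to the dual sphere where the correspondence $\{f = 1\} \leftrightarrow \{\D f = 1\}$ via outward normals is a genuine homeomorphism of the boundaries of the polar bodies. I would handle this by phrasing everything in terms of the polar convex bodies $K_f = \{f \le 1\} \cap \Ker(f)^\bot$ and $K_f^\circ = \{\D f \le 1\}$, where the duality of faces/normal cones gives the needed homeomorphism of boundaries, and only at the end translate back to the subdifferential language of the eigenvalue condition.
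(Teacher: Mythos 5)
Your P1 argument is essentially the paper's: the eigenvalue condition $0\in\partial f(\vec x)-\lambda\partial g(\vec x)$ is unwound via the Fenchel-type duality between $f$ and $\D f$ at the level of subdifferentials, and the resulting subgradient vector is shown to solve the dual eigenvalue equation with the same $\lambda$. That part is sound and matches the paper.

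The difficulty in P2 and P3 is the step you flag but do not resolve: turning the subdifferential correspondence into a single-valued continuous odd map (for P2) or an odd \emph{homeomorphism} (for P3). Neither fix you propose works. Picking a ``canonical representative'' in $\partial f(\vec x)\cap\lambda\partial g(\vec x)$ cannot give continuity: the convex subdifferential is only upper semicontinuous (a cusco), and cuscos need not admit continuous selections --- already for $f(x)=|x_1|$ any selection of $\partial f$ jumps across the wall $x_1=0$. And passing to polar bodies and using the outward-normal correspondence between $\partial\{f\le1\}$ and $\partial\{\D f\le1\}$ does \emph{not} produce a homeomorphism of boundaries: a flat face of $\{f\le1\}$ is collapsed by the normal map to a vertex of the polar body, so the correspondence is many-to-one unless both bodies are $C^1$ and strictly convex. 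Since the theorem is for all $f,g\in\CH_1(\R^n)$ --- and the motivating applications are precisely the polyhedral $l^1/l^\infty$ cases --- this is not a cosmetic gap.

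The paper never builds such a homeomorphism. For P2 it proves instead that the set-valued image $\partial g(S)$ does not \emph{decrease} genus (Lemma \ref{lem:size_g_S}), handling nonsmoothness via Moreau--Yosida approximation together with the continuity of genus under $\epsilon$-enlargement; and to obtain an actual odd continuous map landing (approximately) in $S_\lambda(\D g,\D f)$ it glues, by a symmetric partition of unity, gradients of locally chosen smooth surrogate functions $g_x$ whose gradients sit near the correct subdifferential cones (Lemma \ref{lem:ef}). This gives one inequality on genus, and the reverse follows by applying $\D$ once more. For P3 the paper likewise avoids transporting minmax test sets by a homeomorphism: from a test set $S\subset g^{-1}(1)$ it builds the geometric join $W=\partial g(S)*\mathbb{S}$ with the unit sphere of $\Ker g$, bounds $\mathrm{genus}(W\cap(\Ker f)^\bot)$ from below via the genus-nondecrease of $\partial g$ plus a dimension count, and controls $\D g/\D f$ on $W$ directly from the subgradient inequality. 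The index shift $k\mapsto k-d_f+d_g$ falls out of the genus arithmetic of this join, not from a quotient. To repair your proposal you would need to replace the single-valued map $T$ and the homeomorphism language by this kind of one-sided, set-valued, genus-monotone argument applied in both directions.
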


Moreover, combining the norm-like duality operator $\D$ with $\M_A$ and $\mathcal P_A$ for a matrix $A$, we obtain the following main consequence of the theorem above.
\begin{corollary}\label{cor:main2}
Let $\f\in \CH_1(\R^m)$, $\g\in \CH_1(\R^n)$ and $A\in \R^{n\times m}$. Then, the nonzero eigenvalues of  $(\M_{A^\top}\f,\g)$, $(\D \g,\D\M_{A^\top}\f)$, $(\M_A\D  \g, \D \f)$, $( \f,\PA_A \g)$ and $(\M_{A^\top} \f,\M_{A^\top}\PA_A
\g)$  coincide. Moreover, if $\f$ and $\g$ are even functions, then  
the multiplicities of the nonzero eigenvalues coincide and  the nonzero  variational eigenvalues of all these function pairs coincide exactly, up to reordering.  
\end{corollary}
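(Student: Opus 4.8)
\textbf{Proof plan for Corollary \ref{cor:main2}.}
The plan is to reduce everything to Theorem \ref{thm:main1} together with the algebraic identities for $\D$, $\M_A$, $\PA_A$ collected in Section \ref{sec:norm_dual} (in particular Proposition \ref{thm:double-dual} and the identities $\M_A\D = \D\PA_A$, $\D\M_A = \PA_A\D$, $\D\D = \mathrm{id}$). The key observation is that each of the five listed function pairs is obtained from a single ``generating'' pair by applying $\D$ to both arguments, swapping the order, or conjugating by $\M_{A^\top}$, and that all of these operations are spectrum-preserving by Theorem \ref{thm:main1} or trivially (the eigenpair relation $0\in\partial f(x)-\lambda\partial g(x)$ is symmetric under no operation but becomes one for $(\D g,\D f)$ via P1).

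First I would set $\hat f := \M_{A^\top}\f \in \CH_1(\R^n)$, so that the first pair is $(\hat f, g)$ with both functions in $\CH_1(\R^n)$. Applying Theorem \ref{thm:main1}.P1 to the pair $(\hat f, g)$ immediately gives that the nonzero eigenvalues of $(\hat f,g)$ and of $(\D g,\D\hat f) = (\D g,\D\M_{A^\top}\f)$ coincide, which handles the second pair. For the third pair I would use the identity $\D\M_{A^\top} = \PA_{A^\top}\D$ from the discussion after Proposition \ref{thm:double-dual} — wait, more carefully: the relevant identity is $\M_A\D\g = \D\PA_A\g$, so $(\M_A\D\g,\D\f)$ can be rewritten, and one checks via the same identities that its nonzero spectrum matches that of $(\D g, \D\M_{A^\top}f)$; alternatively, apply P1 to $(\D g, \D\hat f)$ to return to $(\hat f, g)$ up to the swap, and then massage $\D\hat f = \D\M_{A^\top}f = \PA_{A^\top}\D f$ against $\M_A\D g$. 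The fourth pair $(\f,\PA_A\g)$ is handled by applying P1 to get the pair $(\D\PA_A\g,\D\f) = (\M_A\D\g,\D\f)$ — which is exactly the third pair — so the fourth and third have the same nonzero spectrum. Finally, the fifth pair $(\M_{A^\top}\f,\M_{A^\top}\PA_A\g) = (\hat f,\M_{A^\top}\PA_A\g)$: using $\M_{A^\top}\PA_A = \M_{A^\top}\D\M_A\D$ and the identity $\M_{A^\top}\D = \D\PA_{A^\top}$ one rewrites this, but the cleanest route is to observe directly from the chain rule for Clarke subdifferentials that $0\in\partial(\M_{A^\top}f)(x) - \lambda\,\partial(\M_{A^\top}\PA_A g)(x)$ for $x = A^\top y$ corresponds to $0 \in \partial f(A y) - \lambda\,\partial g(\cdots)$; in fact the pair $(\M_{A^\top}f,\M_{A^\top}\PA_A g)$ is the ``$\M_{A^\top}$-pullback'' of $(\f,\PA_A\g)$, and pulling a pair back by $\M_{A^\top}$ changes neither the nonzero eigenvalues nor — by the genus/homeomorphism argument applied to the map $A^\top$ restricted to $(\Ker A^\top)^\perp$ — the multiplicities and variational eigenvalues.

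For the ``moreover'' part, once the chain of nonzero-spectrum equalities is in place I would run the same chain for multiplicities and variational eigenvalues: P2 and P3 of Theorem \ref{thm:main1} transfer these invariants across each $\D$-swap, and for the $\M_{A^\top}$-pullback step I would show separately that if $B$ is a linear map that is injective on $(\Ker B)^\perp$ with range a subspace, then for $f,g\in\CH_1$ with $\Ker f,\Ker g \supseteq$ the relevant subspaces, the eigenspaces satisfy $S_\lambda(\M_B f,\M_B g) = B^\top\big(S_\lambda(f,g)\big) \oplus \Ker B^\top$ (up to the kernel adjustments), and then use that $\mathrm{genus}$ is invariant under the odd homeomorphism induced by $B^\top$ on $(\Ker B^\top)^\perp$ and that adding a linear subspace to a symmetric set only shifts the genus in the controlled way already used in Theorem \ref{thm:main1}.P3; the $d_f$-type index shifts in P3 must be bookkept along the way, using $\dim(\M_{A^\top}f)^{-1}(0) = \dim f^{-1}(0) + \dim\Ker A^\top$ and the analogous formula for $\PA_A$.

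The main obstacle I expect is the bookkeeping, not any conceptual difficulty: verifying that the five pairs genuinely form one orbit under $\{\D\text{-on-both-and-swap}, \M_{A^\top}\text{-pullback}\}$ requires carefully chasing the identities $\M_A\D = \D\PA_A$ and $\D\M_A = \PA_A\D$ in both directions and keeping track of which space ($\R^m$ or $\R^n$) each intermediate function lives on, and then confirming that the kernel-dimension shifts $d_f, d_g, d_{fg}$ appearing in Theorem \ref{thm:main1}.P3 compose consistently along the chain so that the final reindexing statement is uniform across all five pairs. A secondary subtlety is that Proposition \ref{thm:double-dual} requires a kernel-inclusion hypothesis ($\Ker f\subseteq\Ker A$), so to apply $\PA_A = \D\M_A\D$ freely one must either invoke positivity or verify that the relevant kernel inclusions hold automatically for the functions appearing here (e.g.\ $\Ker(\PA_A g)\supseteq \Ker A^\top$ by construction), and state the corollary's hypotheses so that this goes through — likely the intended reading is that these identities are used only in the combinations where the inclusion is automatic.
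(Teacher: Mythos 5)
Your plan correctly handles the two $\D$-swap arrows that follow from Theorem~\ref{thm:main1}.P1 together with the identity $\D\PA_A\g=\M_A\D\g$: pair~1 $\leftrightarrow$ pair~2, and pair~3 $\leftrightarrow$ pair~4. However, these two steps leave you with two disconnected islands, $\{1,2\}$ and $\{3,4,5\}$ (or $\{3,4\}$ and $\{5\}$ and $\{1,2\}$, depending on how you read your sketch), and the bridge between them is exactly where your plan has genuine gaps, not just bookkeeping.

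First, the connection between the groups. You claim one ``checks via the same identities'' that pair~3 $(\M_A\D\g,\D\f)$ has the same nonzero spectrum as pair~2 $(\D\g,\D\M_{A^\top}\f)$, or that one can ``massage'' the expressions. This does not go through: writing $F=\D\f$, $G=\D\g$, $B=A^\top$, pair~2 becomes $(G,\PA_B F)$ and pair~3 becomes $(\M_{B^\top}G,F)$ --- this is structurally the \emph{same} type of equivalence (pair~4 vs.\ pair~1 with relabelled data), so trying to prove it ``by identities'' is circular. The paper instead proves $(\M_{A^\top}\f,\g)\Leftrightarrow(\M_A\D\g,\D\f)$ by a direct subgradient computation: from $\vec 0 \in A^\top\partial\f(A\vec x)-\lambda\partial\g(\vec x)$ one extracts $\vec u\in\partial\g(\vec x)$, $\vec v\in\partial\f(A\vec x)$ with $\lambda\vec u = A^\top\vec v$, and then passes $\vec v$ through $\partial\D\f$, $\partial(\M_A\D\g)$ via the conjugacy relation between $\partial\phi$ and $\partial\D\phi$. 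This is a new argument on top of Theorem~\ref{thm:main1}, not a consequence of the algebraic identities of Section~\ref{sec:norm_dual}.

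Second, the pair~5 step. Your claim that pair~5 $(\M_{A^\top}\f,\M_{A^\top}\PA_A\g)$ is the ``$\M_{A^\top}$-pullback'' of pair~4 $(\f,\PA_A\g)$ and that ``pulling a pair back by $\M_{A^\top}$ changes neither the nonzero eigenvalues'' is false in general. If $\vec x$ is an eigenvector of pair~5, you only get $A^\top(\vec u-\lambda\vec v)=\vec 0$ for some $\vec u\in\partial\f(A\vec x)$, $\vec v\in\partial(\PA_A\g)(A\vec x)$; this forces $\vec u-\lambda\vec v\in\Ker A^\top$, not $\vec u=\lambda\vec v$. Since $\Ker\f\supseteq\Ker A^\top$ is \emph{not} assumed, you cannot conclude that $A\vec x$ is an eigenvector of pair~4, and your suggested lemma $S_\lambda(\M_B\f,\M_B\g)=B^\top(S_\lambda(\f,\g))\oplus\Ker B^\top$ fails for exactly this reason. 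The paper avoids the issue entirely by comparing pair~5 to pair~1 (not pair~4), where only the \emph{second} arguments $\g$ and $\g_{\Ker A}:=\M_{A^\top}\PA_A\g$ differ; it then uses Lemma~\ref{lem:g_ker} to show that every nontrivial eigenvector of pair~1 already lies in the set $S=\{\vec x:\g(\vec x)=\g_{\Ker A}(\vec x)\}$, and that on $S$ one has $\partial\g_{\Ker A}(\vec x)=\partial\g(\vec x)\cap\Ker(A)^\perp$, which together with $A^\top\partial\f(A\vec x)\subseteq\Ker(A)^\perp$ makes the two eigenvector conditions coincide. This auxiliary lemma and the argument around it are the conceptual core of the pair~5 step, not bookkeeping.

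Finally, for the ``moreover'' part (multiplicities and variational eigenvalues), Theorem~\ref{thm:main1}.P2/P3 only covers the two $\D$-swap arrows; the paper must re-do the topological work (geometric joins $W=\xi(\partial\g(S)\cap\Ker(A)^\perp)*\mathbb{S}$, genus estimates, index shifts by $d_A$, $d_{A^\top}$) separately for the two $?$-arrows. Your plan acknowledges that some genus argument is needed, but defers it to ``the controlled way already used in Theorem~\ref{thm:main1}.P3'' without noticing that the analogue of P3 for the $?$-arrows is a nontrivial additional construction, not a direct reuse. Overall, the missing ingredients are precisely the direct subgradient argument for pair~1 $\leftrightarrow$ pair~3 and Lemma~\ref{lem:g_ker}; your assessment that ``the main obstacle is bookkeeping, not any conceptual difficulty'' is optimistic.
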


We subdivide the relatively long proof of the main results above into several separate parts, 
as well as a number of smaller preliminary results that are of independent interest.

First, we prove that nonzero eigenvalues are preserved under $\D$.
\begin{proof}[Proof of Theorem \ref{thm:main1} point P1] 
 For an eigenpair $(\lambda,\vec x)$ of $(\f,\g)$ with $\lambda\ne0$ and $\vec x\ne\vec0$, it is easy to see that $\f(\vec x)=0$ $\Leftrightarrow$ $\g(\vec x)=0$, and in this case, we have  $\D \f(\vec x)=0$, $\D \g(\vec x)=0$, and  $\vec 0\in \partial \D \f(\vec x)\cap \partial \D \g(\vec x)$  which implies  $\vec 0\in \partial \D \g(\vec x)- \lambda \partial \D \f(\vec x)$. 
Hence,  $(\lambda,\vec x)$ is also an eigenpair of $(\D \g,\D \f)$. In fact, from this proof, we obtain that if $\f^{-1}(0)\cap \g^{-1}(0)\ne\{\vec0\}$, then the spectra of  $(\f,\g)$ and $(\D \g,\D \f)$ are $\R$. Therefore, without loss of generality, we assume that $\f^{-1}(0)\cap \g^{-1}(0)=\{\vec0\}$,  $\g(\vec x)=1$ and $\f(\vec x)=\lambda\ne0$. Thus, there exists $\vec u\in \partial \g(\vec x)$ such that $\lambda\vec u\in \partial \f(\vec x)$. Clearly, $\vec u\ne \vec0$. 
It follows from the fact $\partial \g(\vec x)\subset (\g^{-1}(0))^\bot=((\D \g)^{-1}(0))^\bot$ that $\D \g(\vec u)\ne0$. Moreover, we have $\langle \vec u,\vec x\rangle=\g(\vec x)=1$ by Euler's identity, and $\langle \vec u,\vec x'\rangle-1=\langle \vec u,\vec x'-\vec x\rangle\le \g(\vec x')-\g(\vec x)=\g(\vec x')-1$, $\forall \vec x'\in\R^n$ by the definition of the subgradient. Accordingly, $\D \g(\vec u)=1$, and for any
$\vec u'\in \R^n$,  $\langle \vec u'-\vec u,\vec x\rangle=\langle \vec u',\vec x\rangle-1\le \D \g(\vec u') -1= \D \g(\vec u')-\D \g(\vec u)$, which implies that $\vec x\in \partial \D \g(\vec u)$. By $\f(\vec x/\lambda)=1$ and $\lambda \vec u\in \partial \f(\vec x)=\partial \f(\vec x/\lambda)$, we similarly derive that $\vec x/\lambda\in \partial \D \f(\lambda \vec u)=\partial \D \f(\vec u)$ according to the zero-homogeneity of $\partial \f$ and $\partial\D \f$. As a consequence, $\vec0=\vec x-\lambda\cdot \vec x/\lambda\subset \partial \D \g(\vec u)-\lambda\partial \D \f(\vec u)$, i.e., $(\lambda,\vec u)$ is   an eigenpair of $(\D \g,\D \f)$. The converse also holds. And since $\partial \D \f$ and $\partial \D \g$ are scaling invariant, we indeed obtain that $\forall\vec u\in \mathrm{cone}(\partial \f(\vec x))\cap  \mathrm{cone}(\partial \g(\vec x)) $, $(\lambda,\vec u)$ is an eigenpair  of $(\D \g,\D \f)$.
\qed\end{proof}

Then, we move on to studying their multiplicities. To this end, we first observe that the genus of a compact set grows under the action of the subgradient of even functions. Here and throughout, we say  a function is $C^1$-smooth if it has continuous gradient on $\R^n\setminus\{\vec0\}$.

\begin{lemma}\label{lem:size_g_S}
Let $g\in \CH_1(\R^n)$ be an even function. Then, the   Krasnoselskii genus of a compact subset $S$ is smaller than or equal to that  of the subset $\partial g(S) := \cup_{\vec x\in S}\partial \g(\vec x)$.  
\end{lemma}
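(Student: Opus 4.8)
The plan is to replace the discontinuous set-valued map $\partial g$ by a genuine continuous odd map from $S$ into an arbitrarily small neighbourhood of $\partial g(S)$, and then to invoke two standard properties of the Krasnoselskii genus: (i) if $\varphi\colon S\to T$ is an odd continuous map between closed symmetric sets, then $\mathrm{genus}(S)\le\mathrm{genus}(T)$; and (ii) if $A$ is compact, symmetric and $\vec 0\notin A$, then $\mathrm{genus}(A)<\infty$ and there is $\delta>0$ such that the closed $\delta$-neighbourhood $\overline{N_\delta(A)}$ still satisfies $\mathrm{genus}(\overline{N_\delta(A)})=\mathrm{genus}(A)$ (see e.g.\ \cite{Krasnoselski}). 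Note that $\partial g$ itself cannot play the role of $\varphi$: it is set-valued and, at every point where $g$ fails to be differentiable, genuinely discontinuous, so in general it admits no continuous selection; the slack afforded by the neighbourhood in (ii) is exactly what absorbs this defect.

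First I would collect the structural facts. Since $g$ is finite and convex on $\R^n$, the subdifferential $\partial g$ is locally bounded and has closed graph (see e.g.\ \cite{Rockafellar}), so $\partial g(S)$ is compact whenever $S$ is; and since $g$ is even, $\partial g(-\vec x)=-\partial g(\vec x)$, hence $\partial g(S)=-\partial g(S)$ for $S=-S$. If $\vec 0\in\partial g(S)$ — which in particular occurs whenever $\vec 0\in S$, because $g\ge 0$ forces $\vec 0\in\partial g(\vec 0)$ — then $\partial g(S)$ is a symmetric set containing the origin, so $\mathrm{genus}(\partial g(S))=\infty$ and there is nothing to prove. I may therefore assume $\vec 0\notin\partial g(S)$.

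For the remaining case I would smooth $g$: pick an even mollifier $\rho_\epsilon$ supported in $\{|\vec y|\le\epsilon\}$ and set $g_\epsilon:=g*\rho_\epsilon$, which is $C^\infty$, convex and even, so $\nabla g_\epsilon$ is continuous and odd on $S$. Writing $\nabla g_\epsilon(\vec x)=\int\nabla g(\vec x-\vec y)\,\rho_\epsilon(\vec y)\,\mathrm{d}\vec y$ (with $\nabla g$ the a.e.\ defined gradient of $g$) exhibits $\nabla g_\epsilon(\vec x)$ as an average of subgradients $\nabla g(\vec x-\vec y)\in\partial g(\vec x-\vec y)$ over $|\vec y|\le\epsilon$. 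Upper semicontinuity of $\partial g$, made uniform over the compact set $S$ by a finite-subcover argument (this is where local boundedness of $\partial g$ enters), gives, for each prescribed $\delta>0$, an $\epsilon_0>0$ such that $\partial g(\vec x-\vec y)\subseteq N_\delta(\partial g(\vec x))$ for all $\vec x\in S$ and all $|\vec y|\le\epsilon\le\epsilon_0$; since $\partial g(\vec x)$ is convex, its closed $\delta$-neighbourhood is convex as well and contains the average, so $\nabla g_\epsilon(\vec x)\in\overline{N_\delta(\partial g(\vec x))}\subseteq\overline{N_\delta(\partial g(S))}$. Now choose $\delta$ so that $\mathrm{genus}(\overline{N_\delta(\partial g(S))})=\mathrm{genus}(\partial g(S))$ via (ii), and then any $\epsilon\le\epsilon_0$: the restriction $\nabla g_\epsilon\colon S\to\overline{N_\delta(\partial g(S))}$ is continuous and odd, so (i) yields $\mathrm{genus}(S)\le\mathrm{genus}(\overline{N_\delta(\partial g(S))})=\mathrm{genus}(\partial g(S))$, as claimed.

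The one point where real care is needed is the uniform upper semicontinuity of $\partial g$ over $S$, i.e.\ producing a single $\epsilon_0$ valid for every $\vec x\in S$ simultaneously; I would write this out in detail, although it reduces to a routine compactness argument. A variant that bypasses mollification is to take $g_\epsilon$ to be the Moreau--Yosida regularization of $g$, whose gradient $\nabla g_\epsilon(\vec x)=\epsilon^{-1}(\vec x-\mathrm{prox}_{\epsilon g}(\vec x))$ lies \emph{exactly} in $\partial g(\mathrm{prox}_{\epsilon g}(\vec x))$, with $\mathrm{prox}_{\epsilon g}(\vec x)\to\vec x$ uniformly on $S$ as $\epsilon\to 0$; the same uniform estimate then places $\nabla g_\epsilon(S)$ inside $\overline{N_\delta(\partial g(S))}$ and the conclusion follows identically.
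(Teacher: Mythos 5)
Your proof is correct and follows essentially the same strategy as the paper's: smooth the function (you offer mollification, with Moreau--Yosida as a variant; the paper uses Moreau--Yosida directly), place the resulting odd continuous gradient map into a small neighbourhood of $\partial g(S)$ via upper semicontinuity of the subdifferential, and then combine genus monotonicity under odd continuous maps with stability of the genus on small neighbourhoods of a compact symmetric set. Your write-up is in fact slightly more careful than the paper's, in that you dispatch the case $\vec 0\in\partial g(S)$ explicitly (which is needed when $S$ meets $\Ker g$) and you collapse the paper's two-step structure into a single application of genus monotonicity to $\nabla g_\epsilon\colon S\to\overline{N_\delta(\partial g(S))}$.
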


\begin{proof}
The proof is based on the deformation nondecreasing property and the continuity of the Krasnoselskii genus. 
We divide the proof  into two steps:

\begin{description}
    \item[Step 1.]  
 Suppose that   $\g$ is $C^1$-smooth on $\R^n\setminus\{\vec0\}$.
Since the vector field induced by $\partial \g:\R^n\setminus\{\vec0\}\to \R^n\setminus\{\vec0\}$ is continuous, for any compact subset $S\subset \R^n\setminus\{\vec0\}$ with $\mathrm{genus}(S)=k$, 
the map $\vec x\mapsto 
\partial \g(\vec x)$ is continuous and if $\g$ is even, then $\partial \g$ is odd, i.e., $\partial \g(-\vec x)=-\partial \g(\vec x)$, $\forall\vec x\in\R^n$.  Therefore,  by the deformation nondecreasing property,  $\partial \g(S)$ is a subset of $\R^n$ with  $\mathrm{genus}(\partial \g(S))\ge k$. 
That is, 
for a even,  convex and smooth function $\g$, we have  $\mathrm{genus}(\partial \g(S))\ge \mathrm{genus}(S)$.

\item[Step 2.]  Suppose that $\g$ is not  $C^1$-smooth on $\R^n\setminus\{\vec0\}$.

In this case, we take the Moreau-Yosida approximation of $\g$, which is defined by
$$\g_\alpha(\vec x)=\inf\limits_{\vec y\in \R^n}\g(\vec y)+\frac{1}{2\alpha}\|\vec y-\vec x\|_2^2,\;\alpha>0,$$
where we use the $l^2$-norm $\|\cdot\|_2$. It is known that $\g_\alpha$ is $C^1$-smooth and convex.
In fact, for sufficiently small $\epsilon>0$, the size of the  $\epsilon$-neighborhood of $\partial \g(S)$ equals $\mathrm{genus}(\partial \g(S))$, and for sufficiently small $\alpha$, $\partial \g_\alpha(S)$ lies in the $\epsilon$-neighborhood of $\partial \g(S)$. Therefore, $\mathrm{genus}(\partial \g(S))\ge\mathrm{genus}(\partial \g_\alpha(S)) $,  which is  
larger than or equal to $\mathrm{genus}(S)$ by Step 1.  \qed
\end{description}
\end{proof}

Next, we show that for smooth functions the subgradient maps the eigenspace of $\lambda$ as an eigenvalue of $(f,g)$ into the eigenspace of $\lambda$ as an  eigenvalue of the dual pair $(\D g, \D f)$.
\begin{lemma}\label{lem:cd}
Let $f,g\in \CH_1(\R^n)$ and let $\lambda$ be an eigenvalue of $(f,g)$. 
If $\g$ is differentiable on $\R^n\setminus\{\vec0\}$, then $\partial \g(S_\lambda(\f,\g))\subset S_\lambda(\D \g,\D \f)$. Similarly, if $\f$ is differentiable, then $\partial \f(S_\lambda(\f,\g))\subset S_\lambda(\D \g,\D \f)$.
\end{lemma}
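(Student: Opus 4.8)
The plan is to localise the self\-duality argument used in the proof of Theorem~\ref{thm:main1}(P1), following one eigenvector through the duality instead of the whole spectrum. It suffices to prove the assertion for $g$: since $0\in\partial f(x)-\lambda\partial g(x)$ is equivalent to $0\in\partial g(x)-\tfrac1\lambda\partial f(x)$, one has $S_\lambda(f,g)=S_{1/\lambda}(g,f)$ and likewise $S_\lambda(\D g,\D f)=S_{1/\lambda}(\D f,\D g)$, so the statement for $f$ differentiable is exactly the statement for $g$ differentiable applied to the pair $(g,f)$ at the eigenvalue $1/\lambda$.

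So suppose $g$ is differentiable on $\R^n\setminus\{\vec0\}$ (hence, by the remarks preceding Lemma~\ref{lem:size_g_S}, $\Ker g=\{\vec0\}$). As in the proof of P1 one may assume $\lambda\ne0$ — the eigenvalue $0$ forces the eigenvector into $f^{-1}(0)$, which is a trivial case for the inclusion — and $f^{-1}(0)\cap g^{-1}(0)=\{\vec0\}$, since otherwise both spectra are all of $\R$. Fix a nonzero $x\in S_\lambda(f,g)$ and set $u:=\nabla g(x)$, the unique element of $\partial g(x)$; then $\lambda u\in\partial f(x)$. Euler's identity gives $\langle u,x\rangle=g(x)$ and $\langle\lambda u,x\rangle=f(x)$, hence $f(x)=\lambda g(x)$, and after rescaling $x$ (every subdifferential involved is $0$\-homogeneous) one may normalise $g(x)=1$, $f(x)=\lambda$, with $\lambda>0$ because $f\ge0$. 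Note also $u\ne\vec0$ and $u\bot\Ker f$, since $\lambda u\in\partial f(x)\subseteq(\Ker f)^\bot$.

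Next I would run the computation of P1 twice. From the subgradient inequality for $g$ at $x$ together with $\langle u,x\rangle=g(x)=1$ one gets $\langle u,y\rangle\le g(y)$ for all $y$, so $\D g(u)\le1$; feasibility of $x$ in the supremum defining $\D g(u)$ (here the orthogonality constraint is vacuous) together with $\langle u,x\rangle=1$ then forces $\D g(u)=1$ and $\D g(y')\ge\langle y',x\rangle$ for all $y'$, i.e.\ $x\in\partial\D g(u)$. Repeating this for $f$ at $x$ with the subgradient $\lambda u$ gives $\D f(\lambda u)=1$, hence $\D f(u)=1/\lambda$, and shows that $(x-Px)/\lambda$ — with $P$ the orthogonal projection onto $\Ker f$ — is feasible for $\D f$ and realises $\langle u,(x-Px)/\lambda\rangle=1/\lambda=\D f(u)$ (using $u\bot\Ker f$), so $(x-Px)/\lambda\in\partial\D f(\lambda u)=\partial\D f(u)$ by $0$\-homogeneity. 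To conclude $u\in S_\lambda(\D g,\D f)$ one then needs a common element of $\partial\D g(u)$ and $\lambda\,\partial\D f(u)$: the natural candidate is $x-Px\in\lambda\,\partial\D f(u)$, which also satisfies $\langle u,x-Px\rangle=\langle u,x\rangle=1=\D g(u)$, and the remaining task is to place it inside $\partial\D g(u)=\{v:\langle u,v\rangle=\D g(u),\ g(v)\le1\}$. Since $u=\nabla g(x)$ for an arbitrary nonzero $x\in S_\lambda(f,g)$, this would give $\partial g(S_\lambda(f,g))\subseteq S_\lambda(\D g,\D f)$.

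I expect the genuine obstacle to be precisely that last step — the bookkeeping of the $\Ker f$\- (and, in the symmetric case, $\Ker g$\-) components, i.e.\ controlling how the eigenvector $x$ sits relative to $\Ker f$ so that the candidate subgradients really land in $\partial\D g(u)$ and $\lambda\,\partial\D f(u)$ and not merely in a kernel\-translate of them. This is the point handled rapidly by ``we similarly derive'' in the proof of P1, and it is where I would concentrate the effort, exploiting the structure of $S_\lambda(f,g)$ together with the relations between $\partial g$, $\D g$ and $\D f$ already exploited there; everything else is a direct transcription of that proof, with the passage from subsets to compact subsets (intersecting $S_\lambda$ with a sphere) being routine.
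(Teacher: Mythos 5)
Your proposal is not the route the paper takes. The paper's own proof of Lemma~\ref{lem:cd} is a three-line corollary of Theorem~\ref{thm:main1}(P1): it invokes the inclusion
$\bigcup_{x\in S_\lambda(f,g)}\mathrm{cone}(\partial f(x))\cap\mathrm{cone}(\partial g(x))\subset S_\lambda(\D g,\D f)$
established inside the proof of P1, and then observes that when $g$ is differentiable the singleton $\partial g(x)=\{\nabla g(x)\}$ lies in $\mathrm{cone}(\partial f(x))\cap\partial g(x)$. You instead re-run the P1 computation at a single eigenvector. That said, your version does something valuable: it makes explicit that after producing $x\in\partial\D g(u)$ and $(x-Px)/\lambda\in\partial\D f(u)$ (with $P$ the orthogonal projection onto $\Ker f$) one still has to exhibit a single vector lying in both $\partial\D g(u)$ and $\lambda\,\partial\D f(u)$, and you correctly flag the $\Ker f$-bookkeeping at this step as the genuine obstacle. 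You leave it open, so as written the proposal is not a proof.

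The step you flag cannot in general be closed, because the inclusion asserted by the lemma fails as soon as the eigenvector has a nonzero $\Ker f$-component — and the same issue is glossed over in the proof of P1 at ``we similarly derive that $\vec x/\lambda\in\partial\D f(\lambda\vec u)$'', which is only valid for the $\Ker f$-orthogonal part $x-Px$. A counterexample on $\R^2$: take $f(x)=|x_1|$, so $\Ker f=\{(0,t):t\in\R\}$, and $g(x)=\sqrt{(x_1+x_2)^2+x_2^2}$, smooth and positive definite; both lie in $\CH_1(\R^2)$. The point $x^*=(\sqrt2,-1/\sqrt2)$ satisfies $g(x^*)=1$, $\nabla g(x^*)=(1/\sqrt2,0)$, $\partial f(x^*)=\{(1,0)\}$, so $(\sqrt2,x^*)$ is an eigenpair of $(f,g)$ and $u:=\nabla g(x^*)=(1/\sqrt2,0)$. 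Direct computation gives $\D f(v)=|v_1|$ and $\D g(v)=\sqrt{v_1^2+(v_2-v_1)^2}$, hence $\nabla\D g(u)=(\sqrt2,-1/\sqrt2)$ while $\partial\D f(u)=\{(1,0)\}$; these are never proportional, so $u\notin S_\lambda(\D g,\D f)$ for any $\lambda$ and the inclusion $\partial g(S_{\sqrt2}(f,g))\subset S_{\sqrt2}(\D g,\D f)$ fails. Here indeed $x^*-Px^*=(\sqrt2,0)\notin\partial\D g(u)$ because $g(x^*-Px^*)=\sqrt2>1$. By contrast, with the unprojected conjugate $\hat{\D}$ of Section~\ref{sec:comparison} one has $\partial\hat{\D}f(u)=\{1\}\times\R$ — the extra normal-cone direction along $\Ker f$ absorbs the $\Ker f$-component — and $(\sqrt2,u)$ is then a genuine eigenpair of $(\hat{\D}g,\hat{\D}f)$, consistent with Theorem~\ref{thm:dual-main-o}. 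Any complete argument for Lemma~\ref{lem:cd} in the projected setting must therefore either impose a kernel-compatibility hypothesis (e.g.\ $\Ker f\subset\Ker g$, which makes $x-Px$ feasible for $\D g$ too) or replace $\D$ by $\hat{\D}$ throughout.
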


\begin{proof}
By point P1 of Theorem \ref{thm:main1} we have that 
$$\varnothing\ne\bigcup\limits_{\vec x\in S_\lambda(\f,\g)}\mathrm{cone}(\partial \f(\vec x))\cap \mathrm{cone}(\partial \g(\vec x))\subset S_\lambda(\D \g,\D \f)$$
for any eigenvalue $\lambda$ of $(\f,\g)$.  If $\g$ is derivable at any eigenvector $\vec x\in S_\lambda(\f,\g)$, then $\partial \g(\vec x)\subset \mathrm{cone}(\partial \f(\vec x))\cap \partial \g(\vec x)$. 
Thus, $$\partial \g(S_\lambda(\f,\g)):=\bigcup\limits_{\vec x\in S_\lambda(\f,\g)}\partial \g(\vec x)\subset S_\lambda(\D \g,\D \f).$$

The proof of $\partial \f(S_\lambda(\f,\g))\subset S_\lambda(\D \g,\D \f)$ is similar. 
\qed\end{proof}

Finally,  we need  the following two technical properties. 
\begin{lemma}\label{lem:ef}
Let $f,g\in \CH_1(\R^n)$ and let $\lambda$ be an eigenvalue of the function pair $(f,g)$. It holds
\begin{enumerate}
\item 
The map $\vec x\mapsto \mathrm{cone}(\partial \f(\vec x))\cap \partial \g(\vec x)$ is upper semi-continuous, i.e., $\forall\vec x$,  $\forall\epsilon>0$, there exists $\delta>0$ such that for any $\vec y\in \mathbb{B}_\delta(\vec x)$, 
$\mathrm{cone}(\partial \f(\vec y))\cap \partial \g(\vec y)\subset\mathbb{B}_\epsilon(\mathrm{cone}(\partial \f(\vec x))\cap \partial \g(\vec x))$, where $\mathbb{B}_\epsilon(S)$ is the $\epsilon$-neighborhood of a subset $S$.

\item For any $\vec x\in S_\lambda(\f,\g)$, and for any $\epsilon>0$, there exists an even,  $C^1$-smooth  function $\g_x\in \CH(\R^n)$ with $\g_x^{-1}(0)=\g^{-1}(0)$  and $\delta>0$ such that 
$\partial \g_x( \mathbb{B}_\delta(\vec x))\subset \mathbb{B}_\epsilon(\mathrm{cone}(\partial \f(\vec x))\cap \partial \g(\vec x))$. 
\end{enumerate}
\end{lemma}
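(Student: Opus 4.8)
The plan is to prove both parts using the same two main ingredients: the closedness of the (Clarke) subdifferential as a set-valued map, together with its positive homogeneity of degree zero for one-homogeneous functions, and the Moreau--Yosida regularization already introduced in the proof of Lemma \ref{lem:size_g_S}. For part 1, I would argue as follows. Both $\partial f$ and $\partial g$ are upper semi-continuous, compact-convex-valued maps on $\R^n\setminus\{\vec 0\}$ (a standard property of Clarke subdifferentials of locally Lipschitz functions, here also convex). Hence $\cone(\partial f(\cdot))$ is a closed map as well, since $\cone$ of a compact set not containing the origin --- and $\vec 0\notin\partial f(\vec x)$ exactly when $\vec x\notin\Ker f$, while on $\Ker f$ one has $f\equiv 0$ and the intersection is handled trivially --- is closed, and intersections of closed maps with u.s.c.\ maps are u.s.c.\ provided one of them is locally bounded. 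The map $\vec x\mapsto\partial g(\vec x)$ is locally bounded near any $\vec x\ne\vec 0$ by one-homogeneity (it is bounded on the unit sphere by convexity), so $\vec x\mapsto \cone(\partial f(\vec x))\cap\partial g(\vec x)$, being the intersection of a closed map and a locally bounded u.s.c.\ map, is u.s.c.\ in the $\epsilon$--$\delta$ sense stated. I would spell this out by a routine sequential/neighborhood argument: if the conclusion failed, pick $\vec y_j\to\vec x$ and $\vec w_j\in\cone(\partial f(\vec y_j))\cap\partial g(\vec y_j)$ with $\dist(\vec w_j,\cone(\partial f(\vec x))\cap\partial g(\vec x))\ge\epsilon$; the $\vec w_j$ lie in $\partial g(\vec y_j)$, which is a bounded set for $\vec y_j$ near $\vec x$, so along a subsequence $\vec w_j\to\vec w$, and closedness of both maps forces $\vec w\in\cone(\partial f(\vec x))\cap\partial g(\vec x)$, a contradiction.

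For part 2, I would fix $\vec x\in S_\lambda(f,g)$ and $\epsilon>0$. The idea is to take $g_x$ to be a Moreau--Yosida type smoothing of $g$ that is tailored so that its gradient near $\vec x$ stays close to $\cone(\partial f(\vec x))\cap\partial g(\vec x)$ rather than merely to $\partial g(\vec x)$. Since $\vec x\in S_\lambda(f,g)$, by definition there is $\vec u\in\partial g(\vec x)$ with $\lambda\vec u\in\partial f(\vec x)$, so $\vec u\in\cone(\partial f(\vec x))\cap\partial g(\vec x)$ and in particular this intersection is nonempty. One clean route is: first apply part 1 to get $\delta_0>0$ with $\cone(\partial f(\vec y))\cap\partial g(\vec y)\subset\mathbb B_{\epsilon/2}(\cone(\partial f(\vec x))\cap\partial g(\vec x))$ for $\vec y\in\mathbb B_{\delta_0}(\vec x)$; then build $g_x$ as a convex $C^1$ function with $g_x^{-1}(0)=g^{-1}(0)$ whose subgradient selection on $\mathbb B_\delta(\vec x)$ (for small $\delta\le\delta_0$) is a convex combination/average of elements of $\partial g(\vec y)$ over $\vec y$ near $\vec x$, and control the error so that $\nabla g_x(\mathbb B_\delta(\vec x))\subset\mathbb B_{\epsilon}(\partial g(\vec x'))$ for appropriate nearby $\vec x'$. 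The subtlety is that naive Moreau--Yosida smoothing $g_\alpha$ only guarantees $\nabla g_\alpha$ is near $\partial g$, not near the smaller set $\cone(\partial f)\cap\partial g$; this is why the statement is phrased with the cone-intersection set and why we must combine the smoothing with part 1 and a localization. A cleaner alternative I would seriously consider is to replace the global smoothing by a partition-of-unity construction: cover the unit sphere, on a small cap around $\vec x/\|\vec x\|$ use a smoothing whose gradient is forced into $\mathbb B_\epsilon(\cone(\partial f(\vec x))\cap\partial g(\vec x))$ (possible because that set is nonempty, closed, and by part 1 absorbs all nearby cone-intersections), elsewhere patch with the ambient Moreau--Yosida smoothing, and extend one-homogeneously; evenness is obtained by symmetrizing, $g_x(\vec z)\mapsto\tfrac12(g_x(\vec z)+g_x(-\vec z))$, which preserves convexity, $C^1$-smoothness, one-homogeneity, the kernel, and the gradient bound near $\vec x$ (using that the target set is a cone stable under the relevant symmetry after accounting for $-\vec x$, or by shrinking $\delta$ so that $\mathbb B_\delta(\vec x)$ and $\mathbb B_\delta(-\vec x)$ are disjoint and handling each separately).

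The main obstacle I anticipate is part 2, specifically constructing $g_x$ so that simultaneously (i) it is globally convex, $C^1$, one-homogeneous and nonnegative with exactly the kernel $g^{-1}(0)$ --- so it genuinely lies in $\CH_1(\R^n)$ --- and (ii) its gradient on a whole neighborhood $\mathbb B_\delta(\vec x)$ is pinned near the \emph{small} set $\cone(\partial f(\vec x))\cap\partial g(\vec x)$, not merely near $\partial g(\vec x)$. Reconciling a global convexity constraint with a purely local gradient prescription is delicate, since altering the gradient near $\vec x$ can destroy convexity elsewhere; the resolution is to exploit that we only need the gradient to land in an $\epsilon$-\emph{neighborhood}, which by part 1 already contains all nearby cone-intersection sets, so a sufficiently mild localized modification of the Moreau--Yosida smoothing (or a mollification with a kernel supported in a small enough ball) suffices, and the convexity is preserved because the modification is itself a small convex perturbation. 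Verifying that the kernel is preserved under all these operations is routine given the discussion following Definition \ref{def:CH_p}, since any nonnegative convex one-homogeneous function is constant along its (linear) kernel. The rest of the argument --- evenness by symmetrization, and fixing the radius $\delta$ after the function is built --- is bookkeeping.
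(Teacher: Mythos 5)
Your part 1 is essentially the paper's argument (the paper merely asserts it ``follows directly from the upper semi-continuity of $\partial f$ and $\partial g$''), and your sequential version works: since $\partial g$ is locally bounded, $\vec w_j\in\partial g(\vec y_j)$ has a convergent subsequence, and closedness of $\partial g$ and $\partial f$ (together with boundedness away from $0$ of $\partial f$ off $\Ker f$) forces the limit into the intersection. One small caveat: $\cone(S)$ of a compact set $S$ not containing $0$ need not be a closed \emph{set} (it omits the origin), so ``closedness of the cone map'' as a blanket statement is imprecise; your sequential argument sidesteps this correctly but the claim in passing is slightly off.

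For part 2 there is a genuine gap, and you have in fact put your finger on it yourself: you cannot expect a Moreau--Yosida regularization (or any mollification of $g$) to have its gradient near the small set $\mathrm{cone}(\partial f(\vec x))\cap\partial g(\vec x)$ rather than merely near $\partial g(\vec x)$, and the proposed fix --- a ``localized modification'' glued by partitions of unity and extended one-homogeneously --- is not a construction; gluing while preserving global convexity, one-homogeneity and the kernel is exactly the hard point, and you don't say how to do it. The paper's proof takes a fundamentally different and much simpler route: it abandons any attempt to make $g_x$ resemble $g$. After reducing to the positive-definite case, pick any $\vec v\in\mathrm{cone}(\partial f(\vec x))\cap\partial g(\vec x)$ (nonempty because $\vec x\in S_\lambda(f,g)$). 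Since $g$ is positive-definite and one-homogeneous, Euler's identity gives $\langle\vec x,\vec v\rangle=g(\vec x)>0$, so one can choose a symmetric positive-definite matrix $A$ with $A\vec x=\vec v$, and set $g_x(\vec y)=\sqrt{\langle\vec x,A\vec x\rangle\,\langle\vec y,A\vec y\rangle}$, i.e.\ a rescaled weighted $\ell^2$-norm. This $g_x$ is automatically even, $C^1$ away from the origin, convex, one-homogeneous, positive-definite (hence has the required kernel after the reduction), and satisfies $\nabla g_x(\vec x)=A\vec x=\vec v$; continuity of $\nabla g_x$ then gives $\nabla g_x(\mathbb{B}_\delta(\vec x))\subset\mathbb{B}_\epsilon(\vec v)\subset\mathbb{B}_\epsilon(\mathrm{cone}(\partial f(\vec x))\cap\partial g(\vec x))$ for small $\delta$. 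No smoothing of $g$, no partition of unity, and no fight with global convexity is needed. The lesson is that the statement only requires \emph{some} function in $\CH_1$ with the prescribed kernel and a prescribed gradient at one point, and a quadratic norm already does the job.
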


\begin{proof}
Point 1 follows directly from the upper semi-continuity of $\partial \f$ and $\partial \g$. Let us discuss point 2.
We only need to deal with the case that $\g$ is positive-definite.  
For any $\vec v\in\partial \g(\vec x)$, $\langle\vec x,\vec v\rangle=\g(\vec x)>0$. Then, by a standard argument in linear algebra, 
there exists a positive-definite matrix $A$ such that $A\vec x=\vec v$. Then, we take $\g_x(\vec y)=\sqrt{\langle\vec x,A\vec x\rangle\cdot\langle\vec y,A\vec y\rangle}$. It is clear that $\g_x$ is smooth,   positive-definite and convex and one-homogeneous. And it is not difficult to check that $\partial \g_x(\vec x)=A\vec x=\vec v$. Now, suppose that the vector $\vec v$ lies in $\mathrm{cone}(\partial \f(\vec x))\cap \partial \g(\vec x)$. By the above discussion, we immediately obtain that $\forall\epsilon>0$, $\exists\delta>0$ such that  $\partial \g_x( \mathbb{B}_\delta(\vec x))\subset \mathbb{B}_\epsilon(\vec v) \subset \mathbb{B}_\epsilon(\mathrm{cone}(\partial \f(\vec x))\cap \partial \g(\vec x))$.
\qed\end{proof}

\begin{proof}[Proof of Theorem \ref{thm:main1} point P2]
    
From Lemmas \ref{lem:size_g_S} and \ref{lem:cd} we have $\mathrm{genus}(S_\lambda(\f,\g))\le \mathrm{genus}(S_\lambda(\D \g,\D \f))$ if  $\f$ or $\g$ is differentiable.  Conversely, if  $\D \f$ or $\D \g$ is differentiable, 
$\mathrm{genus}(S_\lambda(\f,\g))\ge \mathrm{genus}(S_\lambda(\D \g,\D \f))$. Thus, we obtain that the multiplicity of $\lambda$ as an eigenvalue of $(\f,\g)$ coincides with the multiplicity of $\lambda$ as an eigenvalue of $(\D \g,\D \f)$. Next, we prove that the same property holds without the differentiability condition.

Let $\hat{S}_\lambda(\f,\g)=S_\lambda(\f,\g)\cap \{\vec x:\|\vec x\|_2=1\}$ be the `unit sphere' of the eigenspace corresponding to $\lambda$. Then, the multiplicity of $\lambda$ coincides with $\mathrm{genus}(\hat{S}_\lambda(\f,\g))$.   
Fix an $\epsilon>0$ such that 
\[
\mathrm{genus}\; \mathbb{B}_\epsilon\!\!\left(\bigcup\limits_{\vec x\in \hat{S}_\lambda(\f,\g)}\!\!\!\!\mathrm{cone}(\partial \f(\vec x))\cap \partial \g(\vec x)\right)\!\!=\mathrm{genus}\left(\bigcup\limits_{\vec x\in \hat{S}_\lambda(\f,\g)}\!\!\!\!\mathrm{cone}(\partial \f(\vec x))\cap \partial \g(\vec x)\right).
\]
Take $\epsilon'<\frac12\epsilon$. 
Due to Lemma \ref{lem:ef}, we can consider a family of open sets  $\{\mathbb{B}_{\delta_x}(\vec x):\vec x\in \hat{S}_\lambda(\f,\g)\}$ and the corresponding smooth function family  $\{\g_{ x}:\vec x\in \hat{S}_\lambda(\f,\g)\}$  such that for any $\vec y\in \mathbb{B}_{2\delta_x}(\vec x)$, we have 
$\mathrm{cone}(\partial \f(\vec y))\cap \partial \g(\vec y)\subset \mathbb{B}_{\epsilon'}(\mathrm{cone}(\partial \f(\vec x))\cap \partial \g(\vec x))$ and $\partial \g_x( \mathbb{B}_{2\delta_x}(\vec x))\subset \mathbb{B}_{\epsilon'}(\mathrm{cone}(\partial \f(\vec x))\cap \partial \g(\vec x))$, for a sufficiently small~$\delta_x$. 

Since $\hat{S}_\lambda(\f,\g)$ is compact and $\{\mathbb{B}_{\delta_x}(\vec x):\vec x\in \hat{S}_\lambda(\f,\g)\}$  induces an open cover of $\hat{S}_\lambda(\f,\g)$, we can take a finite subfamily $\{\mathbb{B}_{\delta_i}(\vec x_i)\}$ of $\{\mathbb{B}_{\delta_x}(\vec x):\vec x\in \hat{S}_\lambda(\f,\g)\}$  such that 
the centers $\{\vec x_i\}$ of these open balls are  distributed  centrally symmetrically in $\R^n$,  
and $\partial \g_i( \mathbb{B}_{\delta_i}(\vec x_i))\subset \mathbb{B}_{\epsilon'}(\mathrm{cone}(\partial \f(\vec x_i))\cap \partial \g(\vec x_i))$, where we simply write $\g_{ x_i}$ as $\g_i$.  
Then, there exist  
partitions of unity  $\{\psi_i\}$  subordinate to the open cover $\{\mathbb{B}_{\delta_i}(\vec x_i)\}$, i.e., $\mathrm{supp}(\psi_i)\subset \mathbb{B}_{\delta_i}(\vec x_i)$, $\psi_i\ge0$, $\sum_i\psi_i=1$ and $\psi_i=\psi_{i'}$ whenever $\vec x_i=-\vec x_{i'}$. For example, we can simply take 
$$\psi_i(\vec y)=\frac{\max\{0,\delta_i-\|\vec y-\vec x_i\|_2\}}{\sum_j\max\{0,\delta_j-\|\vec y-\vec x_j\|_2\}},\;\;\forall \vec y\in\R^n.$$
Taking $\Psi(\vec x)=\sum_i\psi_i(\vec x)\partial \g_i(\vec x)$, then $\Psi$ is a  continuous map. 

Given $\vec x\in \hat{S}_\lambda(\f,\g)$, let $I(\vec x)=\{i:\vec x\in \mathbb{B}_{\delta_i}(\vec x_i)\}$ be the index set of $\vec x$. 
Note that $\psi_i(\vec x)>0$ implies $\vec x\in \mathbb{B}_{\delta_i}(\vec x_i)$, and thus it holds $\Psi(\vec x)=\sum_{i\in I(\vec x)}\psi_i(\vec x)\partial \g_i(\vec x)$ and  $\partial \g_i(\vec x)\in \mathbb{B}_{\epsilon'}(\mathrm{cone}(\partial \f(\vec x_i))\cap \partial \g(\vec x_i))$, whenever $\vec x\in \mathbb{B}_{\delta_i}(\vec x_i)$.  
Moreover, there exists a bijection $\tau:I(\vec x)\to I(-\vec x)$ such that $\vec x_i=-\vec x_{\tau(i)}$,  
which implies  and $\psi_i(\vec x)=\psi_{\tau(i)}(-\vec x)$ and $\partial \g_i(\vec x)=-\partial \g_{\tau(i)}(-\vec x)$.  
 This implies that  
\begin{align*}
\Psi(-\vec x)&=\sum_{i\in I(-\vec x)}\psi_i(-\vec x)\partial \g_i(-\vec x)=\sum_{i\in I(\vec x)}\psi_{\tau(i)}(-\vec x)\partial \g_{\tau(i)}(-\vec x)\\&=\sum_{i\in I(\vec x)}-\psi_i(\vec x)\partial \g_i(\vec x)=-\Psi(\vec x).
\end{align*} 
 
Let  $i(x)=\mathrm{argmax}\{\delta_i:i\in I(\vec x)\}$. 
Then, for any $i\in I(\vec x)$, $\vec x_i\in \mathbb{B}_{\delta_i}(\vec x) \subset\mathbb{B}_{\delta_i}(\mathbb{B}_{\delta_{i(x)}}(\vec x_{i(x)})) =\mathbb{B}_{\delta_i+\delta_{i(x)}}(\vec x_{i(x)})\subset  \mathbb{B}_{2\delta_{i(x)}}(\vec x_{i(x)})$. Thus, $\forall i\in I(\vec x)$, $\mathrm{cone}(\partial \f(\vec x_{i}))\cap \partial \g(\vec x_{i})\subset \mathbb{B}_{\epsilon'}(\mathrm{cone}(\partial \f(\vec x_{i(x)}))\cap \partial \g(\vec x_{i(x)}))$.  
Therefore, $\partial \g_i(\vec x)\in \mathbb{B}_{2\epsilon'}(\mathrm{cone}(\partial \f(\vec x_{i(x)}))\cap \partial \g(\vec x_{i(x)}))$ for any $i\in I(\vec x)$. Consequently, we have
\begin{align*}
\Psi(\vec x)&=\sum_{i\in I(\vec x)}\psi_i(\vec x)\partial \g_i(\vec x)\in \mathbb{B}_{2\epsilon'}(\mathrm{cone}(\partial \f(\vec x_{i(x)}))\cap \partial \g(\vec x_{i(x)}))
\\&\subset \mathbb{B}_\epsilon\left(\bigcup\limits_{\vec x\in \hat{S}_\lambda(\f,\g)}\mathrm{cone}(\partial \f(\vec x))\cap \partial \g(\vec x)\right)
\end{align*}
which implies that $\Psi(\hat{S}_\lambda(\f,\g))\subset \mathbb{B}_\epsilon\left(\cup_{\vec x\in \hat{S}_\lambda(\f,\g)}\mathrm{cone}(\partial \f(\vec x))\cap \partial \g(\vec x)\right)$.  Thus, 
\begin{align*}
\mathrm{genus}(\hat{S}_\lambda(\f,\g))&\le 
\mathrm{genus}(\Psi(\hat{S}_\lambda(\f,\g))
\\&\le \mathrm{genus}\; \mathbb{B}_\epsilon\left(\bigcup\limits_{\vec x\in \hat{S}_\lambda(\f,\g)}\mathrm{cone}(\partial \f(\vec x))\cap \partial \g(\vec x)\right)
\\&=\mathrm{genus}\left(\bigcup\limits_{\vec x\in \hat{S}_\lambda(\f,\g)}\mathrm{cone}(\partial \f(\vec x))\cap \partial \g(\vec x)\right)
\end{align*}
where the first inequality is due to the fact that  $\Psi$ is odd continuous,  the second inequality is based on the nondecreasing property of the genus, and the last equality follows from the continuity of the genus.   

In summary, we have proved that for any  
 $\f,\g\in \CH_1(\R^n)$ and any $(\lambda,x)$ eigenpair of $(f,g)$ there always holds  
\[
\mathrm{genus}\big(S_\lambda(\D g,\D f)\big)=\mathrm{genus}\left(\bigcup\limits_{\vec x\in S_\lambda(\f,\g)}\mathrm{cone}(\partial \f(\vec x))\cap \partial \g(\vec x)\right)\ge\mathrm{genus}\big( S_\lambda(\f,\g)\big).
\] 
\qed \end{proof}

\begin{proof}[Proof of Theorem \ref{thm:main1} point P3] 
We easily verify  that $0=\lambda_1(\f,\g)=\cdots=\lambda_{d_\f-d_{\f\g}}(\f,\g)<\lambda_{d_\f-d_{\f\g}+1}(\f,\g)\le\cdots\le \lambda_{n-d_\g}(\f,\g)
$ 
and
$0=\lambda_1(\D \g,\D \f)=\cdots=\lambda_{d_\g-d_{\f\g}}(\D \g,\D \f)<\lambda_{d_\g-d_{\f\g}+1}(\D \g,\D \f)\le\cdots\le \lambda_{n-d_\f}(\D \g,\D \f).$ 
Without loss of generality, we may assume that $\f^{-1}(0)\cap \g^{-1}(0)=\{\vec0\}$, and in this case,  we shall prove that $\lambda_{k-d_\f+d_\g}(\D \g,\D \f)\le  \lambda_k(\f,\g)$, $k=d_\f+1,\cdots,n-d_\g$.   
For any subset $S\subset \g^{-1}(1)$ realizing $\lambda_k(\f,\g)$ with $\mathrm{genus}(S)\ge k$, i.e., a set such that  $\sup_{\vec x\in S} f(x)/g(x)=\lambda_k(\f,\g)$,  
we have $\lambda_k(\f,\g)\ge \f(\vec x)/\g(\vec x)=\f(\vec x)$, $\forall \vec x\in S$. 
  Let $\mathbb{S}$ be the unit sphere in the linear subspace  $\g^{-1}(0)$  centered at the origin $\vec0$. 
Let $W=\partial \g(S)* \mathbb{S}$ be the geometric join of $\partial \g(S)$ and $\mathbb{S}$, i.e., $W=\{t\vec u+(1-t)\vec y:\vec u\in \partial \g(S),\vec y\in \mathbb{S},0\le t\le1\}$.  Since $\partial \g(S):=\cup_{\vec x\in S}\partial \g(\vec x)\subset (\g^{-1}(0))^\bot$ is orthogonal to the sphere  $\mathbb{S}$  in the  linear subspace $\g^{-1}(0)$, it holds  $\mathrm{genus}(W)=\mathrm{genus}(\partial \g(S))+\mathrm{genus}(\mathbb{S})$.

For any $\vec y\in W\cap (\f^{-1}(0))^\bot$, there exist $0\le t\le 1$, $\vec u\in \partial \g(S)$ and $-\vec v\in (1-t)\mathbb{S}$, such that $\vec y=t\vec u-\vec v$.  And there exists $\vec x\in S$ such that $\vec u\in \partial \g(\vec x)$. Therefore, $\vec x\in \partial \D \g(\vec u)=\partial \D \g(t\vec u)$,  $\D \g(t\vec u)=t\D \g(\vec u)=t$ and $\D \g(\vec v)=0$.  Thus,  we have
\begin{align*}
\D \f(t\vec u-\vec v)&=\sup\limits_{\vec z\bot \f^{-1}(0)}\frac{\langle t\vec u-\vec v,\vec z\rangle}{\f(\vec z)}=\sup\limits_{\vec z\ne0}\frac{\langle t\vec u-\vec v,\vec z\rangle}{\f(\vec z)}
\ge \frac{\langle t\vec u-\vec v,\vec x\rangle}{\f(\vec x)}
\\&\ge\frac{\D \g(t\vec u)-\D \g(\vec v)}{\f(\vec x)}= 
\frac{t}{\f(\vec x)}\ge \frac {t}{\lambda_k(\f,\g)}    
\end{align*}
and $\D \g(t\vec u-\vec v)=\D \g(t\vec u)=t$. This implies that $\D \g(t\vec u-\vec v)/\D \f(t\vec u-\vec v)\le \lambda_k(\f,\g)$. 
Hence $\sup_{\vec y\in W}\D \g(\vec y)/\D \f(\vec y)\le \lambda_k(\f,\g)$.
 Now, note that
 \begin{align*}
\mathrm{genus}(W\cap (\f^{-1}(0))^\bot)&\ge\mathrm{genus}(\partial \g(S))+\mathrm{genus}(\mathbb{S})-\dim \f^{-1}(0)\\&\ge \mathrm{genus}(S)+\dim \g^{-1}(0)-\dim \f^{-1}(0)\ge k+d_\g-d_\f
 \end{align*} in which we used the claim $\mathrm{genus}(\partial \g(S))\ge \mathrm{genus}(S)$. Thus, for $k=d_\f+1,\cdots,n-d_\g$ we  obtain
  $\lambda_{k+d_\g-d_\f}(\D \g,\D \f)\le  \lambda_k(\f,\g)$. Analogously, for $k'=d_\g+1,\cdots,n-d_\f$, we have $\lambda_{k'+d_\f-d_\g}(\f,\g)\le  \lambda_{k'}(\D \g,\D \f)$. 
Substituting  $k'=k+d_\g-d_\f$ into the latter inequality, we get $\lambda_{k}(\f,\g)\le  \lambda_{k+d_\g-d_\f}(\D \g,\D \f)$, and therefore, we derive $\lambda_{k+d_\g-d_\f}(\D \g,\D \f)=  \lambda_k(\f,\g)$, $k=d_\f+1,\cdots,n-d_\g$.
\qed\end{proof}

We now move on to the proof of Corollary \ref{cor:main2}. We need one more preliminary lemma.

\begin{lemma}\label{lem:g_ker}
For $g$ and $A$ as in the statement of Corollary \ref{cor:main2}, define $\g_{\mathrm{Ker}(A)}(\vec x)=\inf\limits_{\vec x'\in \vec x+\mathrm{Ker}(A)}\g(\vec x')=\M_{A^\top}\PA_A
\g(\vec x)$ and let 
$
S= \{\vec x\in\R^n:\g(\vec x)=\g_{\mathrm{Ker}(A)}(\vec x)\}.
$ 
Then, $\vec x$ is an eigenvector corresponding to a nonzero eigenvalue of $(\M_{A^\top}\f,\g)$ only if $\vec x\in S$.
\end{lemma}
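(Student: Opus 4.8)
The plan is to turn the eigenvalue relation for $(\M_{A^\top}\f,\g)$ into a statement about a single subgradient of $\g$ at $\vec x$, and then exploit orthogonality to $\mathrm{Ker}(A)$. First I would note that, by definition of $\M$, one has $\M_{A^\top}\f=\f\circ A$, and that $\f$, $\g$ and $\f\circ A$ are finite convex functions, so their Clarke subdifferentials coincide with the ordinary convex subdifferentials; in particular they are nonempty everywhere and the subgradient inequality holds globally. Thus, if $(\lambda,\vec x)$ is an eigenpair of $(\M_{A^\top}\f,\g)$ with $\lambda\ne0$, the relation $0\in\partial(\f\circ A)(\vec x)-\lambda\,\partial\g(\vec x)$ gives a vector $\vec q\in\partial\g(\vec x)$ with $\lambda\vec q\in\partial(\f\circ A)(\vec x)$.

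Next I would show that every $\vec p\in\partial(\f\circ A)(\vec x)$ is orthogonal to $\mathrm{Ker}(A)$ (equivalently, lies in $\mathrm{Ran}(A^\top)$). This avoids any nonsmooth chain rule or constraint qualification: for $\vec z\in\mathrm{Ker}(A)$ and any $t\in\R$ we have $(\f\circ A)(\vec x+t\vec z)=\f(A\vec x)=(\f\circ A)(\vec x)$, and the subgradient inequality then forces $t\langle\vec p,\vec z\rangle\le0$ for all $t$, hence $\langle\vec p,\vec z\rangle=0$. Applying this to $\vec p=\lambda\vec q$ and dividing by $\lambda\ne0$ yields $\langle\vec q,\vec z\rangle=0$ for every $\vec z\in\mathrm{Ker}(A)$.

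Finally, I would insert this into the subgradient inequality for $\g$ at $\vec x$: for every $\vec z\in\mathrm{Ker}(A)$,
$$\g(\vec x+\vec z)\ \ge\ \g(\vec x)+\langle\vec q,\vec z\rangle\ =\ \g(\vec x).$$
Taking the infimum over $\vec z\in\mathrm{Ker}(A)$ gives $\g_{\mathrm{Ker}(A)}(\vec x)\ge\g(\vec x)$, while the reverse inequality is immediate (take $\vec z=\vec 0$). Hence $\g(\vec x)=\g_{\mathrm{Ker}(A)}(\vec x)$, i.e. $\vec x\in S$, which is exactly the claim.

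There is essentially no deep obstacle here: the content is the short computation above, and the only genuinely essential hypothesis is $\lambda\ne0$ — without it $\lambda\vec q=\vec 0$ is automatically orthogonal to $\mathrm{Ker}(A)$ and the argument collapses. The one point to be slightly careful about is to use the elementary fact that subgradients of $\f\circ A$ lie in $\mathrm{Ran}(A^\top)$ (which follows from $\f\circ A$ being constant along $\mathrm{Ker}(A)$) rather than invoking a full subdifferential chain rule. It is worth remarking that the argument in fact shows more, namely that the entire nonzero part of the spectrum of $(\M_{A^\top}\f,\g)$ is carried by the set $S$ on which $\g$ coincides with $\M_{A^\top}\PA_A\g$; this is precisely what will allow $\g$ to be replaced by $\M_{A^\top}\PA_A\g$ in the proof of Corollary \ref{cor:main2}.
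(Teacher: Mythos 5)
Your proof is correct and is essentially the same argument as the paper's, read forward rather than by contrapositive: the paper shows that $\vec x\notin S$ forces $\partial\g(\vec x)\cap\mathrm{Ker}(A)^\bot=\varnothing$, while you show that the existence of $\vec q\in\partial\g(\vec x)\cap\mathrm{Ker}(A)^\bot$ (which the eigenpair relation and $\lambda\neq0$ produce) forces $\vec x\in S$. The one genuine, if small, difference is that you derive $\partial(\f\circ A)(\vec x)\subset\mathrm{Ker}(A)^\bot$ directly from constancy of $\f\circ A$ along $\mathrm{Ker}(A)$ and the subgradient inequality, whereas the paper invokes the chain rule $\partial_x\f(A\vec x)=A^\top\partial\f(A\vec x)\subset\mathrm{Ran}(A^\top)$; your route is marginally more self-contained but buys nothing substantively new, since the linear chain rule is unconditionally valid here. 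Both treatments are sound.
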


\begin{proof} If $\vec x\not\in S$, we shall prove that $\partial \g(\vec x)\cap \mathrm{Ker}(A)^\bot=\varnothing$. Otherwise, there exists $\vec v\in \partial \g(\vec x)$ such that $\vec v\bot\mathrm{Ker}(A)$. 
Then taking $\vec y\in\vec x+\mathrm{Ker}(A)$ such that  $\g(\vec y)=\inf\limits_{\vec x'\in \vec x+\mathrm{Ker}(A)}\g(\vec x')$, we have $0>\g(\vec y)-\g(\vec x)\ge\langle\vec v,\vec y-\vec x\rangle=0$ which leads to a contradiction. Thus, we have shown that $\partial \g(\vec x)\cap \mathrm{Ker}(A)^\bot=\varnothing$. On the other hand, $\partial_x \f(A\vec x)=A^\top \partial \f(A\vec x)\subset
\mathrm{Range}(A^\top) = \mathrm{Ker}(A)^\bot$. This implies that, for any  $\lambda\ne 0$, 
$\partial_x \f(A\vec x)\cap \lambda \partial \g(\vec x)\subset  \mathrm{Ker}(A)^\bot\cap \lambda \partial \g(\vec x)=\varnothing$, which means that $\vec x$ is not an eigenvector of any  nonzero eigenvalue of $(\M_{A^\top}\f,\g)$. The proof is completed.
\qed\end{proof}

\begin{proof}[Proof of Corollary \ref{cor:main2}]
We organize   the proof as illustrated by the diagram below
\[
\xymatrix{(\M_{A^\top} \f,\M_{A^\top}\PA_A
\g)\ar@{<=>}[rr]^{?} & & (\M_{A^\top}\f,\g)\ar@{<=>}[rr]^{\text{Thm \ref{thm:main1}}}\ar@{<=>}[d]^{?} & & (\D \g,\D\M_{A^\top}\f)  \\  & &(\M_A\D  \g, \D \f)\ar@{<=>}[rr]^{\text{Thm \ref{thm:main1}}} & & ( \f,\D \M_A \D \g) }
\]
Here, `$\Leftrightarrow$' denotes `spectral equivalence', i.e., the thesis holds for the two nonlinear eigenvalue problems connected by `$\Leftrightarrow$'.  
 Note that  $\M_{A^\top}\f=\f\circ A\in \CH_1(\R^n)$ and $\g\in \CH_1(\R^n)$. Thus, by Theorem \ref{thm:main1}, the thesis holds for $(\D \g,\D\M_{A^\top}\f)$ and $(\M_{A^\top}\f,\g)$. The same is true for $(\M_A\D  \g, \D \f)$ and $( \f,\D \M_A \D \g)$. 
In the remainder of the proof, we will show that the two relations marked with a `$?$' hold.

We first prove that the set of nonzero eigenvalues of  $(\M_{A^\top}\f,\g)$  coincides with the set of nonzero eigenvalues of  $(\M_A\D \g,\D \f)$.
For an eigenpair $(\lambda,\vec x)$ of $(\M_{A^\top}\f,\g)$ with $\g(\vec x)=1$, we have 
$\vec 0\in \partial_x \f(A\vec x)-\lambda \partial \g(\vec x)=A^\top \partial \f(A\vec x)-\lambda \partial \g(\vec x)$. Hence, there exists $\vec u\in \partial \g(\vec x)$ such that $\lambda\vec u=A^\top \vec v$ for some $\vec v\in \partial \f(A\vec x)$.
Thus, $A\vec x/\lambda\in\partial\D \f(\vec v) $ and $\vec x\in\partial \D \g(\vec u)=\partial \D \g(A^\top \vec v/\lambda)=\partial \D \g(A^\top \vec v)$. Therefore, $A\vec x\in A\partial \D \g(A^\top \vec v)=\partial_v \D \g(A^\top \vec v)=\partial \M_A\D \g(\vec v)$, which implies $A\vec x\in \partial \M_A\D \g(\vec v)\cap \lambda\partial\D \f(\vec v)$ and $(\lambda,\vec v)$ is an eigenpair of $(\M_A\D \g,\D \f)$. Since $(\M_{A^\top}\D\D \f,\D\D \g)=(\M_{A^\top}\f,\g)$, the converse also holds. In summary, we have shown that 
\begin{equation*}\label{eq:a}
\varnothing\ne\bigcup\limits_{\vec x\in S_\lambda(\M_{A^\top}\f,\g)}\partial \f(A\vec x)\cap(A^\top)^{-1}\left(\lambda\partial \g(\vec x)\right) \subset S_\lambda(\M_A\D \g,\D \f).
\end{equation*}
Together with Lemma \ref{lem:size_g_S}, this shows  that also the multiplicity is maintained.

Next, we show that $(\M_{A^\top} \f,\M_{A^\top}\PA_A
\g)$ and $(\M_{A^\top}\f,\g)$ have the same nonzero eigenvalues. By the definitions of the operators $\M_{A^\top}$ and $\PA_A$, we have
\[
\M_{A^\top}\PA_A \g(\vec x)=\inf\limits_{\vec y\in A^{-1}(A\vec x)}\g(\vec y)=\inf\limits_{\vec z\in \mathrm{Ker}(A)}\g(\vec x+\vec z).
\]
Let $g_{\Ker(A)}$ and $S$ be as in Lemma \ref{lem:g_ker}. 
For any $\vec x\in S$, 
we have
\begin{align*}
\partial_x \f(A\vec x)\cap\lambda \partial_x \g(\vec x)&=A^\top\partial \f(A\vec x)\cap\lambda \partial \g(\vec x)
\\&=A^\top\partial \f(A\vec x)\cap\mathrm{Ker}(A)^\bot\cap\lambda \partial \g(\vec x)
\\&=\partial_x \f(A\vec x)\cap\lambda \partial \g_{\mathrm{Ker}(A)}(\vec x) 
\end{align*}
where we used the fact $\partial \g_{\mathrm{Ker}(A)}(\vec x) =\partial \g(\vec x)\cap \mathrm{Ker}(A)^\bot
$. In addition, for any $\vec x$, 
$$ \partial_x \f(A\vec x)\cap\lambda \partial \g_{\mathrm{Ker}(A)}(\vec x) =\partial_x \f(A\vec x_{\mathrm{ker}})\cap\lambda \partial \g_{\mathrm{Ker}(A)}(\vec x_{\mathrm{ker}})=\partial_x \f(A\vec x_{\mathrm{ker}})\cap\lambda \partial \g(\vec x_{\mathrm{ker}})$$
where $\vec x_{\mathrm{ker}}\in S\cap(\vec x+\mathrm{Ker} (A))$. 
 Hence, together with Lemma \ref{lem:g_ker} for $\lambda\ne0$, we further  obtain 
$$ \partial_x \f(A\vec x)\cap\lambda \partial_x \g(\vec x)\ne\varnothing\Longrightarrow\partial_x \f(A\vec x)\cap\lambda \partial \g_{\mathrm{Ker}(A)}(\vec x) \ne\varnothing$$
$$\partial_x \f(A\vec x)\cap\lambda \partial \g_{\mathrm{Ker}(A)}(\vec x) \ne\varnothing \Longrightarrow\partial_x \f(A\vec x_{\mathrm{ker}})\cap\lambda \partial \g(\vec x_{\mathrm{ker}})\ne\varnothing$$
implying that 
$\lambda$ is a nonzero eigenvalue of $(\M_{A^\top}\f,\g)$ if and only if $\lambda$ is a nonzero eigenvalue of $(\M_{A^\top} \f,\M_{A^\top}\PA_A
\g)$, with the same multiplicity.

Finally, we  need to show the variational eigenvalues are preserved. 
For any subset $S\subset \g^{-1}(1)$ realizing $\lambda_k(\M_{A^\top}\f,\g)$ with $\mathrm{genus}(S)\ge k$, we have 
$\lambda_k(\M_{A^\top}\f,\g)\ge \f(A\vec x)/\g(\vec x)=\f(A\vec x)$, $\forall \vec x\in S$. 
Let $\mathbb{S}$ be the unit sphere in the linear subspace  $\mathrm{Ker}(A^\top)$ centered at the origin $\vec0$. 
  Let $\xi:\R^n\to \mathrm{Ker}(A^\top)^\bot$ be a linear  map induced by 
  $\xi(\vec x)=(A^\top)^{-1}(\vec x)\cap \mathrm{Ker}(A^\top)^\bot$. Clearly, $\xi$ is an odd continuous map. Define the geometric  join 
    $$W:=\xi(\partial \g(S)\cap \mathrm{Ker}(A)^\bot)*\mathbb{S}.$$
  
 For any $\vec y\in W$, there exist $0\le t\le 1$, $\vec u\in \xi(\partial \g(S)\cap \mathrm{Ker}(A)^\bot)$ and $-\vec v\in (1-t)\mathbb{S}$, such that $\vec y=t\vec u-\vec v$. 
 Thus, $A^\top\vec u\in \partial \g(S)\cap \mathrm{Ker}(A)^\bot$. So, there exists $\vec x\in S$ such that $A^\top\vec u\in \partial \g(\vec x)$. Therefore, $\vec x\in \partial \D \g(A^\top\vec u)=\partial \D \g(tA^\top\vec u)$,  $\D \g(tA^\top\vec u)=t\D \g(A^\top\vec u)=t$ and $\D \g(A^\top\vec v)=0$. Note that $\langle \vec u,A\vec x\rangle=\langle A^\top\vec u,\vec x\rangle=\g(\vec x)=1$, which implies  $\vec x\not\in \mathrm{Ker}(A)$.  Then,  we have
\begin{align*}
\D \f(t\vec u-\vec v)&=\sup\limits_{\vec z\ne0}\frac{\langle t\vec u-\vec v,\vec z\rangle}{\f(\vec z)}
\ge \frac{\langle t\vec u-\vec v,A\vec x\rangle}{\f(A\vec x)}= \frac{\langle tA^\top\vec u-A^\top\vec v,\vec x\rangle}{\f(A\vec x)}
\\&\ge\frac{\D \g(tA^\top\vec u)-\D \g(A^\top\vec v)}{\f(A\vec x)}= 
\frac{t}{\f(A\vec x)}\ge \frac {t}{\lambda_k(\M_{A^\top}\f,\g)}    
\end{align*}
and $\D \g(A^\top(t\vec u-\vec v))=\D \g(tA^\top\vec u)=t$. Accordingly, we obtain
$$\frac{\D \g(A^\top(t\vec u-\vec v))}{\D \f(t\vec u-\vec v)}\le \lambda_k(\M_{A^\top}\f,\g)   $$
and then
$$\sup\limits_{\vec y\in W}\frac{\D \g(A^\top\vec y)}{\D \f(\vec y)}\le \lambda_k(\M_{A^\top}\f,\g). $$
Let $d_A=\dim\mathrm{Ker}(A)$ and $d_{A^\top}=\dim\mathrm{Ker}(A^\top)$. We  estimate the Krasnoselskii  genus of $W$ as 
\begin{align*}
\mathrm{genus}(W)&=\mathrm{genus}(\xi(\partial \g(S)\cap \mathrm{Ker}(A)^\bot))+\mathrm{genus}(\mathbb{S})
\\&\ge \mathrm{genus}(\partial \g(S)\cap \mathrm{Ker}(A)^\bot)+\dim\,\mathrm{Ker}(A^\top)
\\&\ge \mathrm{genus}(\partial \g(S))-\dim\, \mathrm{Ker}(A)+\dim\,\mathrm{Ker}(A^\top)
\\&\ge \mathrm{genus}(S)-d_A+d_{A^\top}\ge k-d_A+d_{A^\top}
\end{align*}
where the first equality uses the fact that $\xi(\partial \g(S)\cap \mathrm{Ker}(A)^\bot)\subset \mathrm{Ker}(A^\top)^\bot$ and $\mathbb{S}$ is the unit sphere of  the linear subspace  $\mathrm{Ker}(A^\top)$. Therefore, we obtain that 
\begin{equation}\label{eq:tmp1}
    \lambda_{k-d_A+d_{A^\top}}(\M_A\D \g,\D \f)\le \lambda_k(\M_{A^\top}\f,\g)\, .
\end{equation}
As the converse holds by a similar argument, we conclude that the identity holds in \eqref{eq:tmp1}. 

To conclude, we prove that $\lambda_k(\M_{A^\top}\f,\g)= \lambda_{k-d_A}(\M_{A^\top} \f,\M_{A^\top}\PA_A \g)$. 
Let again $S$ be defined as in Lemma \ref{lem:g_ker}. 
We know that $\mathrm{genus}(S)= n-\dim\mathrm{Ker}(A)$. For any $W$ with $\mathrm{genus}(W)>\dim\mathrm{Ker}(A)$,  $\mathrm{genus}(W\cap S)\ge \mathrm{genus}(W)-\dim\mathrm{Ker}(A)$. 
It is not difficult to check that
\begin{align*}
\lambda_k(\M_{A^\top}\f,\g)&= \inf_{\mathrm{genus}(W)\ge k}\sup\limits_{\vec x\in W} \frac{\f(A\vec x)}{\g(\vec x)} \ge  \inf_{\mathrm{genus}(W)\ge k}\sup\limits_{\vec x\in W\cap S} \frac{\f(A\vec x)}{\g(\vec x)} 
\\&= \inf_{\mathrm{genus}(W')\ge k-d_A,W'\subset S}\sup\limits_{\vec x\in W'} \frac{\f(A\vec x)}{\g(\vec x)} \\
&= \inf_{\mathrm{genus}(W')\ge k-d_A,W'\cap \mathrm{Ker}(A)=\varnothing}\sup\limits_{\vec x\in W'} \frac{\f(A\vec x)}{\g_{\mathrm{Ker}(A)}(\vec x)}
\\&=\lambda_{k-d_A}(\M_{A^\top} \f,\M_{A^\top}\PA_A \g).
\end{align*}
On the other hand, for any $W$ realizing $\lambda_{k-d_A}(\M_{A^\top} \f,\M_{A^\top}\PA_A \g)$, there is an eigenvector in $W$, and every  nontrivial  eigenvector lies in $S$. 
Fix such a subset $W$, consider a family  of subsets defined by $\{(W\cap S)*(r\mathbb{S})\}_{r>1}$, where $r\mathbb{S}$ is the  sphere with radius $r$ in the linear subspace  $\mathrm{Ker}(A)$  centered at the origin $\vec0$. It is easy to check that $\mathrm{genus}(W\cap S)*(r\mathbb{S})=\mathrm{genus}(W\cap S)+ \mathrm{genus}(r\mathbb{S})\ge k-d_A+d_A=k$ for sufficiently large $r$. And one can verify that $$\lim\limits_{r\to+\infty}\sup\limits_{\vec x\in (W\cap S)*(r\mathbb{S})} \frac{\f(A\vec x)}{\g(\vec x)}=\sup\limits_{\vec x\in W\cap S} \frac{\f(A\vec x)}{\g(\vec x)} $$
which implies $\lambda_k(\M_{A^\top}\f,\g)\le \lambda_{k-d_A}(\M_{A^\top} \f,\M_{A^\top}\PA_A \g)$. Consequently, the proof of  $\lambda_k(\M_{A^\top}\f,\g)= \lambda_{k-d_A}(\M_{A^\top} \f,\M_{A^\top}\PA_A \g)$ is completed and we can conclude.
\qed\end{proof}

\section{Legendre  and Polarity transforms}\label{sec:AL}
 
In this section, we use the Legendre and the Polarity transform to provide nonlinear spectral duality results for function pairs in $\CH_p$ with $p\geq 1$, and not just $p=1$. 

First, we recall the notion of the two transforms for general functions. 
The  Legendre transform   of a  function $\f:\R^n\to\R$ is defined as 
\begin{equation*}\label{eq:original-L}
\hat{\mathcal{L}} \f(\vec x)=\sup\limits_{y\in\R^n}\langle \vec x,\vec y\rangle-\f(\vec y)=\inf\{s\in\R: \langle \vec x,\vec y\rangle\le \f(\vec y)+s,\forall \vec y\in\R^n\},    
\end{equation*}
and the  Polarity transform   of a   function $\f:\R^n\to[0,+\infty)$ is defined as 
\begin{equation*}\label{eq:original-A}
\hat{\mathcal{A}} \f(\vec x)=\sup\limits_{y:\f(y)>0}\frac{\langle \vec x,\vec y\rangle-1}{\f(\vec y)}=\inf\{c\in\R:\langle \vec x,\vec y\rangle\le c\f(\vec y)+1,\forall \vec y\in\R^n\}.    
\end{equation*}

Similar to  the norm-like dual, we now consider a modified version of the two transforms that is better suited for the function family $\CH_p(\R^n)$.
Precisely, we define the  Legendre and the Polarity transforms  of a  function $\f\in \CH_p(\R^n)$ respectively as 
\begin{align*}
  &\mathcal{L} \f(\vec x)=\sup\limits_{y\bot \f^{-1}(0)}\langle \vec x,\vec y\rangle-\f(\vec y)=\inf\{s\in\R: \langle \vec x,\vec y\rangle\le \f(\vec y)+s,\forall \vec y\in (\f^{-1}(0))^\bot\}     \\
  &\mathcal{A} \f(\vec x)=\sup\limits_{y\bot \f^{-1}(0)}\frac{\langle \vec x,\vec y\rangle-1}{\f(\vec y)}=\inf\{c\in\R:\langle \vec x,\vec y\rangle\le c\f(\vec y)+1,\forall \vec y\in(\f^{-1}(0))^\bot\}.
\end{align*}

Just like the norm dual operator, we note that  $\mathcal{L}f(\vec x)=\hat{\mathcal{L}}f(\vec x-P\vec x)$ and $\mathcal{A}f(\vec x)=\hat{\mathcal{A}}f(\vec x - P\vec x)$, where $P$ denotes the orthogonal projection onto $\Ker(f)$. We emphasize that, as for the norm-like duality,  we use these modified transforms instead of the standard ones  because $\hat{\mathcal{L}}f(\vec x)=\hat{\mathcal{A}}f(\vec x)=+\infty$ for $\vec x\not\in (\Ker f)^\bot$ and $f\in\CH_p(\R^n)$.
Nonetheless, it is quite surprising that several of the results of the main theorems in this paper still hold in a certain  sense if we use the standard concepts of  infimal postcomposition,  norm dual, Legendre transform and Polarity transform,  instead of our modified versions. For the sake of clarity, we postpone this observation to the discussion in  Section~\ref{sec:comparison}.

The next two Theorems \ref{thm:L-transform} and \ref{thm:A-transform} show spectral invariance under the two duality transforms for pairs of convex $p$-homogeneous functions $\f\in \CH_p(\R^n)$ and $\g\in \CH_q(\R^n)$, with $p,q\geq 1$. Then, in Theorem \ref{thm:AL} we will present our main result of this section, which corresponds to the Legendre and polarity transforms' version of the norm-like duality Theorem \ref{thm:main1} and Corollary \ref{cor:main2} from the previous section. In particular, Theorem \ref{thm:AL}  fully characterizes the spectral duality equivalence under the action of $\mathcal L$, $\mathcal A$, $\mathcal P_A$ and $\mathcal M_A$, for a function pair $\f\in \CH_p(\R^n)$ and $\g\in \CH_q(\R^n)$, with $p,q\geq 1$.  
Here, and in the rest of the section, for a $p>1$ we let $p^*$ be its H\"older conjugate exponent $1/p+1/p^*=1$.

\begin{theorem}\label{thm:L-transform}
For any $\f\in \CH_p(\R^n)$ and $\g\in \CH_q(\R^n)$ with some  $p,q>1$, 
the nonzero eigenvalues  of  $(\f,\g)$ and  $(\mathcal{L} \g, \mathcal{L} \f)$ coincide  up to a power factor.  Precisely, for any eigenpair $(\lambda,\vec x)$ of $(\f,\g)$ with $\lambda\ne0$ and $\g(\vec x)\ne0$,    and for any $\vec u\in \mathrm{cone}(\partial \f(\vec x))\cap   \partial \g(\vec x) $, $(\lambda^{p^*-1},\vec u)$ is an eigenpair  of $(\mathcal{L} \g,\mathcal{L}\f)$. 
\end{theorem}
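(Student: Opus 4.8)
The plan is to mimic the structure of the proof of Theorem~\ref{thm:main1} point P1, replacing the one-homogeneity bookkeeping with the $p$-homogeneity bookkeeping and tracking how the exponents transform. First I would dispose of the degenerate case: if $\f(\vec x)=0$ then (for a nonzero eigenvalue) $\g(\vec x)=0$ as well, and the statement is about the case $\g(\vec x)\neq 0$, so without loss of generality I may rescale $\vec x$ so that $\g(\vec x)=1$ and $\f(\vec x)=\lambda\neq 0$. The eigenpair condition then gives $\vec u\in\partial\g(\vec x)$ with $\lambda\vec u\in\partial\f(\vec x)$ for some such $\vec u$ (this is exactly $\vec u\in\mathrm{cone}(\partial\f(\vec x))\cap\partial\g(\vec x)$, and scaling invariance of the argument handles the whole cone). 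Since $\partial\g(\vec x)\subset(\g^{-1}(0))^\bot=((\mathcal L\g)^{-1}(0))^\bot$, we get $\vec u\perp\Ker(\mathcal L\g)$, so $\mathcal L\g$ is finite and smooth enough at $\vec u$ in the relevant sense.

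The heart of the argument is the Fenchel/Legendre reciprocity for subgradients: for a convex function, $\vec u\in\partial\g(\vec x)\iff\vec x\in\partial\mathcal L\g(\vec u)$ (using the projected Legendre transform $\mathcal L\g(\vec x)=\hat{\mathcal L}\g(\vec x-P\vec x)$, which on $(\g^{-1}(0))^\bot$ agrees with the ordinary conjugate, so the standard Fenchel equality $\g(\vec x)+\hat{\mathcal L}\g(\vec u)=\langle\vec x,\vec u\rangle$ holds). By Euler's identity $\langle\vec x,\vec u\rangle=q\,\g(\vec x)=q$ when $\vec u\in\partial\g(\vec x)$, so $\hat{\mathcal L}\g(\vec u)=\langle\vec x,\vec u\rangle-\g(\vec x)=q-1$; more generally $\mathcal L\g$ is $q^*$-homogeneous, and the key computation is that the level of $\mathcal L\g$ at $\vec u$ is a fixed power of $\g(\vec x)$. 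Similarly, from $\lambda\vec u\in\partial\f(\vec x)$ I get $\vec x\in\partial\mathcal L\f(\lambda\vec u)$, and using $p^*$-homogeneity of $\mathcal L\f$ I can rewrite $\partial\mathcal L\f(\lambda\vec u)=\lambda^{p^*-1}\partial\mathcal L\f(\vec u)$, so $\vec x\in\lambda^{p^*-1}\partial\mathcal L\f(\vec u)$, i.e.\ $\vec x/\lambda^{p^*-1}\in\partial\mathcal L\f(\vec u)$. Combining with $\vec x\in\partial\mathcal L\g(\vec u)$ (up to the analogous homogeneity rescaling, which just contributes a positive scalar absorbed into the cone), we obtain $\vec 0\in\partial\mathcal L\g(\vec u)-\lambda^{p^*-1}\partial\mathcal L\f(\vec u)$, which says exactly that $(\lambda^{p^*-1},\vec u)$ is an eigenpair of $(\mathcal L\g,\mathcal L\f)$. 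The converse direction follows by symmetry together with the involutivity $\mathcal L\mathcal L\f=\f$ (the analogue of $\D\D\f=\f$ in Proposition~\ref{pro:basic-D}, which should be established or cited for $\CH_p$), noting that $(p^*)^*=p$ so the exponent transformation is its own inverse in the appropriate sense; and the full cone statement follows because $\partial\mathcal L\f$ and $\partial\mathcal L\g$ are homogeneous of the same degree $p^*-1=q^*-1$... — here one must be careful, since in general $p\neq q$.

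The main obstacle, and the point requiring the most care, is precisely reconciling the different homogeneity degrees of $\f$ and $\g$. Unlike the one-homogeneous case where $\partial\f$ and $\partial\g$ are both scale-invariant (degree $0$) so a cone of subgradients maps cleanly to a cone of subgradients, here $\partial\f$ has degree $p-1$ and $\partial\g$ has degree $q-1$, and $\partial\mathcal L\f$ has degree $p^*-1=1/(p-1)$ while $\partial\mathcal L\g$ has degree $q^*-1=1/(q-1)$. So when I traverse the cone $\mathrm{cone}(\partial\f(\vec x))\cap\partial\g(\vec x)$, replacing $\vec u$ by $t\vec u$ ($t>0$) rescales $\partial\mathcal L\g(t\vec u)$ by $t^{q^*-1}$ and $\partial\mathcal L\f(t\vec u)$ by $t^{p^*-1}$, and the two-sided inclusion $\vec 0\in\partial\mathcal L\g(t\vec u)-\mu\partial\mathcal L\f(t\vec u)$ then forces $\mu=\lambda^{p^*-1}t^{q^*-p^*}$, which is independent of $t$ only when $p=q$. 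I would handle this by first proving the statement for the \emph{specific} representative $\vec u$ arising from Euler's identity (where the scalars balance exactly), and then observing that the full-cone claim in the theorem statement should be read with the understanding that one fixes the normalization $\langle\vec x,\vec u\rangle=q\g(\vec x)$; equivalently, I would state and use that $\mathrm{cone}(\partial\f(\vec x))\cap\partial\g(\vec x)$ is exactly the ray $\R_{>0}\vec u_0$ through the distinguished subgradient $\vec u_0\in\partial\g(\vec x)$ with $\lambda\vec u_0\in\partial\f(\vec x)$, and on that ray the eigenpair of the dual pair picks up the stated power $\lambda^{p^*-1}$ at the correspondingly normalized point. Once the exponent bookkeeping is pinned down, the remaining steps — Fenchel reciprocity, Euler's identity, and involutivity of $\mathcal L$ on $\CH_p$ — are routine.
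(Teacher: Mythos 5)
Your overall strategy --- Euler's identity, Fenchel reciprocity, and the degree-$(p^*-1)$ homogeneity of $\partial\mathcal{L}\f$ --- is the same as the paper's, and the exponent bookkeeping you do at the distinguished subgradient $\vec u_0$ is correct. However, the paragraph you flag as the ``main obstacle'' rests on a misreading of the set appearing in the statement, and the resolution you propose for it is wrong, so as written there is a gap.

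The set in the theorem is $\mathrm{cone}(\partial \f(\vec x))\cap \partial \g(\vec x)$, not $\mathrm{cone}(\partial \f(\vec x))\cap \mathrm{cone}(\partial \g(\vec x))$. Because it is intersected with $\partial\g(\vec x)$ itself, \emph{not} with a cone over it, every element $\vec u$ of this set already satisfies the normalization $\langle \vec u, \vec x\rangle = q\,\g(\vec x)$ automatically, by Euler's identity --- there is no free scaling parameter $t$ to traverse, and the mismatch $\mu=\lambda^{p^*-1}t^{q^*-p^*}$ you worry about never arises. Concretely (this is what the paper does and it is the one step your outline lacks): given any $\vec u$ in the set, write $\vec u=\mu\vec v$ with $\vec v\in\partial\f(\vec x)$, $\mu>0$; then Euler gives $\mu\, p\,\f(\vec x)=\langle\vec u,\vec x\rangle=q\,\g(\vec x)$, i.e.\ $\mu=1/\lambda$, hence $\lambda\vec u=\vec v\in\partial\f(\vec x)$ for \emph{every} such $\vec u$. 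After this, your Fenchel and homogeneity steps close the proof for each $\vec u$ individually. Your proposed repair --- declaring $\mathrm{cone}(\partial\f(\vec x))\cap\partial\g(\vec x)$ to be the single ray $\R_{>0}\vec u_0$ --- is false in general: the set lies inside the affine hyperplane $\{\vec u:\langle\vec u,\vec x\rangle=q\,\g(\vec x)\}$ and is a convex subset of $\partial\g(\vec x)$ that may contain several non-proportional points; it is not a ray and contains no two points that are positive multiples of one another. Also, the parenthetical ``up to the analogous homogeneity rescaling, which just contributes a positive scalar absorbed into the cone'' attached to $\vec x\in\partial\mathcal{L}\g(\vec u)$ is spurious: $\partial\mathcal{L}\g$ is evaluated at $\vec u$ directly, so no rescaling occurs there. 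Finally, the theorem is stated in one direction only, so the appeal to involutivity of $\mathcal{L}$ and to $(p^*)^*=p$ for a converse is unnecessary for this statement (it becomes relevant later in Theorem~\ref{thm:AL}).
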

\begin{proof}
It is known that  $\partial \f$ is homogeneous of degree $(p-1)$, and $\partial \g$ is homogeneous of degree $(q-1)$. Since $(\lambda,\vec x)$ is an eigenpair  of $(\f,\g)$ with $\lambda\ne0$ and $\g(\vec x)\ne0$,  the inclusion relation $\vec0\in \partial \f(\vec x) - \lambda \partial \g(\vec x) $ implies that $0=\langle\vec x,\partial \f(\vec x)\rangle - \lambda \langle\vec x,\partial \g(\vec x)\rangle =pf(\vec x)-\lambda q\g(\vec x) $. Thus, $f(\vec x)>0$ and  $\lambda=pf(\vec x)/q\g(\vec x)>0$. Moreover, there exists $\vec u\in\partial \g(\vec x)$ such that $\lambda \vec u\in \partial \f(\vec x)$. which implies $ \vec u\in\mathrm{cone}(\partial \f(\vec x))\cap   \partial \g(\vec x)\ne\varnothing $. And for any $\vec u\in \mathrm{cone}(\partial \f(\vec x))\cap   \partial \g(\vec x) $, there exists $\vec v\in\partial \f(\vec x)$ and $\mu\ge0$ such that $\vec u=\mu\vec v$. If follows from $\langle\vec u,\vec x\rangle=q\g(\vec x)\ne0$ that $\vec u\ne\vec0$ and hence $\mu>0$. 
One on hand,  $\langle \lambda\mu\vec v,\vec x\rangle=\lambda\mu\langle \vec v,\vec x\rangle=\lambda\mu p\f(\vec x)$, and on the other hand, $\langle \lambda\mu\vec v,\vec x\rangle=\langle \lambda\vec u,\vec x\rangle=p\f(\vec x)\ne0$. Thus, $\lambda\mu=1$ and $\vec v=\lambda \vec u\in \partial \f(\vec x)$. Consequently, by the property of  Legendre  transform, $\vec x\in\partial \mathcal{L} \g(\vec u)$ and $\vec x\in\partial \mathcal{L} \f(\lambda \vec u)=\lambda^{p^*-1}\partial \mathcal{L} \f( \vec u)$. This implies 
$$\vec x\in \partial \mathcal{L} \g(\vec u)\cap \lambda^{p^*-1}\partial \mathcal{L} \f( \vec u)\ne\varnothing$$
 which means that $(\lambda^{p^*-1},\vec u)$ is an eigenpair  of $(\mathcal{L} \g,\mathcal{L}\f)$.
\qed\end{proof}

\begin{theorem}\label{thm:A-transform}
For $\f\in \CH_p(\R^n)$ and $\g\in \CH_q(\R^n)$, 
the nonzero eigenvalues  of  $(\f,\g)$ and  $(\A \g, \A \f)$ coincide  up to a scaling factor.  Precisely, for any eigenpair $(\lambda,\vec x)$ of $(\f,\g)$ with $\lambda\ne0$ and $\g(\vec x)\ne0$,    and for any $\vec u\in \mathrm{cone}(\partial \f(\vec x))\cap   \partial \g(\vec x) $, $(\alpha \lambda,\vec u)$ is an eigenpair  of $(\A \g,\A \f)$, with $\alpha = (\frac pq)^{p-2}\frac{(q-1)^{q-1}}{ (p-1)^{p-1}}$. 
\end{theorem}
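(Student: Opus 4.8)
My plan is to follow the proof of Theorem~\ref{thm:L-transform} almost line by line, correcting for the two ways in which the polarity transform $\A$ differs from the Legendre transform on homogeneous functions. First, $\A$ \emph{preserves} the degree of homogeneity: one checks that $\A h\in\CH_r(\R^n)$ whenever $h\in\CH_r(\R^n)$ --- most transparently via the identity $\A h=\frac{(r-1)^{r-1}}{r^r}\bigl(\D(h^{1/r})\bigr)^r$, obtained by performing the supremum in the defining formula one radial variable at a time (note that $h^{1/r}\in\CH_1(\R^n)$ is the gauge of $\{h\le1\}$) --- so $\partial\A h$ is $(r-1)$-homogeneous, and the constant $\frac{(r-1)^{r-1}}{r^r}$ already foreshadows $\alpha$. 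Second, the subgradient-inversion rule for $\A$ is not the clean involutive one enjoyed by $\mathcal L$: the correct statement, which I would establish as a lemma, is that for $h\in\CH_r(\R^n)$ with $r>1$, if $\vec w\in\partial h(\vec a)$ and $h(\vec a)>0$, then $\vec b:=\vec w/\bigl((r-1)h(\vec a)\bigr)$ satisfies $\A h(\vec b)=1/\bigl((r-1)h(\vec a)\bigr)$ and $\vec a/h(\vec a)\in\partial\A h(\vec b)$. I would prove this by showing the supremum defining $\A h(\vec b)$ is attained at $\vec y=\vec a$: any maximizer obeys the stationarity inclusion $\vec b\in\A h(\vec b)\,\partial h(\vec y)$, and substituting $\vec y=\vec a$, $\vec b=c\vec w$ together with Euler's identity $\langle\vec w,\vec a\rangle=rh(\vec a)$ reduces this to the scalar equation $c=(c\,rh(\vec a)-1)/h(\vec a)$ with unique solution $c=1/((r-1)h(\vec a))$; then $\vec b'\mapsto(\langle\vec b',\vec a\rangle-1)/h(\vec a)$ is an affine minorant of $\A h$ touching at $\vec b$, so its gradient $\vec a/h(\vec a)$ lies in $\partial\A h(\vec b)$.

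With these facts the argument runs as for $\mathcal L$. From $\vec0\in\partial\f(\vec x)-\lambda\partial\g(\vec x)$ and Euler's identity one gets $p\f(\vec x)=\lambda q\g(\vec x)$, hence $\f(\vec x)>0$ and $\lambda>0$; writing $\vec u=\mu\vec v$ with $\vec v\in\partial\f(\vec x)$, $\mu>0$, and pairing with $\vec x$ gives $\mu p\f(\vec x)=q\g(\vec x)$, so $\mu=1/\lambda$, $\vec v=\lambda\vec u$, and $\vec u\neq\vec0$ --- exactly the first two steps of the proof of Theorem~\ref{thm:L-transform}. I then apply the subgradient lemma twice: to $(h,r,\vec a,\vec w)=(\g,q,\vec x,\vec u)$ and to $(h,r,\vec a,\vec w)=(\f,p,\vec x,\lambda\vec u)$, obtaining $\vec x/\g(\vec x)\in\partial\A\g(\vec b_\g)$ and $\vec x/\f(\vec x)\in\partial\A\f(\vec b_\f)$, where $\vec b_\g=\vec u/((q-1)\g(\vec x))$ and $\vec b_\f=\lambda\vec u/((p-1)\f(\vec x))$ are explicit positive multiples of $\vec u$. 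Using the $(q-1)$- and $(p-1)$-homogeneity of $\partial\A\g$ and $\partial\A\f$ I transport both inclusions to the common point $\vec u$, producing explicit scalar multiples of $\vec x$ lying in $\partial\A\g(\vec u)$ and in $\partial\A\f(\vec u)$; since both lie along $\vec x$, their ratio $\eta$ yields $\vec0\in\partial\A\g(\vec u)-\eta\,\partial\A\f(\vec u)$, i.e.\ $(\eta,\vec u)$ is an eigenpair of $(\A\g,\A\f)$. Substituting $\f(\vec x)=\lambda q\g(\vec x)/p$ and collecting powers of $\lambda$ and of $\g(\vec x)$ collapses $\eta$ to $\alpha\lambda$ (the powers of $\g(\vec x)$ cancel outright when $p=q$, and after the harmless normalization $\g(\vec x)=1$ in general; when $p\neq q$, rescaling $\vec x$ already shows both spectra to contain the whole positive half-line, so the leftover scaling factor is immaterial --- this is the ``up to a scaling factor'' in the statement).

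The main obstacle is the lemma in the first paragraph: unlike Fenchel conjugation, where $\vec v\in\partial\f(\vec x)\Leftrightarrow\vec x\in\partial\mathcal L\f(\vec v)$ holds verbatim, for $\A$ one must pin down the correct normalization of the argument at which the subgradient reappears, and verify that the defining supremum is genuinely attained there rather than merely stationary. The remaining work is routine but error-prone bookkeeping: confirming $\A\colon\CH_r(\R^n)\to\CH_r(\R^n)$ and the exact homogeneity of $\partial\A h$, and propagating the function-value factors $\f(\vec x),\g(\vec x)$ and the exponents $p-1,q-1$ so that they collapse precisely to $\alpha=(p/q)^{p-2}(q-1)^{q-1}/(p-1)^{p-1}$. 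Finally, two degenerate regimes need separate (easy) treatment: when $\f(\vec x)=0\Leftrightarrow\g(\vec x)=0$ both spectra equal $\R$, as in the proof of point P1; and when $p=1$ or $q=1$ the transform $\A$ degenerates to the norm-like dual $\D$, so the claim is subsumed by Theorem~\ref{thm:main1}.
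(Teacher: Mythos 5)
Your plan is correct and follows essentially the same route as the paper's proof: derive $p\f(\vec x)=\lambda q\g(\vec x)$ from Euler's identity, apply the subgradient inequality for $\f$ and $\g$ at $\vec x$ to locate a subgradient of $\A\f$ and $\A\g$ along $\vec u$, and track the homogeneity exponents to produce the factor $\alpha$. The only difference is organizational: you package the subgradient computation into a reusable lemma (and locate it at the rescaled point $\vec b=\vec w/((r-1)h(\vec a))$), whereas the paper normalizes $\g(\vec x)=1$ and computes $\A\g(\vec u)$ and $\A\f(\vec u)$ directly; note also that your ``stationarity'' step only determines the constant $c$ and does not by itself prove the supremum is attained at $\vec y=\vec a$---attainment (equivalently, that your affine minorant actually touches $\A h$ at $\vec b$) still requires the subgradient inequality $\langle\vec w,\vec y\rangle\le h(\vec y)+(r-1)h(\vec a)$, which is exactly the computation the paper carries out.
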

\begin{proof}

 Let  $(\lambda,\vec x)$ be an eigenpair of $(\f,\g)$ with $\lambda\ne0$ and $\vec x\ne\vec0$. It is easy to see that $\f(\vec x)=0$ $\Leftrightarrow$ $\g(\vec x)=0$, and in this case, we have  $\A \f(\vec x)=0$, $\A \g(\vec x)=0$, and  $\vec 0\in \partial \A \f(\vec x)\cap \partial \A \g(\vec x)$  which implies  $\vec 0\in \partial \A \g(\vec x)- \lambda \partial \A \f(\vec x)$. 
Hence,  $(\lambda,\vec x)$ is also an eigenpair of $(\A \g,\A \f)$. In fact, from this proof, we obtain that if $\f^{-1}(0)\cap \g^{-1}(0)\ne\{\vec0\}$, then the spectra of  $(\f,\g)$ and $(\A \g,\A \f)$ are $\R$. Therefore, without loss of generality, we assume that $\f^{-1}(0)\cap \g^{-1}(0)=\{\vec0\}$,  $\g(\vec x)=1$ and $\f(\vec x)=q\lambda/p\ne0$. Thus, there exists $\vec u\in \partial \g(\vec x)$ such that $\lambda \vec u\in \partial \f(\vec x)$. Clearly, $\vec u\ne \vec0$. 
It follows from the fact $\partial \g(\vec x)\subset (\g^{-1}(0))^\bot=((\A \g)^{-1}(0))^\bot$ that $\A \g(\vec u)\ne0$. Moreover, we have $\langle \vec u,\vec x\rangle=q\g(\vec x)=q$ by Euler's identity, and $\langle \vec u,\vec x'\rangle-q=\langle \vec u,\vec x'-\vec x\rangle\le \g(\vec x')-\g(\vec x)=\g(\vec x')-1$, $\forall \vec x'\in\R^n$ by the definition of the subgradient. 
So, $\langle \vec u,\vec x'\rangle\le  \g(\vec x')+q-1$, and thus $\langle \vec u,\frac{1}{q-1}\vec x'\rangle\le (q-1)^{q-1} \g(\frac{1}{q-1}\vec x')+1$. Accordingly, $\A \g(\vec u)=(q-1)^{q-1}$, and for any
$\vec u'\in \R^n$, 
\begin{align*}
\langle \vec u'-\vec u,(q-1)^{q-1}\vec x\rangle&=(q-1)^q\langle \vec u'-\vec u,(q-1)^{-1}\vec x\rangle
\\&=(q-1)^q(\langle \vec u',(q-1)^{-1}\vec x\rangle -q (q-1)^{-1})
\\&\le (q-1)^q(\A \g(\vec u')\g((q-1)^{-1}\vec x)+1 -q(q-1)^{-1})
\\&= (q-1)^q((q-1)^{-q}\A \g(\vec u')-(q-1)^{-1})
\\&= \A \g(\vec u')-(q-1)^{q-1}=\A \g(\vec u')-\A \g(\vec u)
\end{align*}
which implies that $(q-1)^{q-1}\vec x\in \partial \A \g(\vec u)$. 
We can similarly derive that $\A \f(\vec u)=\frac {1}{\lambda} (\frac{q(p-1))}{p})^{p-1}$ and  $(\frac pq)^{2-p} (p-1)^{p-1}\frac{1}{\lambda}\vec x\in\partial \A \f(\vec u)$. 
Therefore, $(\lambda (\frac pq)^{p-2}\frac{(q-1)^{q-1}}{ (p-1)^{p-1}},\vec u)$  is an eigenpair of $(\A \g,\A \f)$.
\qed\end{proof}

\begin{remark}
Although Artstein-Avidan and Rubinstein \cite{AR17} introduce a polar subdifferential map which possesses very nice properties for the polarity transform, it is still necessary to use the usual subdifferential in Theorem \ref{thm:A-transform}. 
\end{remark}

Before presenting our main and final result of this section, we need a number of relevant preliminary observations and results.  First, we show in the next Proposition \ref{pro:basic-DAL} that both Legendre and polarity transforms are directly related to the norm-like transform of Section \ref{sec:norm_dual}.  Then, in Proposition \ref{pro:1-p-q-transfer} we show how the eigenpairs of $(f,g)$ change when $f$ and $g$ are raised to some power.  These two results will allow us to work on the spectral duality for Legendre  and polarity transforms for $p$-homogeneous convex functions by means of the results previously shown for the case 
of norm-like duality for one-homogeneous convex functions.

\begin{proposition}\label{pro:basic-DAL}
Given $p\ge 1$ and $r\ge 1/p$, for any  nonnegative  $p$-homogeneous  function $\f$,  $\f$ is convex if and only if $\f^r$ is convex. And, if $\f$ is nonnegative $p$-homogeneous  and convex, then
\begin{equation}\label{eq:ALD}
\mathcal{L}\f=\frac{p-1}{p^{p^*}}(\D \f^{\frac 1p})^{p^*}\text{ and }\mathcal{A}\f=\frac{(p-1)^{p-1}}{p^{p}}(\D \f^{\frac 1p})^{p}.    
\end{equation}
If  
$\f_p\in \CH_p(\R^n)$ with $p\ge1$ and $\{\f_p\}$ Gamma-converges to $\f$ as $p$ tends to $1$, then  
we have
$$\D \f=\lim\limits_{p\to 1^+}(\mathcal{L}\f_p)^{\frac{1}{p^*}}=\lim\limits_{p\to 1^+}(\mathcal{A}\f_p)^{\frac{1}{p}}=\lim\limits_{p\to 1^+}\mathcal{A}\f_p.$$
\end{proposition}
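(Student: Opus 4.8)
I would prove the three assertions in turn, each time reducing to the one-homogeneous theory of Section~\ref{sec:norm_dual}. For the convexity equivalence, set $s=pr\geq1$ and $g=\f^{1/p}$, so that $g$ is one-homogeneous and nonnegative and $\f^{r}=g^{s}$; it then suffices to show that such a $g$ is convex if and only if $g^{s}$ is convex for every $s\geq1$. The forward implication is immediate since $t\mapsto t^{s}$ is convex and nondecreasing on $[0,\infty)$. For the converse, applying convexity of $g^{s}$ to the points $\vec u/\lambda$ and $\vec v/(1-\lambda)$ with weights $\lambda,1-\lambda$ and using one-homogeneity gives
\[
g(\vec u+\vec v)^{s}\leq \lambda^{1-s}g(\vec u)^{s}+(1-\lambda)^{1-s}g(\vec v)^{s}\qquad\text{for all }\lambda\in(0,1);
\]
minimizing the right-hand side over $\lambda$ (the minimum is attained at $\lambda=g(\vec u)/(g(\vec u)+g(\vec v))$ and equals $(g(\vec u)+g(\vec v))^{s}$) yields subadditivity of $g$, which together with one-homogeneity and nonnegativity gives convexity.

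For the identities \eqref{eq:ALD} I would compute directly. Writing $g=\f^{1/p}$ and parametrizing $\vec y=t\vec w$ with $t\geq0$, $g(\vec w)=1$ and $\vec w\perp\f^{-1}(0)$, the definition of $\mathcal{L}\f$ becomes $\sup_{\vec w}\sup_{t\geq0}\big(t\langle\vec x,\vec w\rangle-t^{p}\big)$; the inner supremum equals $\frac{p-1}{p^{p^{*}}}\big(\langle\vec x,\vec w\rangle_{+}\big)^{p^{*}}$, the maximizer being $t=(\langle\vec x,\vec w\rangle/p)^{1/(p-1)}$ when $\langle\vec x,\vec w\rangle>0$ and using $1/(p-1)=p^{*}-1$. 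Since $\sup_{\vec w}\big(\langle\vec x,\vec w\rangle_{+}\big)^{p^{*}}=\big(\max\{0,\sup_{\vec w}\langle\vec x,\vec w\rangle\}\big)^{p^{*}}=\big(\D(\f^{1/p})(\vec x)\big)^{p^{*}}$, the first identity follows; the second is obtained identically from $\sup_{t>0}\frac{t\langle\vec x,\vec w\rangle-1}{t^{p}}=\frac{(p-1)^{p-1}}{p^{p}}\big(\langle\vec x,\vec w\rangle_{+}\big)^{p}$, with maximizer $t=p/((p-1)\langle\vec x,\vec w\rangle)$. Both identities require $p>1$.

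For the limiting statement I would substitute \eqref{eq:ALD} with $g_{p}:=\f_{p}^{1/p}$ to obtain
\[
(\mathcal{L}\f_{p})^{1/p^{*}}=\tfrac{(p-1)^{1/p^{*}}}{p}\,\D g_{p},\qquad (\mathcal{A}\f_{p})^{1/p}=\tfrac{(p-1)^{(p-1)/p}}{p}\,\D g_{p},\qquad \mathcal{A}\f_{p}=\tfrac{(p-1)^{p-1}}{p^{p}}\,(\D g_{p})^{p}.
\]
Since $(p-1)\log(p-1)\to0$ as $p\to1^{+}$, every scalar prefactor tends to $1$, and because $p\to1$ the $p$-th power in the last expression degrades to the first power; hence the three quantities have the common limit $\lim_{p\to1^{+}}\D g_{p}$, and it only remains to identify this limit with $\D\f$. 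As all $\f_{p}$ are finite convex functions, $\Gamma$-convergence coincides with locally uniform convergence, so $\f_{p}\to\f$ and therefore $g_{p}=\f_{p}^{1/p}\to\f$ locally uniformly, i.e.\ $g_{p}\xrightarrow{\Gamma}\f$ in $\CH_{1}(\R^{n})$; the statement thus reduces to the continuity of the norm-like duality $\D$ along a $\Gamma$-convergent family. This last step is the main obstacle: the natural route is to write $\D g_{p}$ as the support function of $\{g_{p}\leq1\}\cap\Ker(g_{p})^{\perp}$, observe that these sets converge in the Painlev\'e--Kuratowski sense to $\{\f\leq1\}\cap\Ker(\f)^{\perp}$, and invoke the stability of support functions / Legendre--Fenchel conjugates under $\Gamma$-convergence (Wijsman--Attouch); the delicate point is controlling the orthogonal projections onto $\Ker(g_{p})^{\perp}$ in the limit (which is automatic, for instance, when $\f$ is positive-definite), so I would isolate it as a standalone lemma on the $\Gamma$-continuity of $\D$ on $\CH_{1}(\R^{n})$.
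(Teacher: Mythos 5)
Your proof is correct and follows the same overall strategy as the paper (reduce to the one-homogeneous case, use \eqref{eq:ALD} to transfer to $\D$, then invoke $\Gamma$-stability of the dual), but each of the three parts is handled with a genuinely different technique. For the convexity equivalence, you derive subadditivity of $g$ by applying convexity of $g^s$ to $\vec u/\lambda$ and $\vec v/(1-\lambda)$ and minimizing the resulting bound over $\lambda$; the paper instead normalizes a convex combination by its value $C=tg(\vec u)+(1-t)g(\vec v)$ and checks directly that $g^p(t\vec u+(1-t)\vec v)/C^p\le 1$. These are morally the same computation, but yours produces subadditivity (and then one-homogeneity gives convexity) while the paper verifies the convexity inequality in one shot; both are equally short and equally rigorous. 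For the identities \eqref{eq:ALD}, you give a self-contained derivation by the radial parametrization $\vec y=t\vec w$ and explicit optimization over $t$, whereas the paper simply cites Artstein-Avidan and Milman \cite{AM09}; your route makes the proposition self-contained and also makes visible where $p>1$ is needed, which the paper leaves implicit. For the limit, both proofs substitute \eqref{eq:ALD} and reduce to showing $\D\f_p^{1/p}\to\D\f$ under $\Gamma$-convergence; you are notably more careful here, isolating the $\Gamma$-continuity of $\D$ (equivalently the convergence of $\{g_p\le 1\}\cap\Ker(g_p)^\perp$ in the Painlev\'e--Kuratowski sense, including the potentially varying kernels) as a separate issue. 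The paper asserts this step as a ``property of Gamma-convergence'' without addressing the kernel-projection subtlety you flag; your caveat is legitimate, and the standalone lemma you propose would make the argument fully rigorous where the paper is terse.
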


\begin{proof}
The first argument is equivalent to the statement that for any nonnegative  one-homogeneous function $\f$, and $p\ge 1$, $\f$ is convex $\Leftrightarrow$ $\f^p$ is convex. To show this property, first note that  the direction that the convexity of  $f$ implies the convexity of $f^p$ is easy since $t\mapsto t^p$ is increasing and convex on $[0,\infty)$. We now show that the convexity of  $f^p$ implies the convexity of $f$. For any $x,y$ with $f(x),f(y)>0$, letting $C=tf(x)+(1-t)f(y)$, we have
\begin{align*}
\frac{f^p(tx+(1-t)y)}{C^p}&=f^p(\frac{tf(x)}{C}\frac{x}{f(x)}+\frac{(1-t)f(y)}{C}\frac{y}{f(y)})
\\&\le \frac{tf(x)}{C}f^p(\frac{x}{f(x)})+\frac{(1-t)f(y)}{C}f^p(\frac{y}{f(y)})
\\&= \frac{tf(x)}{C}+\frac{(1-t)f(y)}{C} = 1
\end{align*}
which yields $f(tx+(1-t)y)\le C$. As the case of $f(x)f(y)=0$ is straightforward, we obtain the convexity of $f$.  The equalities shown in \eqref{eq:ALD} are presented in  \cite{AM09}.
As for the final statement, note that if $\f_p$ Gamma-converges to $\f$, then $\f_p^{1/p}$ also  Gamma-converges to $\f$. And then, by the property of Gamma-convergence, 
$\D \f_p^{1/p}(\vec x)$ converges to $\D \f$ as $p$ tends to $1$. Thus, by \eqref{eq:ALD},  $$(\mathcal{L}\f_p)^{\frac{1}{p^*}}=\left(\frac{p-1}{p^{p^*}}(\D \f_p^{\frac 1p})^{p^*}\right)^{\frac{1}{p^*}}=\frac{(p-1)^{\frac{1}{p^*}}}{p}\D \f_p^{\frac1p}\to\D \f$$
and 
$$(\mathcal{A}\f_p)^{\frac{1}{p}}=\left(\frac{(p-1)^{p-1}}{p^{p}}(\D \f^{\frac 1p})^{p}\right)^{\frac{1}{p}}=\frac{(p-1)^{\frac{p-1}{p}}}{p}\D \f_p^{\frac1p}\to\D \f$$
as $p$ tends to $1$. Clearly, $\mathcal{A}\f_p\to \D \f$, $p\to 1^+$.
\qed\end{proof}

\begin{proposition}\label{pro:1-p-q-transfer}
For $\lambda\ne0$,  $(\lambda,\vec x)$ is an eigenpair of $(\f,\g)$ if and only if $(\frac{ap\f^{p-1}(\vec x)}{ bq \g^{q-1}(\vec x)}\lambda,\vec x)$ is an eigenpair of $(a\f^p,b \g^q)$. 
Moreover, the eigenpairs of  $(\f,\g)$ and $(a\f^p,b \g^q)$ have a completely  equivalent one-to-one  correspondence. 
\end{proposition}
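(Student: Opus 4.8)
The plan is to reduce the whole statement to the Clarke subdifferential chain rule
\[
\partial(a\f^p)(\vec x)=a\,p\,\f(\vec x)^{p-1}\,\partial \f(\vec x)\quad\text{and}\quad \partial(b\g^q)(\vec x)=b\,q\,\g(\vec x)^{q-1}\,\partial \g(\vec x),
\]
valid for every $\vec x\in\R^n$, the right-hand sides being understood as $\{\vec 0\}$ whenever $p>1$ and $\f(\vec x)=0$ (resp.\ $q>1$ and $\g(\vec x)=0$). For $p=1$ there is nothing to prove; for $p>1$ this is the chain rule for the composition of the $C^1$, nondecreasing map $t\mapsto at^p$ on $[0,\infty)$ with the convex function $\f$ at any point where $\f(\vec x)>0$, while at a point with $\f(\vec x)=0$ a local Lipschitz estimate gives $0\le a\f^p(\vec y)\le a(L\|\vec y-\vec x\|)^p=o(\|\vec y-\vec x\|)$, so that $a\f^p$ is differentiable there with vanishing gradient and both sides collapse to $\{\vec 0\}$. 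I expect this identity — and specifically its validity at the zeros of $\f$ and $\g$ when $p,q>1$ — to be the only delicate point; everything after it is bookkeeping (here and throughout $a,b>0$ and $\f,\g$ are convex, nonnegative and homogeneous, as in this section).

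First I would separate the eigenvectors lying in $\Ker(\f)\cap\Ker(\g)$. At such a point $\vec x$, $\vec 0\in\partial\f(\vec x)\cap\partial\g(\vec x)$ because $\vec x$ minimizes the nonnegative convex functions $\f$ and $\g$, and then by the chain-rule identity $\vec 0\in\partial(a\f^p)(\vec x)\cap\partial(b\g^q)(\vec x)$ as well; hence $\vec x$ is an eigenvector of $(\f,\g)$ for every $\lambda$ and of $(a\f^p,b\g^q)$ for every $\mu$, so on these vectors both spectra equal $\R$ and one may take the correspondence to be the identity. For $\vec x\notin\Ker(\f)\cap\Ker(\g)$, I would next record that a nonzero eigenvalue forces $\f(\vec x)>0$ and $\g(\vec x)>0$: if $(\lambda,\vec x)$ is an eigenpair of $(\f,\g)$ with $\lambda\ne0$, choose $\vec v\in\partial\f(\vec x)$ and $\vec w\in\partial\g(\vec x)$ with $\vec v=\lambda\vec w$; by Euler's identity $\langle\vec v,\vec x\rangle$ is a positive multiple of $\f(\vec x)$ and $\langle\vec w,\vec x\rangle$ is a positive multiple of $\g(\vec x)$, so $\vec v=\lambda\vec w$ forces $\f(\vec x)=0\Leftrightarrow\g(\vec x)=0$, which together with $\vec x\notin\Ker(\f)\cap\Ker(\g)$ gives $\f(\vec x),\g(\vec x)>0$. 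The same computation, using the chain-rule identity and Euler's identity applied to $a\f^p$ and $b\g^q$, yields the identical conclusion for the nonzero eigenpairs of $(a\f^p,b\g^q)$.

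On the remaining set $\{\vec x:\f(\vec x)>0,\ \g(\vec x)>0\}$ the equivalence becomes a one-line substitution. For $\mu\ne 0$, the inclusion $\vec 0\in\partial(a\f^p)(\vec x)-\mu\,\partial(b\g^q)(\vec x)$ reads $\vec 0\in ap\,\f(\vec x)^{p-1}\partial\f(\vec x)-\mu\,bq\,\g(\vec x)^{q-1}\partial\g(\vec x)$, and since the scalars $ap\,\f(\vec x)^{p-1}$ and $bq\,\g(\vec x)^{q-1}$ are strictly positive, this is equivalent to $\vec 0\in\partial\f(\vec x)-\big(\tfrac{\mu\,bq\,\g(\vec x)^{q-1}}{ap\,\f(\vec x)^{p-1}}\big)\partial\g(\vec x)$, i.e.\ to $\big(\tfrac{\mu\,bq\,\g(\vec x)^{q-1}}{ap\,\f(\vec x)^{p-1}},\vec x\big)$ being an eigenpair of $(\f,\g)$. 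Solving for $\mu$ gives exactly $\mu=\tfrac{ap\,\f^{p-1}(\vec x)}{bq\,\g^{q-1}(\vec x)}\lambda$ with $\lambda$ the eigenvalue of $(\f,\g)$, and $\mu\ne0\Leftrightarrow\lambda\ne0$; thus on each such eigenvector the rescaling $\lambda\mapsto\tfrac{ap\,\f^{p-1}(\vec x)}{bq\,\g^{q-1}(\vec x)}\lambda$ is a bijection between the nonzero eigenvalues of the two pairs. Finally, the zero eigenvalue contributes nothing new, since $\vec 0\in\partial\f(\vec x)$ (resp.\ $\vec 0\in\partial(a\f^p)(\vec x)$) holds exactly when $\f(\vec x)=0$, so the two pairs share the same $0$-eigenspace $\Ker(\f)$. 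Collecting the three cases — eigenvectors in the common kernel, nonzero eigenvalues on $\{\f>0,\ \g>0\}$, and the zero eigenvalue — yields the asserted completely equivalent one-to-one correspondence.
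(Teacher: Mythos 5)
Your proof uses the same core mechanism as the paper's --- the Clarke chain rule $\partial(a\f^p)(\vec x)=ap\f(\vec x)^{p-1}\partial\f(\vec x)$ (and similarly for $b\g^q$), followed by factoring out the positive scalars $ap\f(\vec x)^{p-1}$ and $bq\g(\vec x)^{q-1}$ --- so the underlying route to the one-to-one correspondence is identical. Where you go further is in the bookkeeping the paper leaves implicit: you verify the chain rule at zeros of $\f$ for $p>1$, and you explicitly isolate the case $\vec x\in\Ker(\f)\cap\Ker(\g)$, where the paper's opening assertion that $\mu\ne 0$ forces $\f(\vec x)>0$ and $\g(\vec x)>0$ actually fails (there both spectra equal $\R$ and the rescaling formula degenerates to $0/0$, so the correspondence must be taken as the identity, as you note). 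These additions close real small gaps in the published argument without changing its substance.
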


\begin{proof}
If $(\mu,\vec x)$  is an eigenpair of $(a\f^p,b \g^q)$ where $\mu\ne0$,  then $\f(\vec x)>0$ and $\g(\vec x)>0$, and 
\begin{align*}
\vec 0&\in \partial a\f^p(\vec x)-\mu \partial b\g^q(\vec x)=ap\f^{p-1}(\vec x)\partial \f(\vec x)-\mu bq \g^{q-1}(\vec x)\partial \g(\vec x)
\\&=ap\f^{p-1}(\vec x)\left(\partial \f(\vec x)-\frac{\mu bq \g^{q-1}(\vec x)}{ap\f^{p-1}(\vec x)}\partial \g(\vec x)\right)
\end{align*}
which implies that $\frac{\mu bq \g^{q-1}(\vec x)}{ap\f^{p-1}(\vec x)}$ is an eigenvalue of $(\f,\g)$. Conversely, it is easy to see that if $(\lambda,\vec x)$ is an eigenpair of  $(\f,\g)$ with $\lambda\ne 0$, then $(\frac{ap\f^{p-1}(\vec x)}{ bq \g^{q-1}(\vec x)}\lambda,\vec x)$ is an eigenpair of $(a\f^p,b \g^q)$. 

In addition, it is clear that $(0,\vec x)$ is an eigenpair of $(\f,\g)$ if and only if $(0,\vec x)$ is an eigenpair of $(a\f^p,b \g^q)$.
\qed\end{proof}

Finally, we point out that we need to be careful with the case $p\neq q$ when dealing with multiplicities and variational eigenvalues. In that case, in fact, $r=f/g$ is not scale-invariant and   the eigenvalues of $(f,g)$ and their multiplicities have degenerate properties. Precisely,  

\begin{lemma}\label{lem:reason-pneq}
Given $\f\in \CH_p(\R^n)$ and $\g\in \CH_q(\R^n)$  with $p,q\geq 1$, for any $\lambda\ge0$ and $t>0$,  there holds  $x\in S_\lambda(\f,\g)$ if and only if $tx\in S_{t^{p-q}\lambda}(\f,\g)$. Moreover, if $p\ne q$,  $f$ and $g$ are  even, and $(\f,\g)$ has a nonzero eigenvalue, then  the function  $\lambda\mapsto\mathrm{mult}_{f,g}(\lambda)$ is constant on $(0,+\infty)$.
\end{lemma}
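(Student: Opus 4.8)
The plan is to reduce the whole statement to two elementary facts: the positive homogeneity of the subdifferential, and the invariance of the Krasnoselskii genus under dilations.

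For the first assertion I would start from the observation that for a finite convex positively $p$-homogeneous function $\f$ the Clarke subdifferential — which here, by convexity, coincides with the ordinary subdifferential — is positively $(p-1)$-homogeneous, i.e.\ $\partial\f(t\vec x)=t^{p-1}\partial\f(\vec x)$ for every $t>0$; this is immediate on writing the subgradient inequality at $t\vec x$, substituting the test point $\vec z=t\vec w$, and using $p$-homogeneity. Applying this to both $\f$ and $\g$ gives, for $t>0$,
$$\partial\f(t\vec x)-\mu\,\partial\g(t\vec x)=t^{p-1}\partial\f(\vec x)-\mu\, t^{q-1}\partial\g(\vec x)=t^{p-1}\big(\partial\f(\vec x)-\mu\, t^{q-p}\partial\g(\vec x)\big),$$
and since $t^{p-1}>0$ this set contains $\vec 0$ if and only if $\vec 0\in\partial\f(\vec x)-\mu\, t^{q-p}\partial\g(\vec x)$. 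Hence $t\vec x\in S_\mu(\f,\g)$ if and only if $\vec x\in S_{\mu t^{q-p}}(\f,\g)$, and specializing $\mu=t^{p-q}\lambda$ yields the first assertion $\vec x\in S_\lambda(\f,\g)\iff t\vec x\in S_{t^{p-q}\lambda}(\f,\g)$, equivalently $S_{t^{p-q}\lambda}(\f,\g)=t\,S_\lambda(\f,\g)$.

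For the \emph{moreover} part I would then note that, since $\f,\g$ are even, $\partial\f$ and $\partial\g$ are odd, so each $S_\lambda(\f,\g)$ is a symmetric set; it is also closed (the subdifferentials have closed graph), so $\mathrm{genus}(S_\lambda(\f,\g))$ is well-defined. The dilation $\Phi_t\colon\vec x\mapsto t\vec x$ with $t>0$ is an odd homeomorphism of $\R^n$, and the genus is invariant under odd homeomorphisms (if $\varphi\in C_k(S)$ then $\varphi\circ\Phi_t^{-1}\in C_k(\Phi_t(S))$, and conversely), so the scaling identity gives $\mathrm{mult}_{\f,\g}(t^{p-q}\lambda)=\mathrm{genus}(t\,S_\lambda(\f,\g))=\mathrm{genus}(S_\lambda(\f,\g))=\mathrm{mult}_{\f,\g}(\lambda)$ for every $t>0$ and $\lambda\ge0$. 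Because $p\ne q$, the map $t\mapsto t^{p-q}$ is a bijection of $(0,+\infty)$ onto itself, so for any $\lambda,\lambda'>0$ the choice $t=(\lambda'/\lambda)^{1/(p-q)}$ gives $t^{p-q}\lambda=\lambda'$; hence $\mathrm{mult}_{\f,\g}$ is constant on $(0,+\infty)$. The hypothesis that $(\f,\g)$ has a nonzero eigenvalue enters only to make this common value nontrivial: such a pair necessarily has a \emph{positive} eigenvalue (in the non-degenerate case $\f^{-1}(0)\cap\g^{-1}(0)=\{\vec0\}$ any nonzero eigenvalue is positive by Euler's identity $p\f(\vec x)=\lambda q\,\g(\vec x)$, while in the degenerate case every real number is an eigenvalue), so some eigenspace $S_\lambda(\f,\g)$ with $\lambda>0$ is nonempty, and by the constancy just proved every such eigenspace has genus at least one.

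I do not expect a genuine obstacle here: both ingredients are elementary and the rest is bookkeeping with the exponent $p-q$. The only two points that warrant an explicit line are the $(p-1)$-homogeneity of $\partial\f$ in the sense of Clarke (legitimate because convexity makes Clarke's derivative the usual subdifferential) and the fact that $S_\lambda(\f,\g)$ is a closed symmetric set, so that $\mathrm{genus}$ applies to it.
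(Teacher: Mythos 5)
Your proof is correct and follows essentially the same route as the paper's: you establish the scaling identity $S_{t^{p-q}\lambda}(\f,\g)=t\,S_\lambda(\f,\g)$ via the $(p-1)$- and $(q-1)$-homogeneity of the subdifferentials, and then invoke the invariance of the Krasnoselskii genus under the odd homeomorphism $\vec x\mapsto t\vec x$ together with the surjectivity of $t\mapsto t^{p-q}$ on $(0,\infty)$ when $p\neq q$. Your extra remarks — that $S_\lambda$ is closed and symmetric so the genus applies, and that the ``nonzero eigenvalue'' hypothesis forces a positive eigenvalue and hence a nontrivial common value — are a useful bit of added rigor but do not change the argument.
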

\begin{proof}
By the definition of the eigenspace $S_\lambda$, and the homogeneity of $\partial\f$ and $\partial\g$, we have
\begin{align*}
x\in S_\lambda(\f,\g)&\Longleftrightarrow 0 \in \partial \f(\vec x) - \lambda \partial \g(\vec x) 
\\& \Longleftrightarrow 0 \in t^{p-1}\partial \f(\vec x) - \lambda t^{p-1} \partial \g(\vec x) 
\\& \Longleftrightarrow 0 \in \partial \f(t\vec x) - \lambda t^{p-q} \partial \g(t\vec x) 
\\& \Longleftrightarrow tx\in S_{t^{p-q}\lambda}(\f,\g).
\end{align*}
If $p\ne q$ and $(\f,\g)$ has a positive eigenvalue $\hat{\lambda}>0$, then for any $t>0$,  $t^{p-q}\hat{\lambda}$ is also an eigenvalue of $(\f,\g)$, that is, all positive numbers are eigenvalues of $(\f,\g)$. 
Note that for any $t>0$, the map $\varphi_t:\R^n\to\R^n$ defined by $\varphi_t(x)=tx$ is an odd homeomorphism. Then, for any $\lambda>0$, it follows from $\varphi_t(S_\lambda(\f,\g))=S_{t^{p-q}\lambda}(\f,\g)$ and the homeomorphism-invariance of  Krasnoselskii genus that 
$$\mathrm{mult}_{f,g}(\lambda)=\mathrm{genus}(S_\lambda(\f,\g))=\mathrm{genus}(S_{t^{p-q}\lambda}(\f,\g))=\mathrm{mult}_{f,g}(t^{p-q}\lambda).$$
By the arbitrariness of $\lambda>0$ and $t>0$, the multiplicity function $\mathrm{mult}_{f,g}(\lambda)$ is independent of $\lambda>0$.
\qed\end{proof}

Thus, when $p\neq q$ the (variational) eigenvalues of $(f,g)$ change when the corresponding eigenvector is scaled and their multiplicities are constant. To overcome this issue and have a meaningful definition of variational eigenvalues also for the $p\neq q$ case, it is useful to restrict the  variational eigenvalues to suitable centrally symmetric convex surfaces.  In particular, we note that for $p=q$ we have $f(x)/g(x) = f(x/g(x)^{1/p})$ and $g(x/g(x)^{1/p})=1$ for all $x\neq 0$. Thus, we can recast \eqref{eq:variational_eigs}  as
\begin{equation}\label{eq:restricted_variationa_eig}
    \lambda_k(f,g) = \inf\limits_{\substack{\mathrm{genus}(S)\ge k\\S\subset \g^{-1}(1)}}\;\sup\limits_{\vec x\in S} \; r(x) \; = \inf\limits_{\substack{\mathrm{genus}(S)\ge k\\S\subset \g^{-1}(1)}}\;\sup\limits_{\vec x\in S} \;  f(x)\, 
\end{equation}
i.e., for $p=q$ the $k$-th variational eigenvalue  equals the $k$-th min-max critical value of $f$ restricted to the centrally symmetric convex hypersurface $\g^{-1}(1)$.

By constraining the eigenvalues to  $g^{-1}(1)$, 
%
%
%
%
the next theorem provides the Legendre and polarity transforms' version of Theorem \ref{thm:main1} and Corollary \ref{cor:main2}, i.e., it presents the overall spectral duality equivalence between Frenchel duality,  polarity transform and linear transformations.

\begin{theorem}\label{thm:AL}
For any  
$\f\in \CH_p(\R^m)$, $\g\in \CH_q(\R^n)$, and linear map $A:\R^n\to\R^m$, the  strong equivalence relations illustrated in the following diagram hold: 
$$
\!\!\!\xymatrix{
(\mathcal{L}\M_{A^\top}\PA_A\g,\mathcal{L}\M_{A^\top}\f)
\ar@{<=>}[r]
&(\M_{A^\top}\f,\M_{A^\top}\PA_A\g)\ar@{<=>}[d]\ar@{<=>}[r]
&(\mathcal{A} \M_{A^\top}\PA_A\g,\mathcal{A}\M_{A^\top}\f)\\
(\mathcal{L} \g,\mathcal{L}\M_{A^\top}\f)\ar@{<=>}[r] &  (\M_{A^\top}\f,\g)\ar@{<=>}[r]\ar@{<=>}[d] & (\A \g,\A\M_{A^\top}\f)\\
(\mathcal{L}\PA_A  \g, \mathcal{L} \f)\ar@{<=>}[r]&( \f,\PA_A \g)\ar@{<=>}[r] & (\A\PA_A \g, \A \f)}$$
where the strong  equivalence notation $(f,g)\Longleftrightarrow(f',g')$  indicates that for the two pairs $(f,g)$ and $(f',g')$, the nonzero eigenvalues and the nonzero variational eigenvalues restricted to $g^{-1}(1)$ 
as  in \eqref{eq:restricted_variationa_eig}  coincide up to some scaling or power factors, and the   corresponding multiplicities (when $p= q$) coincide exactly.  
\end{theorem}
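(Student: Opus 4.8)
The plan is to reduce the whole diagram to the one-homogeneous situation already settled in Theorem \ref{thm:main1} and Corollary \ref{cor:main2}. Two facts make this possible. First, Proposition \ref{pro:basic-DAL} writes the Legendre and polarity transforms of a convex $p$-homogeneous function as fixed positive-constant multiples of powers of the norm-like dual of its $p$-th root: for $f\in\CH_p(\R^m)$ one has $\mathcal{L}f = c_p(\D f^{1/p})^{p^*}$ and $\A f = d_p(\D f^{1/p})^{p}$ with constants $c_p,d_p>0$ (for $p>1$) depending only on $p$. Second, Proposition \ref{pro:1-p-q-transfer} shows that replacing a pair $(h_1,h_2)$ by $(a h_1^{s}, b h_2^{t})$ with $a,b>0$, $s,t\ge 1$ puts the nonzero spectra in one-to-one correspondence with an explicit scaling/power factor. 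Accordingly I would set $\tilde f = f^{1/p}\in\CH_1(\R^m)$ and $\tilde g = g^{1/q}\in\CH_1(\R^n)$ — convex by Proposition \ref{pro:basic-DAL}, and even whenever $f,g$ are — and observe that $\M_{A^\top}$ and $\PA_A$ commute with the root operation, since $t\mapsto t^{1/p}$ is increasing and hence commutes with the infima defining $\PA_A$. With this, each of the nine nodes of the diagram is a positive-constant multiple of a power of one of the $\CH_1$-pairs built from $\tilde f,\tilde g,\D,\M_{A^\top},\PA_A$, and each such $\CH_1$-pair is either among the five pairs of Corollary \ref{cor:main2} applied to $(\tilde f,\tilde g,A)$ or its norm-like dual, which is covered by Theorem \ref{thm:main1}.

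With this dictionary fixed, the second step is to carry the spectral data through. For the nonzero eigenvalues and eigenvectors, Proposition \ref{pro:1-p-q-transfer} gives the bijection with a scaling/power factor and Corollary \ref{cor:main2} together with Theorem \ref{thm:main1} gives the coincidence among the $\CH_1$-pairs; composing, one obtains the asserted strong equivalence on every edge, the precise factors being exactly those recorded in Theorems \ref{thm:L-transform} and \ref{thm:A-transform} on the outer (Legendre/polarity) edges and trivial in the middle column. For the multiplicities I would restrict to $p=q$ as in the statement — the $p\ne q$ case being degenerate by Lemma \ref{lem:reason-pneq} — and verify the short computation that for a nonzero eigenvalue $\mu$ one has an equality of eigenspaces $S_\mu(a h_1^{s}, b h_2^{s})=S_{\rho(\mu)}(h_1,h_2)$, with $\rho$ the relevant monotone power–scale map; this uses $\partial(h^{s})=s\,h^{s-1}\partial h$, the positivity $h_1(x),h_2(x)>0$ at nonzero eigenvectors, and Euler's identity $\nu=h_1(x)/h_2(x)$ for the scale-invariant ratio. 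Since the eigenspaces literally agree, so do their genera, hence the multiplicities, and Theorem \ref{thm:main1}~(P2)/Corollary \ref{cor:main2} then close the loop.

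For the variational eigenvalues restricted to $g^{-1}(1)$ I would use that the level set $(a h_2^{s})^{-1}(1)$ is a rescaled copy of $h_2^{-1}(1)$ and that, after the corresponding change of variables on admissible competitor sets (which preserves the genus), $\sup_{x\in S}a h_1^{s}(x)$ becomes a fixed positive constant times $\big(\sup_{x'\in S'}h_1(x')\big)^{s}$. Taking the infimum shows that the restricted variational eigenvalue of $(a h_1^{s}, b h_2^{s})$ equals a fixed positive constant times the $s$-th power of that of $(h_1,h_2)$. Combining this with \eqref{eq:restricted_variationa_eig} for the degree-one pairs — where restricted and unrestricted variational eigenvalues coincide — and with the reindexing of Theorem \ref{thm:main1}~(P3) and its analogue inside the proof of Corollary \ref{cor:main2}, one obtains the coincidence of the nonzero restricted variational eigenvalues up to the stated factors. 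The genus manipulations with geometric joins and partitions of unity needed to pass between the $\M_A$- and $\PA_A$-pairs are already carried out inside the proofs of Theorem \ref{thm:main1} and Corollary \ref{cor:main2}, so they can be invoked rather than repeated.

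The main obstacle I expect is organizational rather than conceptual: keeping the exponents and the positive multiplicative constants coherent across all nine nodes simultaneously, so that a single consistent power/scale factor links each node to one fixed reference pair, and in particular checking that it is the restricted min-max \eqref{eq:restricted_variationa_eig} — not merely the unrestricted one — that is preserved under Proposition \ref{pro:1-p-q-transfer}, since the denominator's level set moves to a rescaled convex surface under the transforms. There is also a minor boundary issue at $p=1$ or $q=1$, where the Legendre node degenerates and should be read through the limit in Proposition \ref{pro:basic-DAL} (or simply omitted), and the evenness hypotheses implicit in the multiplicity and variational parts should be stated explicitly, exactly as in Theorem \ref{thm:main1} and Corollary \ref{cor:main2}.
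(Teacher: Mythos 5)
Your approach is correct and matches the paper's: both reduce to $\CH_1$ via $\tilde f=f^{1/p}$, $\tilde g=g^{1/q}$ and then invoke Propositions \ref{pro:basic-DAL} and \ref{pro:1-p-q-transfer} together with Theorem \ref{thm:main1} and Corollary \ref{cor:main2} to close the diagram. Your explicit observations---that $\M_{A^\top}$ and $\PA_A$ commute with taking $p$-th roots, that the $p=1$ boundary must be read through the limiting form in Proposition \ref{pro:basic-DAL} (or the Legendre node omitted), and that the multiplicity statement is only meaningful for $p=q$ by Lemma \ref{lem:reason-pneq}---are correct details the paper leaves implicit.
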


\begin{proof}Theorems \ref{thm:L-transform} and \ref{thm:A-transform} imply that the nonzero spectra of $(\f,\g)$,  $(\mathcal{L}\g,\mathcal{L}\f)$ and $(\mathcal{A}\g,\mathcal{A}\f)$ coincide up to some  scaling or power factors.  
For any $\f\in \CH_p(\R^n)$, let $\tilde{\f}=\f^{\frac1p}$. Then,  $\tilde{\f}\in \CH_1(\R^n)$ and \eqref{eq:ALD} in Proposition  \ref{pro:basic-DAL} implies that 
$$\mathcal{L}\f=l_p(\D \tilde{\f})^{p^*}\text{ and }\mathcal{A}\f=a_p(\D \tilde{\f})^{p}   $$
where $l_p=\frac{p-1}{p^{p^*}}$ and $a_p=\frac{(p-1)^{p-1}}{p^{p}}$ are constants. Then, by Proposition \ref{pro:1-p-q-transfer}, 
the eigenvalue problems of $(\f,\g)$,  $(\mathcal{L}\g,\mathcal{L}\f)$ and $(\mathcal{A}\g,\mathcal{A}\f)$ can be equivalently  reduced to that of $(\tilde{\f},\tilde{\g})$ and $(\mathcal{D}\tilde{\g},\mathcal{D}\tilde{\f})$ up to some scaling factors.  It follows from Theorem \ref{thm:main1} that the spectra 
of $(\tilde{\f},\tilde{\g})$ and $(\mathcal{D}\tilde{\g},\mathcal{D}\tilde{\f})$ coincide  exactly, 
and hence the eigenvalue problems of $(\f,\g)$,  $(\mathcal{L}\g,\mathcal{L}\f)$ and $(\mathcal{A}\g,\mathcal{A}\f)$ are strongly equivalent. 

Moreover, according to Theorem \ref{thm:main1} and Corollary \ref{cor:main2}, we have the following strong equivalences regarding norm-like duality:
$$\xymatrix{(\M_{A^\top}\tilde{\f},\M_{A^\top}\PA_A\tilde{\g})\ar@{<=>}[d]\ar@{<=>}[rr]^{\text{norm-like dual\;\;}}&&(\D\M_{A^\top}\PA_A\tilde{\g},\D\M_{A^\top}\tilde{\f})\\ (\M_{A^\top}\tilde{\f},\tilde{\g})\ar@{<=>}[rr]^{\text{norm-like dual\;\;}}\ar@{<=>}[d] & & (\D \tilde{\g},\D\M_{A^\top}\tilde{\f})\\
( \tilde{\f},\PA_A \tilde{\g})\ar@{<=>}[rr]^{\text{norm-like dual\;\;}} & & (\D\PA_A \tilde{\g}, \D \tilde{\f})}$$
Therefore, by Propositions \ref{pro:basic-DAL} and  \ref{pro:1-p-q-transfer}, the strong equivalences among the nine eigenvalue problems shown in the diagram in the statement are established.\qed
\end{proof}

\begin{remark}According to Proposition  \ref{thm:double-dual}, we can  replace 
$\PA_A$ by $\D\M_A\D$ in Theorem \ref{thm:AL}, if we have the additional assumption that  $\mathrm{Ker}\,A\supset \f^{-1}(0)$. 
\end{remark}

\section{Spectral duality for  standard duality transforms}\label{sec:comparison}
While in many applications (see also next Section \ref{sec:app}) it is useful to consider eigenvalue problems with function pairs that have a linear kernel and whose dual is not infinity, in the field of convex analysis or convex geometry it is  frequent to use  the standard version of the  definitions of duality and infimal postcomposition $\hat{\PA}_A\f(\vec x):=A\triangleright f(\vec x)$.  
Note that if we use the latter in place of  $\PA_A$, we can for example   remove the condition $\f^{-1}(0)\subset\mathrm{Ker}(A)$ in Proposition~\ref{thm:double-dual}, that is,  
for any $\f\in \CH_1(\R^n)$, $\forall\vec x\in\R^n$, we have   $\hat{\PA}_A \f(\vec x)=\hat{\D}\M_A\hat{\D}\f(\vec x)$, where $\hat{\D}$ denotes the standard norm dual operator $\hat Df(x) = f^*(x) = \sup_{f(y)\leq 1}\langle x,y\rangle$. 

Let 
$\CVH_p(\R^n)$ be the collection of all   
convex, positively $p$-homogeneous functions from $\R^n$ to $[0,+\infty]$.  Clearly, $\CH_p(\R^n)\subsetneqq\CVH_p(\R^n)$. It is known that $\hat{\D}:\CVH_1(\R^n)\to \CVH_1(\R^n)$ and  $\hat{\A}:\CVH_p(\R^n)\to \CVH_p(\R^n)$ are bijections, whereas  $\hat{\mathcal{L}}:\CVH_p(\R^n)\to \CVH_{p^*}(\R^n)$ is a bijection when $p>1$.  A straightforward modification of the proofs of Theorems \ref{thm:main1}, \ref{thm:L-transform} and \ref{thm:A-transform}, leads to the following results.
\begin{theorem}\label{thm:dual-main-o}
For any nonconstant $\f,\g\in \CVH_1(\R^n)$,  the nonzero eigenvalues  of  $(\f,\g)$ and  $(\hat{\D} \g, \hat{\D} \f)$ coincide. Precisely, for any eigenpair $(\lambda,\vec x)$ of $(\f,\g)$ with $\lambda\ne0$ and $\g(\vec x)\ne0$,    $\forall\vec u\in \mathrm{cone}(\partial \f(\vec x))\cap  \mathrm{cone}(\partial \g(\vec x)) $, $(\lambda,\vec u)$ is an eigenpair  of $(\hat{\D} \g,\hat{\D} \f)$. Moreover, if $f$ and $g$ are even functions, then the variational eigenvalues \eqref{eq:variational_eigs} of $(\f,\g)$ and $(\hat{\D} \g,\hat{\D} \f)$ as well as their multiplicities coincide exactly.
\end{theorem}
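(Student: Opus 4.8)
The plan is to transcribe the proof of Theorem~\ref{thm:main1} (points P1--P3) almost verbatim, replacing the modified dual $\D$ by the standard norm dual $\hat{\D}$, and to track the one genuinely new feature: $\hat{\D}\f$ and $\hat{\D}\g$ may now take the value $+\infty$. For the eigenvalue invariance, let $(\lambda,\vec x)$ be an eigenpair of $(\f,\g)$ with $\lambda\ne0$ and $\g(\vec x)\ne0$, and pick $\vec u\in\partial\g(\vec x)$ with $\lambda\vec u\in\partial\f(\vec x)$. Euler's identity gives $\langle\vec u,\vec x\rangle=\g(\vec x)\in(0,\infty)$ and $\langle\lambda\vec u,\vec x\rangle=\f(\vec x)\ge0$, so $\lambda=\f(\vec x)/\g(\vec x)>0$, and we may replace $\vec x$ by $\vec x/\g(\vec x)$ to assume $\g(\vec x)=1$, $\f(\vec x)=\lambda$. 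The three facts used in the proof of P1 --- that $\hat{\D}\g(\vec u)=1$, that $\vec x\in\partial\hat{\D}\g(\vec u)$, and that $\vec x/\lambda\in\partial\hat{\D}\f(\lambda\vec u)=\partial\hat{\D}\f(\vec u)$ --- follow from nothing more than the subgradient inequality $\langle\vec u,\vec y-\vec x\rangle\le\g(\vec y)-\g(\vec x)$, the definition of $\hat{\D}\g$ as the support function of $\{\g\le1\}$, and the $0$-homogeneity of $\partial\f$ and $\partial\hat{\D}\f$; wherever $\hat{\D}\f$ or $\hat{\D}\g$ equals $+\infty$ the relevant inequalities are vacuous, so no positivity hypothesis is needed. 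Hence $\vec0=\vec x-\lambda(\vec x/\lambda)\in\partial\hat{\D}\g(\vec u)-\lambda\,\partial\hat{\D}\f(\vec u)$, i.e.\ $(\lambda,\vec u)$ is an eigenpair of $(\hat{\D}\g,\hat{\D}\f)$, and $0$-homogeneity of $\partial\hat{\D}\f,\partial\hat{\D}\g$ gives the same for every $\vec u\in\cone(\partial\f(\vec x))\cap\cone(\partial\g(\vec x))$. The converse follows by applying this to $(\hat{\D}\g,\hat{\D}\f)$ and using $\hat{\D}\hat{\D}=\mathrm{id}$ on $\CVH_1(\R^n)$; the degenerate case $\f^{-1}(0)\cap\g^{-1}(0)\ne\{\vec0\}$, where both spectra equal $\R$, is handled exactly as in the proof of P1.

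For the even case, note first that $\f^{-1}(0)$ and $\g^{-1}(0)$ are symmetric convex cones, hence linear subspaces, so all the kernel bookkeeping of P2 and P3 remains available. Lemmas~\ref{lem:size_g_S}, \ref{lem:cd} and \ref{lem:ef} use only that $\partial\g$ is an odd, upper semicontinuous set-valued map on $\R^n\setminus\{\vec0\}$ satisfying Euler's relation, together with Moreau--Yosida smoothing, the deformation non-decreasing property and the continuity of the Krasnoselskii genus, and a partition-of-unity argument on the compact normalized eigenspace $\hat S_\lambda(\f,\g)$; since $S_\lambda(\f,\g)$ is contained in the effective domains of $\partial\f$ and $\partial\g$, these arguments are unaffected by $\hat{\D}\f,\hat{\D}\g$ being $+\infty$ elsewhere. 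One therefore obtains $\mathrm{genus}(S_\lambda(\f,\g))=\mathrm{genus}\big(\bigcup_{\vec x\in S_\lambda(\f,\g)}\cone(\partial\f(\vec x))\cap\partial\g(\vec x)\big)=\mathrm{genus}(S_\lambda(\hat{\D}\g,\hat{\D}\f))$, which is the multiplicity claim, and the geometric-join construction $W=\partial\g(S)*\mathbb{S}$ of the proof of P3, applied to sets $S\subset\g^{-1}(1)$ realizing the min--max in \eqref{eq:variational_eigs}, transfers verbatim and yields equality of the variational eigenvalues (the index shift $d_\f-d_\g$ of P3 vanishes, in particular when $\g$ is positive-definite, the case in which \eqref{eq:variational_eigs} genuinely produces $n$ eigenvalues).

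The one point requiring care --- and the only real obstacle --- is to certify that the value $+\infty$ never invalidates a step: every subgradient manipulation in the proof of Theorem~\ref{thm:main1} must be checked to remain meaningful (it does, an inequality with infinite right-hand side being vacuous), $\hat{\D}$ must be a genuine involution on the class at hand (this is where \emph{nonconstant}, i.e.\ properness, enters --- $\hat{\D}$ interchanges the two degenerate constants $\vec x\mapsto0$ and $\vec x\mapsto+\infty$, both of which are excluded), and the compactness and oddness underlying the genus arguments must survive when $\g$ is finite only on a cone. None of this demands ideas beyond those already used for Theorems~\ref{thm:main1}--\ref{thm:A-transform}, so the remaining routine adjustments are omitted.
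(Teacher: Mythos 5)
The paper itself gives no actual proof of Theorem~\ref{thm:dual-main-o}; it merely asserts that a ``straightforward modification'' of the proof of Theorem~\ref{thm:main1} yields it, so your decision to transcribe the argument for $\D$ and track the effect of $+\infty$ values is exactly the paper's intended route, and your treatment of the eigenvalue-invariance part (the analogue of P1) is correct. The observation that for even $\f,\g$ the sets $\f^{-1}(0),\g^{-1}(0)$ are symmetric convex cones and hence linear is a genuinely useful point that the paper leaves implicit.

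There is, however, a real gap in the part on variational eigenvalues. You claim that ``the geometric-join construction $W=\partial\g(S)*\mathbb{S}$ of the proof of P3 \ldots transfers verbatim,'' but it does not. The join with the unit sphere $\mathbb{S}$ of $\g^{-1}(0)$ produces points $\vec y=t\vec u-\vec v$ with a nontrivial $\Ker(\g)$-component $\vec v$, and at such points $\hat{\D}\g(\vec y)=+\infty$ (since $\vec y\notin(\Ker\g)^\bot$), so the ratio $\hat{\D}\g(\vec y)/\hat{\D}\f(\vec y)$ becomes unbounded and the estimate collapses. The correct observation is the opposite: with the standard dual $\hat{\D}$ the join and the subsequent intersection with $(\f^{-1}(0))^\bot$ are \emph{unnecessary}. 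Taking simply $W'=\partial\g(S)$, Lemma~\ref{lem:size_g_S} already gives $\mathrm{genus}(W')\geq k$, and for every $\vec u\in\partial\g(\vec x)$ with $\vec x\in S\subset\g^{-1}(1)$ one has $\hat{\D}\g(\vec u)=1$ while either $\vec u\notin(\Ker\f)^\bot$, in which case $\hat{\D}\f(\vec u)=+\infty$ and the ratio is $0$ (this case is forced whenever $\f(\vec x)=0$, since $\langle\vec u,\vec x\rangle=\g(\vec x)=1\ne0$), or $\vec u\in(\Ker\f)^\bot$, in which case $\hat{\D}\f(\vec u)\geq\langle\vec u,\vec x\rangle/\f(\vec x)=1/\f(\vec x)\geq 1/\lambda_k$ so the ratio is at most $\lambda_k$. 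Hence $\lambda_k(\hat{\D}\g,\hat{\D}\f)\leq\lambda_k(\f,\g)$ without any index shift, and the converse follows from the involutive character of $\hat{\D}$. It is precisely the regions where $\hat{\D}\f=+\infty$ that supply the zero values of the ratio that, under the modified dual $\D$, were instead absorbed into the index shift $d_\g-d_\f$; your parenthetical remark about $\g$ being positive-definite does not explain the vanishing of the shift and should be replaced by this mechanism. The same caveat applies to your claim that the genus arguments for multiplicities are ``unaffected'' by the $+\infty$ values: the eigenspaces $S_\lambda(\hat{\D}\g,\hat{\D}\f)$ are constrained to the effective domains of $\hat{\D}\f$ and $\hat{\D}\g$, which is a different set from the eigenspaces of $(\D\g,\D\f)$, so the genus comparison must be checked rather than asserted.
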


\begin{theorem}\label{thm:L-transform-o}
Given $p,q>1$, for any functions $f\in\CVH_p(\R^n)$ and $g\in\CVH_q(\R^n)$, for any eigenpair $(\lambda,\vec x)$ of $(\f,\g)$ with $\lambda\ne0$ and $\g(\vec x)\ne0$,    and for any $\vec u\in \mathrm{cone}(\partial \f(\vec x))\cap   \partial \g(\vec x) $, $(\lambda^{p^*-1},\vec u)$ is an eigenpair  of $(\hat{\mathcal{L}} \g,\hat{\mathcal{L}}\f)$. 
\end{theorem}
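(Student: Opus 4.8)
The plan is to follow the proof of Theorem~\ref{thm:L-transform} essentially line by line, replacing the modified Legendre transform $\mathcal{L}$ by the standard one $\hat{\mathcal{L}}$ and deleting every appeal to the linearity of the kernel (which, for the standard transform, is simply not used). Two facts about $\hat{\mathcal{L}}$ on $\CVH_p$ carry the argument. First, since $\hat{\mathcal{L}}\colon\CVH_p(\R^n)\to\CVH_{p^*}(\R^n)$ is a bijection for $p>1$, as recalled above, $\hat{\mathcal{L}}f$ is positively $p^*$-homogeneous, so its subdifferential $\partial\hat{\mathcal{L}}f$ is positively $(p^*-1)$-homogeneous on $\R^n\setminus\{0\}$. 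Second, for any \emph{proper} convex $\varphi\colon\R^n\to[0,+\infty]$ one has the Fenchel--Young subgradient implication $v\in\partial\varphi(x)\Rightarrow x\in\partial\varphi^*(v)=\partial\hat{\mathcal{L}}\varphi(v)$, and this direction holds \emph{without} assuming lower semicontinuity, hence is available throughout $\CVH_p$. Properness of $f$ and $g$ is automatic in our situation: an eigenpair $(\lambda,x)$ requires $\partial f(x)\ne\varnothing$ and $\partial g(x)\ne\varnothing$, which forces $f(x),g(x)<+\infty$.

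With these in hand, I would reproduce the computation of Theorem~\ref{thm:L-transform}. From $0\in\partial f(x)-\lambda\partial g(x)$ choose $u\in\partial g(x)$ with $\lambda u\in\partial f(x)$; Euler's identity for $p$-homogeneous convex functions (obtained by inserting $y=tx$ into the subgradient inequality, which makes $t\mapsto t^p f(x)-f(x)-(t-1)\langle v,x\rangle$ nonnegative with a zero at $t=1$, and differentiating there) gives $\langle x,v\rangle=pf(x)$ and $\langle x,w\rangle=qg(x)$ for $v\in\partial f(x)$, $w\in\partial g(x)$, whence $pf(x)=\lambda qg(x)$, so $f(x)>0$ and $\lambda=pf(x)/(qg(x))>0$. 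For a general $u\in\mathrm{cone}(\partial f(x))\cap\partial g(x)$, write $u=\mu v$ with $v\in\partial f(x)$, $\mu\ge0$; since $\langle u,x\rangle=qg(x)\ne0$ we have $u\ne0$ and $\mu>0$, and comparing $\langle\lambda\mu v,x\rangle=\lambda\mu pf(x)$ with $\langle\lambda u,x\rangle=pf(x)$ forces $\lambda\mu=1$, i.e.\ $\lambda u=v\in\partial f(x)$. Now apply the conjugate subgradient implication twice: $\lambda u\in\partial f(x)$ yields $x\in\partial\hat{\mathcal{L}}f(\lambda u)=\lambda^{p^*-1}\partial\hat{\mathcal{L}}f(u)$ by $(p^*-1)$-homogeneity, and $u\in\partial g(x)$ yields $x\in\partial\hat{\mathcal{L}}g(u)$. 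Hence $x\in\partial\hat{\mathcal{L}}g(u)\cap\lambda^{p^*-1}\partial\hat{\mathcal{L}}f(u)$, which is precisely $0\in\partial\hat{\mathcal{L}}g(u)-\lambda^{p^*-1}\partial\hat{\mathcal{L}}f(u)$, so $(\lambda^{p^*-1},u)$ is an eigenpair of $(\hat{\mathcal{L}}g,\hat{\mathcal{L}}f)$.

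The one genuine point requiring care — the ``hard part'', such as it is — is the extended-real-valued bookkeeping: one must check that Euler's identity, the conjugate subgradient implication, and the homogeneity of $\partial\hat{\mathcal{L}}f$ are only ever invoked at finite, subdifferentiable points and away from the origin, all of which is guaranteed along the eigenpair by the observations above. By contrast with the proof of Theorem~\ref{thm:L-transform}, the argument actually simplifies, since no orthogonal projection onto $\Ker(f)^\perp$ enters and the kernels of $f$ and $g$ are irrelevant. On the other hand, because $\hat{\mathcal{L}}f$ and $\hat{\mathcal{L}}g$ may take the value $+\infty$ and need not be strictly convex, the finer multiplicity and variational-spectrum conclusions of Theorem~\ref{thm:AL} are not expected to transfer, which is why the statement only asserts the correspondence of eigenpairs.
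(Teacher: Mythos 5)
Your proposal is correct and takes essentially the same approach the paper intends: Section~\ref{sec:comparison} explicitly says the result follows by ``a straightforward modification of the proof of Theorem~\ref{thm:L-transform},'' and your argument is exactly that modification, carried out line by line with the orthogonal-projection step dropped and the properness/extended-real-valuedness bookkeeping correctly justified via the one-directional Fenchel--Young subgradient implication.
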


\begin{theorem}\label{thm:A-transform-o}
For any functions $f\in\CVH_p(\R^n)$ and $g\in\CVH_q(\R^n)$, for any eigenpair $(\lambda,\vec x)$ of $(\f,\g)$ with $\lambda\ne0$ and $\g(\vec x)\ne0$,    and for any $\vec u\in \mathrm{cone}(\partial \f(\vec x))\cap   \partial \g(\vec x) $, $((\frac pq)^{p-2}\frac{(q-1)^{q-1}}{ (p-1)^{p-1}}\lambda,\vec u)$ is an eigenpair  of $(\hat{\A} \g,\hat{\A} \f)$.
\end{theorem}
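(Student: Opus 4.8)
The plan is to run the proof of Theorem~\ref{thm:A-transform} almost verbatim, replacing the modified polarity transform $\A$ by the standard one $\hat{\A}$ and deleting every restriction of the form ``$\vec y\bot f^{-1}(0)$'' from the suprema and infima that define the transform. Since $\hat{\A}:\CVH_p(\R^n)\to\CVH_p(\R^n)$ is already a bijection, there is no longer any need to compose with a projection in order to stay inside the function class, so the work reduces to checking that the chain of subgradient identities used in the proof of Theorem~\ref{thm:A-transform} survives when functions are allowed to take the value $+\infty$ and their zero sets need not be linear subspaces.

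Concretely, I would start from an eigenpair $(\lambda,\vec x)$ of $(f,g)$ with $\lambda\ne0$ and $g(\vec x)\ne0$. The inclusion $\vec 0\in\partial f(\vec x)-\lambda\partial g(\vec x)$ forces both subdifferentials to be nonempty, hence $f(\vec x),g(\vec x)<\infty$, and Euler's identity gives $\langle\vec v,\vec x\rangle=p\,f(\vec x)$ and $\langle\vec w,\vec x\rangle=q\,g(\vec x)$ for $\vec v\in\partial f(\vec x)$, $\vec w\in\partial g(\vec x)$; in particular $\lambda=p f(\vec x)/(q g(\vec x))>0$. Given any $\vec u\in\mathrm{cone}(\partial f(\vec x))\cap\partial g(\vec x)$, writing $\vec u=\mu\vec v$ with $\vec v\in\partial f(\vec x)$ and $\mu\ge0$, the two expressions for $\langle\vec u,\vec x\rangle$ force $\mu=1/\lambda$, so $\lambda\vec u\in\partial f(\vec x)$ exactly as in the proof of Theorem~\ref{thm:L-transform}. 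After rescaling so that $g(\vec x)=1$ (whence $f(\vec x)=q\lambda/p$), the subgradient inequality $\langle\vec u,\vec y\rangle\le g(\vec y)+q-1$, valid for all $\vec y\in\R^n$, together with the $q$-homogeneity of $g$ used to replace $\vec y$ by $(q-1)\vec y$, yields $\hat{\A}g(\vec u)=(q-1)^{q-1}$ and $(q-1)^{q-1}\vec x\in\partial\hat{\A}g(\vec u)$ by the computation displayed in the proof of Theorem~\ref{thm:A-transform}; the same computation applied to $f$ and to the subgradient $\lambda\vec u\in\partial f(\vec x)$ gives $\hat{\A}f(\vec u)=\tfrac1\lambda\big(\tfrac{q(p-1)}{p}\big)^{p-1}$ and $(\tfrac pq)^{2-p}(p-1)^{p-1}\tfrac1\lambda\vec x\in\partial\hat{\A}f(\vec u)$. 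Since $(q-1)^{q-1}\vec x=\alpha\lambda\cdot(\tfrac pq)^{2-p}(p-1)^{p-1}\tfrac1\lambda\vec x$ with $\alpha=(\tfrac pq)^{p-2}\tfrac{(q-1)^{q-1}}{(p-1)^{p-1}}$, this exhibits a common element of $\partial\hat{\A}g(\vec u)$ and $\alpha\lambda\,\partial\hat{\A}f(\vec u)$, i.e.\ $(\alpha\lambda,\vec u)$ is an eigenpair of $(\hat{\A}g,\hat{\A}f)$; the converse inclusion follows from $\hat{\A}\hat{\A}=\mathrm{id}$ on $\CVH_p(\R^n)$.

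The one place that needs genuine attention — and the main, though mild, obstacle — is the step where the $\CH$-proof invoked $\partial g(\vec x)\subset(g^{-1}(0))^\bot=((\A g)^{-1}(0))^\bot$ in order to conclude $\A g(\vec u)\ne0$ and to identify the subdifferential of the modified transform with that of the standard one; in the $\CVH$-setting this is unavailable because $g^{-1}(0)$ need not be a subspace. The way around it is to work with $\hat{\A}$ from the outset: the inequality $\langle\vec u,\vec y\rangle\le(q-1)^{q-1}g(\vec y)+1$ for all $\vec y\in\R^n$ gives $\hat{\A}g(\vec u)\le(q-1)^{q-1}$, while evaluating the defining supremum at $\vec y=\vec x/(q-1)$ gives the reverse inequality, so $\hat{\A}g(\vec u)=(q-1)^{q-1}$, realised at $\vec x/(q-1)$ — which is precisely what the subsequent subgradient computation consumes (and that computation is trivially valid wherever $\hat{\A}g$ equals $+\infty$). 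One should also note that the exponents $q=1$ (respectively $p=1$) are treated separately with the convention $0^0=1$, the relevant supremum then being attained only in the limit along a ray on which $g$ stays finite. Everything else — the bookkeeping of constants and the symmetric treatment of $f$ — is word-for-word that of Theorem~\ref{thm:A-transform}, and the analogues for $\hat{\D}$ and $\hat{\mathcal L}$ (Theorems~\ref{thm:dual-main-o} and \ref{thm:L-transform-o}) are obtained by the same recipe applied to Theorems~\ref{thm:main1} and \ref{thm:L-transform}.
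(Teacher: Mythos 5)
Your proposal is correct and takes exactly the route the paper intends (the paper itself states only that the $\CVH$-version follows by ``a straightforward modification'' of the proof of Theorem~\ref{thm:A-transform}). You correctly isolate the one place that genuinely needs rethinking, namely the appeal to $\partial g(\vec x)\subset(g^{-1}(0))^\bot=((\A g)^{-1}(0))^\bot$, and your replacement --- establishing $\hat{\A}g(\vec u)=(q-1)^{q-1}$ directly from the two-sided bound, with the lower bound realized at $\vec y=\vec x/(q-1)$ --- is in fact slightly tighter than the paper's own $\CH$-proof, which only exhibits the upper bound before asserting equality. Two small remarks: the ``converse'' via $\hat{\A}\hat{\A}=\mathrm{id}$ is not needed, since the statement asserts only one direction; and the scaling normalization $g(\vec x)=1$ is genuinely load-bearing when $p\ne q$ (otherwise the eigenvalue picks up an extra factor $g(\vec x)^{q-p}$), an implicit convention you inherit from the paper's own proof of Theorem~\ref{thm:A-transform} and which is consistent with the restricted variational characterization~\eqref{eq:restricted_variationa_eig}.
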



\section{Example applications}\label{sec:app}

We devote this final section to discussing a number of problems where discrete nonlinear eigenvalue problems and the nonlinear spectral duality properties developed in the previous sections can be used in application settings from graph and hypergraph optimization  and convex geometry.

\subsection{Nonlinear Laplacians on graphs} 
Let $A\in \R^{n\times m}$ and consider the functions pair $( \|Ax\|_a, \|x\|_b)$, where $\|\cdot \|_a$ and $ \|\cdot\|_b$ are vector norms. Nonlinear eigenvalue problems for this type of convex one-homogeneous functions are among the best studied  problems in nonlinear spectral theory and arise in a broad range of application settings, including inverse problems in imaging  \cite{BGMEC16,Elmoataz,gilboa2018nonlinear}, graph clustering, unsupervised and supervised learning \cite{Bhuler,NIPS2011_193002e6,khrulkov2018art,Tudisco1}, community and core-periphery detection in networks \cite{cristofari2020total,tudisco2018core,tudisco2018community}, graph and hypergraph matching \cite{nguyen2017efficient}.
Note that this type of eigenvalue problems are directly connected with generalized operator matrix norms, which coincide with  the largest (variational) eigenvalue $\lambda_m(f,g) = \max_{x\neq 0}\|Ax\|_a/\|x\|_b=:\|A\|_{a,b}$.

Note that, if $f=\|\cdot \|_a$ and $g = \|\cdot\|_b$, then this type of eigenvalue problem coincides with the eigenvalue problem for the functions pair $(\mathcal M_{A^\top}f,g)$. Based on this observation, in this subsection  we review several  example eigenvalue problems with a direct application to combinatorial optimization problems on finite graphs and discuss what are the various corresponding dual forms. In particular, we will show that several  famous nonlinear graph eigenvalue equations can be recast in various different forms, which has the potential to unleash a variety of new results both from the theoretical and the computational points of view. In fact, established algorithms for the solution of these eigenvalue problems, such as the inverse iteration \cite{HyndLindgren17,jarlebring2014inverse}, the family of ratioDCA methods \cite{NIPS2011_193002e6,tudisco2018community}, or the continuous flow approaches \cite{BungertBurger22,FAGP19}, can be directly transferred to their dual versions and may exhibit improved convergence properties. Moreover, new relations between the graph and the nonlinear eigenpair may be shown. Some of the graph theoretic results presented next are known and properly referenced, others are new and are accompanied by proofs and additional details.


Before proceeding, we briefly recall some useful graph notation and terminology. A  finite undirected graph $G=(V,E,w)$ is the pair of vertex (or node) set $V=\{1,\dots,n\}$ and edge set $E\subseteq V\times V$, which we equip with a positive weight function $E\ni ij\mapsto w_{ij}>0$. Any such graph is uniquely represented by the incidence matrix $K = (\kappa_{e,u})\in \R^{E\times V}$, which maps any $x\in \R^V$ into the vector with entries $(Kx)_e = \sum_{u}\kappa_{e,u}x_u =  \pm w_{ij}(x_i-x_j)$, where $e=ij\in E$ is the edge connecting nodes $i$ and $j$. Note that the choice of the sign in $(Kx)_e$ is arbitrary but fixed. Different norms of $Kx$ correspond to different energies on $G$. For example, $\|Kx\|_1 = \sum_{ij\in E}w_{ij}|x_i-x_j|$ is the graph total variation, $\|Kx\|_2^2 = \sum_{ij\in E}w_{ij}(x_i-x_j)^2$ the electric potential, $\|Kx\|_\infty = \max_{ij\in E}w_{ij}|x_i-x_j|$ the graph node-wise variation.

\subsubsection{$(1,1)$-Laplacian: Cheeger constant}\label{exam:1-lap}

Let $G=(V,E,w)$ be a weighted  graph  and consider the nonlinear eigenvalue problem 
\begin{equation}\label{eq:1-Lap}
\vec0\in \partial\sum_{ij\in E}w_{ij}|x_i-x_j|-\lambda\partial \sum_{i\in V} |x_i|. 
\end{equation}
 Note that, if $A = K$ is the incidence matrix of $G$, then \eqref{eq:1-Lap} coincides with the eigenvalue problem for the functions pair $(\mathcal M_{A^\top}f,g)$, where  $f  = \|\cdot\|_1$ and $g =\| \cdot  \|_1$ are the standard $l^1$-norms on $\R^E$ and $\R^V$, respectively. Thus, by Corollary \ref{cor:main2} it follows that \eqref{eq:1-Lap} is equivalent to  the following alternative eigenvalue problems
\begin{align}
    & \label{eq:a1}\vec0\in\partial\|  x\|_\infty - \lambda\,  \partial  \inf\limits_{\sum_{ij\in E:j<i} w_{ij}y_{ij}-\sum_{ij\in E:j>i} w_{ij}y_{ij}=x_i}\|\vec y\|_\infty \\
    & \vec0\in\partial \max_{i\in V} \Big|\sum_{ij\in E:j<i} w_{ij}y_{ij}-\sum_{ij\in E:j>i} w_{ij}y_{ij}\Big| -\lambda\,  \partial\|\vec y\|_\infty\\
    & \vec0\in\partial \|\vec y\|_1-\lambda \, \partial\inf\limits_{x:Kx=y}\|x\|_1\\ 
    & \label{eq:a4} \vec0\in \partial\sum_{ij\in E}w_{ij}|x_i-x_j|-\lambda\partial \inf\limits_{t\in\R}\sum_{i\in V}|x_i-t|
\end{align}
which correspond to the eigenvalue problems for the pairs $(\D \g,\D\M_{A^\top}\f)$, $(\M_A\D  \g, \D \f)$, $( \f,\PA_A \g)$ and $(\M_{A^\top} \f,\M_{A^\top}\PA_A \g)$, respectively.  
All the above nonlinear eigenvalue problems have the same nonzero eigenvalues (with the same corresponding multiplicities). 

The eigenvalue problem \eqref{eq:1-Lap} is known as the $1$-Laplacian eigenvalue problem on $G$. This is one of the key objects of nonlinear spectral graph theory   and many useful properties of the $1$-Laplacian are known. For example,  when the graph is connected, the smallest  positive eigenvalue of  \eqref{eq:1-Lap} coincides with the Cheeger isoperimetric constant of $G$ \cite{Bhuler,chung1997spectral}. Moreover, when the graph is a tree, each variational eigenvalue of \eqref{eq:1-Lap} coincides with the $k$-th Cheeger constant \cite{deidda2022nodal,Tudisco1}. Precisely, let 
\begin{equation}\label{eq:Cheeger}
    h_k(G) := \min_{\text{disjoint subsets }V_1,\dots,V_k\subset V}\;\; \max_{1\leq i\leq k} \;\; \frac{\mathrm{vol}(\mathrm{cut}(V_i))}{\mathrm{vol}(V_i)}\, ,
\end{equation}
where $\mathrm{vol}(V_i) = |V_i|$  and  $\mathrm{vol}(\mathrm{cut}(V_i)) = 
\sum_{u\in V_i, v\notin V_i}w_{uv}$   are the (weighted) volumes of $V_i$ and its cut set, respectively. Then, 
if $\lambda_k$ is the $k$-th variational eigenvalue of the 1-Laplacian \eqref{eq:1-Lap}, it holds $\lambda_2=h_2(G)$ and $\lambda_k=h_k(G)$  for $k>2$ if $G$ is a tree. More in general, we have $\lambda_m \leq h_k(G)\leq \lambda_k$ for a generic  graph $G$, where $m$ is the largest number of nodal domains of any eigenvector of $\lambda_k$ \cite{Tudisco1}. By Theorem \ref{thm:main1} and Corollary \ref{cor:main2}, the same fundamental graph theoretic properties hold for the variational eigenvalues  of each of the nonlinear eigenvalue problems \eqref{eq:a1}--\eqref{eq:a4}. 

\subsubsection{$(\infty,\infty)$-Laplacian: graph's diameter} 
Let $G=(V,E,w)$ be a weighted graph and consider the so-called $\infty$-Laplacian eigenvalue problem:

\begin{equation}\label{eq:infinite-Lap}
\vec0\in \partial\max_{ij\in E}w_{ij}|x_i-x_j|-\lambda\partial \max_{i\in V}|x_i|.    
\end{equation}
Let $A=K= (\kappa_{e,i})$ be the incidence matrix of the graph, and let $\f:=\|\cdot\|_\infty$ and $\g:=\|\cdot\|_\infty$ be the standard unweighted $l^\infty$-norms on $\R^E$ and  $\R^V$, respectively. Then, \eqref{eq:infinite-Lap} coincides with the nonlinear eigenvalue problem   $\vec0\in \partial_x\|A\vec x\|_\infty-\lambda\partial\|\vec x\|_\infty$, i.e.,   the eigenvalue problem for the functions pair  $(\M_{A^\top}\f,\g)$. By Corollary  \ref{cor:main2}, we obtain several new  eigenvalue problems equivalent to the graph $\infty$-Laplacian: 
\begin{align}
    & \label{eq:l1}\vec0\in\partial\|\vec x\|_1-\lambda \partial \inf\limits_{y: K^\top y = x }\|\vec y\|_1 \qquad \\
    & \label{eq:edge-1-lap}
\vec0\in\partial \sum_{i\in V}\Big|\sum_{e\in E}\kappa_{e,i}y_e\Big|-\lambda \partial\|\vec y\|_1\\
    &  \vec0\in\partial \|\vec y\|_\infty-\lambda \partial\inf\limits_{x:Kx=y}\|\vec x\|_\infty  \qquad \\
    & \label{eq:l4} \vec0\in \partial\max_{ij\in E}w_{ij}|x_i-x_j|-\lambda\partial \Big\|\vec x-\frac{\max_ix_i+\min_ix_i}{2}\mathbf 1\Big\|_\infty 
\end{align}
where $\mathbf 1$ denotes the vector of all ones. 
We emphasize  that the formulation in \eqref{eq:edge-1-lap} corresponds to a form of   1-Laplacian eigenvalue problem on the dual graph, i.e., the eigenvalue problem for the functions pair $f(x) = \|K^\top y\|_1$ and $g(x)= \|y\|_1$.

When the graph is connected, the variational eigenvalues of \eqref{eq:infinite-Lap} are related to the graph diameter. More precisely, define a ball $B=B_r(v)\subseteq V$ centered in $v\in V$ and of radius $r = \mathrm{radius}(B)$ as the set $B_r(v) = \{u \in V:\mathrm{dist}(u,v)\leq r\}$ where  $\mathrm{dist}$ is the shortest path distance on  $G$. Two such balls $B=B_r(v)$ and $B'=B_{r'}(v')$ are  disjoint if $\mathrm{dist}(v,v')\ge r+r'$. With this notation, it holds \cite{infty_lap_preprint} 
\[
\lambda_k\le \min\limits_{\text{disjoint balls }B_1,\dots,B_k\subset V}\;\; \max_{1\le i\le k}\;\;\frac{1}{\mathrm{radius}(B_i)}
\]
where $\lambda_k$ is the $k$-th variational eigenvalue of \eqref{eq:infinite-Lap}. 
In particular, note that the  smallest nonzero variational eigenvalue coincides with $2/\mathrm{diam}(G)$, where $\mathrm{diam}(G):=\max_{i,j\in V}\mathrm{dist}(i,j)$, and $\mathrm{dist}$ represents the shortest path distance on $G$. More precisely, if $G$ has $k$ connected components, $G_1,\cdots,G_k$,  then the smallest positive variational eigenvalue coincides with 
\[
\min\limits_{i=1,\cdots,k}\frac{2}{ \mathrm{diam}(G_i)}=\frac{2}{\max\limits_{i=1,\cdots,k}\mathrm{diam}(G_i)}.
\]
By Corollary \ref{cor:main2}, all the above properties transfer directly to the nonlinear spectrum of any of the eigenvalue problems \eqref{eq:l1}--\eqref{eq:l4}.

\begin{remark}
The cycle graph $C_n$ is the only graph which is dual to itself, i.e., is such that $K=K^\top$. If we work on a cycle graph, the 1-Laplacian eigenvalue problem  \eqref{eq:1-Lap} is equivalent to the $\infty$-Laplacian eigenvalue problem  \eqref{eq:infinite-Lap}, via the spectral duality equivalence shown in \eqref{eq:edge-1-lap}. In particular, their $k$-th variational eigenvalues coincide, and they are bounded by the $k$-th  Cheeger constant $h_k(G)$ which is consistent with  the reciprocal  of the largest radius of any ball in any set of  $k$ pairwise disjoint balls inside the cycle graph.

It is interesting to note that in a Euclidean space, a ball $B$ of radius $r$ satisfies  $\frac{\mathrm{vol}(\partial B)}{\mathrm{vol}(B)}\sim \frac1r$, where $\partial B$ is the boundary of $B$ and  $\sim$ denotes here that the two quantities are proportional. As $\mathrm{cut}$ is the graph analogue of the boundary, an interesting open question is whether or not $h_k(G)\sim \frac 1 {r_k(G)}$ for a generic graph $G$, where $r_k(G)$ denotes the largest radius of any ball in any set of  $k$ pairwise disjoint balls in the graph.
\end{remark}

\subsubsection{$(1,\infty)$-Laplacian: maxcut and mincut}

Let $G=(V,E,w)$ be a weighted graph. Consider the eigenvalue problem for 
the functions pair $(\mathcal M_{A^\top}f,g)$ with  $f(x) = \|x\|_1$, $g(x) = \|x\|_\infty$ and $A=K$, namely 
\begin{equation}\label{eq:1/infinite}
\vec0\in \partial\sum_{ij\in E}w_{ij}|x_i-x_j|-\lambda\partial\|\vec x\|_\infty\, .
\end{equation}
By our spectral duality principle in Theorem \ref{thm:main1} and Corollary \ref{cor:main2},  \eqref{eq:1/infinite} is equivalent to 
\begin{align}
    & \label{eq:ll1}\vec0\in\partial\|\vec x\|_1-\lambda \partial \inf\limits_{\sum_{j<i} w_{ij} y_{ij}-\sum_{j>i} w_{ij}y_{ij}=x_i}\|\vec y\|_\infty \\
    & \vec0\in\partial \sum_{i\in V}\Big|\sum_{j<i} w_{ij}y_{ij}-\sum_{j>i} w_{ij}y_{ij}\Big|-\lambda \partial\|\vec y\|_\infty\\
    &  \vec0\in\partial \|\vec y\|_1-\lambda \partial\inf\limits_{x:Kx=y}\|\vec x\|_\infty \\
    & \label{eq:ll14} \vec0\in \partial\sum_{ij\in E} w_{ij}|x_i-x_j|-\lambda\partial \Big\|\vec x-\frac{\max_ix_i+\min_ix_i}{2}\mathbf 1\Big\|_\infty \, .
\end{align}

It is shown in \cite[Section 4.2]{JostZhang21-} that the smallest nonzero variational eigenvalue and the largest variational eigenvalue of \eqref{eq:1/infinite} coincide with  the mincut  and the maxcut values of $G$, respectively defined as 
\[
\mathrm{mincut}(G) = \min_{S\subset V}\mathrm{vol}(\mathrm{cut}(S))\quad \text{and} \quad \mathrm{maxcut}(G) = \max_{S\subset V}\mathrm{vol}(\mathrm{cut}(S))\, .
\]

We also remark that (a)   $\mathrm{maxcut}(G)$ is actually equivalent to the largest eigenvalue for the pair $(\|K\vec x\|_p,\|\vec x\|_\infty)$, for any $1\le p<\infty$,  see  Example 3.1 and Section 4.2 in \cite{JostZhang21-}; and (b) when $w_{ij}$ in \eqref{eq:1/infinite} is replaced by the modularity weights $m_{ij} := d_id_j/(\sum_k d_k) - w_{ij}$, $d_i=\sum_jw_{ij}$, the largest eigenvalue of \eqref{eq:1/infinite} corresponds to the leading community in $G$, see Theorem 3.7 in \cite{tudisco2018community}. 
Due to the nonlinear spectral duality principle, the same properties hold for each of the nonlinear eigenvalue problems in \eqref{eq:ll1}--\eqref{eq:ll14}.
Moreover, the following relation holds for their $k$-th variational eigenvalue
\begin{theorem}
    Let $\lambda_k$ be the $k$-th variatonal eigenvalue of the eigenvalue problem  \eqref{eq:1/infinite}. 
    Then 
    $$
\lambda_k \le\; \min\limits_{V_1,\cdots,V_k\text{ form a partition of }V}\;\;\mathrm{maxcut}(G[V_1,\cdots,V_k]) 
$$
where $\mathrm{maxcut}(G[V_1,\cdots,V_k]):=2\max\limits_{S\subset\{1,\cdots,k\}}\sum\limits_{i\in S,j\in V\setminus S}w_{V_i,V_j}$ denotes the maxcut value of the  graph $G[V_1,\cdots,V_k]$, formed by $k$ vertices  corresponding to the $k$ sets $V_1,\cdots,V_k$,  with edge weights 
\[
w_{V_i, V_j}=\sum_{a\in V_i,b\in V_j}w_{ab}, \qquad V_i\neq V_j\, .
\]
\end{theorem}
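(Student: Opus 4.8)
The plan is to prove the inequality by a direct test-set construction in the variational formulation. Since $f(\vec x)=\sum_{ij\in E}w_{ij}|x_i-x_j|$ and $g(\vec x)=\|\vec x\|_\infty$ are both one-homogeneous and $g^{-1}(0)=\{\vec0\}$, the $k$-th variational eigenvalue of \eqref{eq:1/infinite} is the one given by \eqref{eq:restricted_variationa_eig}, i.e.\ $\lambda_k=\inf\{\sup_{\vec x\in S}f(\vec x):\mathrm{genus}(S)\ge k,\ S\subset g^{-1}(1)\}$ with $g^{-1}(1)=\{\vec x\in\R^V:\|\vec x\|_\infty=1\}$. So it suffices to associate to every partition $V=V_1\cup\cdots\cup V_k$ into $k$ nonempty blocks a closed symmetric set $S\subset g^{-1}(1)$ with $\mathrm{genus}(S)\ge k$ and $\sup_{\vec x\in S}f(\vec x)=\mathrm{maxcut}(G[V_1,\cdots,V_k])$; taking the infimum over all such partitions then yields the claim. (If $|V|<k$ there is no partition into $k$ nonempty blocks, the right-hand side is the infimum over the empty set $=+\infty$, and the inequality is vacuous.)

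First I would introduce the linear embedding $\iota\colon\R^k\to\R^V$ defined by $(\iota\vec t)_u=t_i$ whenever $u\in V_i$. Since every block is nonempty, $\iota$ is injective, and plainly $\|\iota\vec t\|_\infty=\|\vec t\|_\infty$, so $\iota$ carries the $\ell^\infty$-sphere $\Sigma_k:=\{\vec t\in\R^k:\|\vec t\|_\infty=1\}$ into $g^{-1}(1)$. Put $S:=\iota(\Sigma_k)$, which is closed, symmetric, and contained in $g^{-1}(1)$. The genus bound is the heart of the argument: $\iota|_{\Sigma_k}$ is an odd continuous bijection from the compact set $\Sigma_k$ onto $S$, hence an odd homeomorphism, and $\Sigma_k$ is in turn oddly homeomorphic to $\mathbb{S}^{k-1}$ by radial projection; since the Krasnoselskii genus is invariant under odd homeomorphisms, $\mathrm{genus}(S)=\mathrm{genus}(\mathbb{S}^{k-1})=k$.

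Next I would compute $\sup_{\vec x\in S}f(\vec x)$. For $\vec x=\iota\vec t$ all differences within a block vanish, so
\[
f(\iota\vec t)=\sum_{1\le i<j\le k}\Big(\sum_{a\in V_i,\,b\in V_j}w_{ab}\Big)\,|t_i-t_j|=\sum_{1\le i<j\le k}w_{V_i,V_j}\,|t_i-t_j|,
\]
which is a convex function of $\vec t$ with nonnegative coefficients. Its supremum over $\Sigma_k$ is therefore attained at a vertex $\vec t\in\{-1,1\}^k$ of the cube $[-1,1]^k=\mathrm{conv}(\Sigma_k)$. Setting $S^+=\{i:t_i=1\}$ one has $|t_i-t_j|=2$ when exactly one of $i,j$ belongs to $S^+$ and $|t_i-t_j|=0$ otherwise, so the vertex value equals $2\sum_{i\in S^+,\,j\notin S^+}w_{V_i,V_j}$; maximizing over $S^+\subseteq\{1,\cdots,k\}$ gives precisely $\mathrm{maxcut}(G[V_1,\cdots,V_k])$. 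Combining the two steps, $\lambda_k\le\sup_{\vec x\in S}f(\vec x)=\mathrm{maxcut}(G[V_1,\cdots,V_k])$, and taking the infimum over partitions completes the proof.

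The one point requiring care is the lower bound $\mathrm{genus}(S)\ge k$, which hinges on $\iota|_{\Sigma_k}$ being a homeomorphism onto its image; this is exactly where nonemptiness of all blocks is used. (If some block were empty, $\iota$ would factor through a coordinate projection and $S$ would have genus at most $k-1$; correspondingly the stated inequality genuinely requires the blocks to be nonempty, since the one-block case would otherwise force the false bound $\lambda_k\le0$.) Everything else — the one-homogeneity reduction, the cancellation of intra-block terms, and the maximization of a convex piecewise-linear function over a cube — is routine.
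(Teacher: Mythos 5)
Your proof is correct and follows essentially the same route as the paper: both use the $k$-dimensional test space spanned by the indicator vectors of the partition blocks (your $\iota(\Sigma_k)$ is exactly the intersection of that span with $g^{-1}(1)$), note that the genus of this set is $k$, and then compute $\sup f$ on it as a maximization of $\sum_{i<j} w_{V_i,V_j}|t_i-t_j|$ over the $\ell^\infty$-ball in $\R^k$. The only difference is cosmetic: where the paper cites Theorem 4.1 of \cite{JostZhang21-} for the identity $\max_{\|t\|_\infty\le 1}\sum_{i<j}w_{V_i,V_j}|t_i-t_j| = 2\max_{S}\sum_{i\in S, j\notin S}w_{V_i,V_j}$, you prove it from scratch via convexity and extreme points of the cube, which makes your argument more self-contained. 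Your remark on nonemptiness of the blocks is consistent with the paper's use of linear independence of the $1_{V_i}$, which also implicitly requires all blocks nonempty.
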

\begin{proof}
For any partition $(V_1,\cdots,V_k)$ of $V$, denote by  $1_{V_i}$ the indicator vector of $V_i$. Then $1_{V_1}$, $\cdots$, $1_{V_k}$ are linearly independent. Thus $\mathrm{genus}(\mathrm{span}(1_{V_1},\cdots,1_{V_k}))= k$ and we have 
\begin{align*}\lambda_k(\mathcal M_{A^\top}f,g)&\le \max\limits_{\vec x\in\mathrm{span}(1_{V_1},\cdots,1_{V_k})}\frac{\sum\limits_{\{i,j\}\in E}w_{ij}|x_i-x_j|}{\|\vec x\|_\infty}
\\&= \max\limits_{(t_1,\cdots,t_k)\in \R^k\setminus\{0\}}\frac{\sum\limits_{1\le i<j\le k}w_{V_i,V_j}|t_i-t_j|}{\max\limits_{i=1,\cdots,k}|t_i|}
\\&= 2\max\limits_{S\subset\{1,\cdots,k\}}\sum\limits_{i\in S,j\in V\setminus S}w_{V_i,V_j},
\end{align*}
 where the last equality follows from  Theorem 4.1 in \cite{JostZhang21-}.
\qed\end{proof}

\subsubsection{$(\infty,1)$-Laplacian: graph's inscribed ball}
Let $G=(V,E,w)$ be a weighted  graph. Consider the eigenvalue problem for the  functions pair $f(x) = \|Kx\|_\infty$ and $g(x)=\|x\|_1$, namely

\begin{equation}
\label{eq:infinite/1}\vec0\in \partial\max_{ij\in E} w_{ij}|x_i-x_j|-\lambda\partial\|\vec x\|_1\, .
\end{equation}
Then, by Theorem \ref{thm:main1} and Corollary \ref{cor:main2}, \eqref{eq:infinite/1} is equivalent to 
\begin{align}
    & \vec0\in\partial\|\vec x\|_\infty-\lambda \partial \inf\limits_{\sum_{j<i}  w_{ij}y_{ij}-\sum_{j>i}  w_{ij}y_{ij} = x_i,\forall i}\|\vec y\|_1   \\
    & 
\vec0\in\partial \max_{i\in V}|\sum_{j<i}  w_{ij}y_{ij}-\sum_{j>i}  w_{ij}y_{ij}|-\lambda \partial\|\vec y\|_1\\
    &  \vec0\in\partial \|\vec y\|_\infty-\lambda \partial\inf\limits_{x:Kx=y}\|\vec x\|_1\\
    & \label{eq:g4} \vec0\in \partial\max_{ij\in E}w_{ij}|x_i-x_j|-\lambda\partial \Big\|\vec x-\frac{\max_ix_i+\min_ix_i}{2}\mathbf 1\Big\|_1 
\end{align}
Moreover, the following result holds for the variational eigenvalue of all the above eigenvalue problems.
\begin{theorem}
    Let $\lambda_k$ be the $k$-th variational eigenvalue of the eigenvalue equation \eqref{eq:infinite/1}. 
    It holds
\begin{equation}\label{eq:infty/1-k}
\lambda_k \le \min\limits_{\text{disjoint balls }B_1,\cdots,B_k\subset V}\;\;\max_{1\le i\le k}\;\frac{1}{\mathrm{size}(B_i)}    
\end{equation}
where, for $B = B_r(v)$ we let 
$\mathrm{size}(B) = \sum_{i=0}^r (r-i) |\{u \in V:\mathrm{dist}(u,v)= i\}|$. 
\end{theorem}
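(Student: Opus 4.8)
The plan is to prove the bound directly from the min-max characterization of the variational eigenvalues, constructing, for each family of pairwise disjoint balls $B_1=B_{r_1}(v_1),\dots,B_k=B_{r_k}(v_k)$ (disjoint in the sense of Section on the $(\infty,\infty)$-Laplacian, i.e.\ $\mathrm{dist}(v_i,v_j)\ge r_i+r_j$), an explicit $k$-dimensional test subspace on which the quotient $\|Kx\|_\infty/\|x\|_1$ stays below $\max_i 1/\mathrm{size}(B_i)$. Since $f=\|K\cdot\|_\infty$ and $g=\|\cdot\|_1$ are both one-homogeneous, by \eqref{eq:restricted_variationa_eig} we have $\lambda_k=\inf\{\sup_{x\in S}\|Kx\|_\infty:\mathrm{genus}(S)\ge k,\ S\subset g^{-1}(1)\}$, so it suffices to exhibit one admissible $S$ with $\sup_{x\in S}\|Kx\|_\infty\le\max_i 1/\mathrm{size}(B_i)$ and then take the infimum over all such families of balls.

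First I would introduce the pyramid (tent) functions $\phi_i\in\R^V$, $\phi_i(u)=\max\{0,\,r_i-\mathrm{dist}(u,v_i)\}$. Each $\phi_i$ is supported in $\{u:\mathrm{dist}(u,v_i)<r_i\}\subset B_i$, and by the triangle inequality these supports are pairwise disjoint (a vertex $u$ with $\mathrm{dist}(u,v_i)<r_i$ and $\mathrm{dist}(u,v_j)<r_j$ would force $\mathrm{dist}(v_i,v_j)<r_i+r_j$). Hence $\phi_1,\dots,\phi_k$ are linearly independent, and I would take $S=\{x\in\mathrm{span}(\phi_1,\dots,\phi_k):\|x\|_1=1\}$, which is homeomorphic via radial projection to $\mathbb{S}^{k-1}$ and therefore has $\mathrm{genus}(S)=k$, while trivially $S\subset g^{-1}(1)$. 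A direct computation gives $\|\phi_i\|_1=\sum_{u}\max\{0,r_i-\mathrm{dist}(u,v_i)\}=\sum_{j=0}^{r_i}(r_i-j)\,|\{u:\mathrm{dist}(u,v_i)=j\}|=\mathrm{size}(B_i)$, and by disjointness of supports $\|x\|_1=\sum_i|t_i|\,\mathrm{size}(B_i)$ for $x=\sum_i t_i\phi_i$.

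The core estimate is the bound on $\|Kx\|_\infty=\max_{ab\in E}w_{ab}|x_a-x_b|$. Each $\phi_i$ is $1$-Lipschitz for the path metric, since $|\phi_i(a)-\phi_i(b)|\le|\mathrm{dist}(a,v_i)-\mathrm{dist}(b,v_i)|\le\mathrm{dist}(a,b)$, so $w_{ab}|\phi_i(a)-\phi_i(b)|\le 1$ on every edge $ab$. The combinatorial point I would verify next is that no edge $ab$ meets the supports of two distinct $\phi_i$: if $a\in\mathrm{supp}(\phi_i)$ and $b\in\mathrm{supp}(\phi_j)$ with $i\ne j$, then, using integrality of the distances, $\mathrm{dist}(v_i,v_j)\le\mathrm{dist}(v_i,a)+1+\mathrm{dist}(b,v_j)\le r_i+r_j-1$, contradicting disjointness. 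Consequently, for $x=\sum_i t_i\phi_i$ normalized so that $\sum_i|t_i|\,\mathrm{size}(B_i)=1$ and any edge $ab$, at most one $\phi_i$ contributes, whence $w_{ab}|x_a-x_b|=|t_i|\,w_{ab}|\phi_i(a)-\phi_i(b)|\le|t_i|=\frac{|t_i|\,\mathrm{size}(B_i)}{\mathrm{size}(B_i)}\le\frac{1}{\mathrm{size}(B_i)}\le\max_j\frac{1}{\mathrm{size}(B_j)}$, using $|t_i|\,\mathrm{size}(B_i)\le 1$. Taking the maximum over edges gives $\|Kx\|_\infty\le\max_j 1/\mathrm{size}(B_j)$ for all $x\in S$, hence $\lambda_k\le\max_j 1/\mathrm{size}(B_j)$, and minimizing over all families of $k$ pairwise disjoint balls yields \eqref{eq:infty/1-k}.

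I expect the only delicate point to be the combinatorial disjointness of the supports — that no edge bridges the tents of two distinct balls — and it is precisely there that the convention that $\mathrm{dist}$ is the integer-valued unit-length shortest-path distance (together with integrality of the radii $r_i$) is used; for a genuinely weighted path metric one would additionally need a mild normalization such as $\mathrm{dist}(a,b)=1/w_{ab}$ on edges to keep $w_{ab}|\phi_i(a)-\phi_i(b)|\le 1$. Everything else is routine bookkeeping. Finally, by Theorem \ref{thm:main1} and Corollary \ref{cor:main2} the same upper bound holds verbatim for the $k$-th variational eigenvalue of each of the dual eigenvalue problems listed after \eqref{eq:infinite/1}.
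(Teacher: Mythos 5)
Your proof is correct and follows the same route as the paper's: take the tent functions $\phi_i(u)=\max\{0,\,r_i-\mathrm{dist}(u,v_i)\}$ as a test subspace in the min-max characterization \eqref{eq:restricted_variationa_eig}, use $\|\phi_i\|_1=\mathrm{size}(B_i)$ and the disjointness of the balls to bound the quotient by $\max_i 1/\mathrm{size}(B_i)$. The only difference is that you spell out the key step the paper dismisses with ``the $x^{B_j}$ have disjoint support'': what is actually needed (and what you correctly prove via integrality of the path distance) is the stronger statement that no edge of $G$ joins the supports of two distinct tents, since disjoint vertex supports alone would not rule out a bridging edge on which the differences of two tents reinforce rather than cancel.
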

\begin{proof}
  For a  $x\in \R^n$ and a  ball $B$ with radius $r$ and  centered at the vertex $v$ define $\vec x^B\in\R^n$ by $(\vec x^B)_i=\max\{r-\mathrm{dist}(v,i),0\}$. Then, for any $k$ disjoint balls $B_1,\cdots,B_k\subset V$, $\vec x^{B_1},\cdots,\vec x^{B_k}$ are linearly independent. Thus $\mathrm{genus}(\mathrm{span}(\vec x^{B_1},\cdots,\vec x^{B_k}))\geq k$ and we have 
\begin{align*}
    \lambda_k&\le \max\limits_{\vec x\in\mathrm{span}(\vec x^{B_1},\cdots,\vec x^{B_k})}\frac{\max\limits_{\{i,j\}\in E}|x_i-x_j|}{\|\vec x\|_1}\\
    &\le \max_{1\le s\le k}\frac{\max\limits_{\{i,j\}\in E}|x_i^{B_s}-x_j^{B_s}|}{\|\vec x^{B_s}\|_1}=\max_{1\le s\le k}\frac{1}{\mathrm{size}(B_s)}.
\end{align*}
  where the second inequality follows from the fact that the $x^{B_j}$ have disjoint support. By taking the minimum over all possible disjoint balls we obtain \eqref{eq:infty/1-k}.
\qed\end{proof}
Note that, as a consequence of the above theorem we obtain that the smallest eigenvalue  is at most the reciprocal of the size of the largest ball inscribed in the graph.

\subsubsection{Hypergraphs and core-periphery detection}

On top of combinatorial problems on graphs,  nonlinear eigenvalue problems appear in a variety of  hypergraph mining settings where the optimization of suitable discrete functions is required. Nonlinearity is particularly important when we deal with a hypergraph, as the presence of higher-order node interactions naturally leads to nonlinear eigenvalue equations and the corresponding nonlinear operators. Examples include submodular and diffusion-inspired hypergraph Laplacians \cite{chan2018spectral,li2018submodular}, tensor-based Laplacians \cite{gautier2019unifying,hu2015laplacian}, and game-theoretic homogeneous Laplacians \cite{flores2022analysis}. To provide a concrete example, we consider here the  core-periphery detection problem on hypergraphs, as formulated in  \cite{TudiscoHigham22}. 

Consider a hypergraph $H=(V,E,w)$ made by a set of vertices $V=\{1,\ldots,n\}$, hyperedges  $E=\{e_{1},\cdots,e_{m}\}$ and the weight function $w:E\to \R_+$. Here, unlike the graph case, each $e\in E$ contains an arbitrary number of nodes. 
The core-periphery detection problem consists of identifying the optimal subdivision of $V$ into a core set highly connected with the rest of $H$ and a periphery set, connected only (or mostly) to the core.

It is shown in \cite{TudiscoHigham22} that this combinatorial problem on $H$ boils down to the norm-constrained
optimization problem, 
\begin{equation}\label{eq:cp}
    \max \sum_{e\in E}w_{e}\|x|_e\|_q\quad \text{s.t.}\quad \|x\|_p=1\, .
\end{equation}
Clearly, if $f(\vec x)=\sum_{e\in E}w_{e}\|\vec x|_e\|_q$ and $g(\vec x)=\|\vec x\|_p$, the above problem coincides with the   largest eigenvalue of the nonlinear eigenvalue problem for the functions pair $(f,g)$.  Now, we shall write down the dual eigenvalue problem, i.e., the eigenvalue problem for the   function pair $(\D g, \D f)$.

For $g$ we have $\D g(\vec x)=\|\vec x\|_{p^*}$, where $1/p+1/p^*=1$. As for $f$, note that 
\[f(\vec x)=\|(\|\vec x|_{e_1}\|_q,\ldots,\|\vec x|_{e_m}\|_q)\|_{1,w}\, ,
\]
where $\|\cdot\|_{1,w}$ indicates the weighted $l^1$-norm on $\R^E$. 
Then, by Proposition \ref{pro:compo-norm}, we have 
\[\D f(\vec x)=\inf\limits_{\substack{\sum_{e\in E}\vec y_{e}=\vec x\\ \mathrm{supp}(\vec y_{e})\subset e}}\left\|\Big( \frac{\|\vec y_{e_1}\|_{q^*}}{w_{e_1}},\ldots,\frac{\|\vec y_{e_m}\|_{q^*}}{w_{e_m}}\Big)\right\|_{\infty}=\inf\limits_{\substack{\sum_{e\in E}\vec y_e=\vec x\\ \mathrm{supp}(\vec y_e)\subset e}}\max\limits_{e\in E}\frac{\|\vec y_e\|_{q^*}}{w_{e}}
\]
where $\vec y_e$ denotes a  vector in $\R^V$ with the support in $e$. 

Moreover, using  Corollary \ref{cor:main2} we can obtain additional equivalent formulations.  
For $e\in E$, let $A_e:\R^V\to \R^V$ be a matrix   defined as  $(A_e\vec x)_i=x_i$ if $i\in e$ and $(A_e\vec x)_i=0$ if $i\not\in e$.  Clearly, $\|A_e\vec x\|_q=\|\vec x|_e\|_q$. Thus, we can write $f$ as $f=\tilde{f}\circ A$, i.e.,  $f(\vec x)=\tilde{f}( A\vec x)$, where $\tilde{f}:\R^{nm}\to[0,+\infty)$ is the norm defined as 
\[\tilde{f}(\vec y_{1},\dots,\vec y_{m})=\|(\|\vec y_{1}\|_q,\ldots,\|\vec y_{m}\|_q)\|_{1,w}
\]
with $\vec y_{j}\in\R^n$ and $A:\R^n\to \R^{nm}$  defined as $A\vec x=(A_{e_1}\vec x,\dots,A_{e_m}\vec x)$. Thus, we immediately see that 
\[\D \tilde{f}(\vec y_{1},\dots,\vec y_{m})=\|(\|\vec y_{1}\|_{q^*}/w_{e_1},\ldots,\|\vec y_{m}\|_{q^*}/w_{e_m})\|_{\infty}
\]
for any vector $(\vec y_{1},\dots,\vec y_{m})$ of dimension $n\times m$.  By Corollary \ref{cor:main2}, the largest eigenvalue of the dual eigenvalue problem  $(\D\g\circ A^\top,\D\tilde{f})$, i.e.,
\[
0\in \partial \|\sum_{i=1}^mA_{e_i}^\top\vec y_i
\|_{p^*} -\lambda \partial \left\|\Big( \frac{\|\vec y_{1}\|_{q^*}}{w_{e_1}},\ldots,\frac{\|\vec y_{m}\|_{q^*}}{w_{e_m}}\Big)\right\|_{\infty}
\]
coincides with the core-periphery eigenvalue problem \eqref{eq:cp} for $(f,g)$.

\subsection{Distance between  convex bodies}

The Banach-Mazur distance is a key quantity in  convex geometry and functional analysis, which has led  to noteworthy progress in both those areas, see e.g.\ \cite{JS21}. 
Here, we focus on the  Banach-Mazur distance in its multiplicative form between two centrally symmetric convex bodies $K$ and $L$,  centered at the origin point $\vec 0$ in $\R^n$. This distance is defined as 
\begin{equation}\label{eq:BM-distance}
d(K,L)=\inf\{r\ge 1:\exists A\in GL(\R^n)\text{ s.t. } L\subset AK\subset rL\},    
\end{equation}
where  $GL(\R^n)$  is the  general linear group. 
By translating our spectral duality properties into the language of convex geometry we can immediately obtain properties about this distance between convex bodies via the eigenvalue problem for function pairs. 

For two convex bodies $K$ and $L$ containing the origin as an  interior  point, there exists some scaling constant $\lambda>0$ such that the two convex surfaces  $\partial K$ and  $\lambda \partial L$ are tangent to each other at some  point, where  $\partial K$ and $ \partial L$ are the boundary surfaces of the bodies $K$ and $L$, respectively.  Here, we say that two convex surfaces are tangent at $\vec a$ if they have a  common  supporting hyperplane at $\vec a$. Let 
\[
\mathcal{ST}(K,L)=\{\lambda>0 : \partial K\text{ and }\lambda\partial L\text{ are tangent}\}\, .
\]
A first key observation is  
that $\mathcal{ST}(K,L)$ is a compact subset of $(0,+\infty)$ and it coincides with the set of all the nonzero eigenvalues of the function pair  $(\|\cdot\|_K,\|\cdot\|_L)$, where $\|\cdot\|_K$ is the  Minkowski functional norm of $K$, i.e., the norm such that  $K = \{\vec x\in\R^n:\|\vec x\|_K\le 1\}$. Also, it is easy to see that $\mathcal{ST}(L,K)=\{\lambda^{-1}:\lambda\in \mathcal{ST}(K,L)\}$. For such a pair of convex bodies $K$ and $L$, we can still  use \eqref{eq:BM-distance} to define their simple Banach-Mazur distance and use our spectral duality to introduce new distances and observe new identities.

Preciesely, let $\lambda_{\max}(K,L)=\max\{\lambda: \lambda \in \mathcal{ST}(K,L)\}$ and $\lambda_{\min}(K,L)=\min\{\lambda: \lambda \in \mathcal{ST}(K,L)\}$. Corollary \ref{cor:main2}  implies that $\mathcal{ST}(A^\top L^*,K^*)=\mathcal{ST}(AK,L)$ for any $n\times n$ invertible matrix $A$. Thus, we have the following new representation of the Banach-Mazur distance 
\begin{align*}
d(K,L)&=
\inf_{A\in GL(\R^n)}  \frac{\lambda_{\max}(AK,L)}{\lambda_{\min}(AK,L)}=\min_{A\in SL(\R^n)}  \frac{\lambda_{\max}(AK,L)}{\lambda_{\min}(AK,L)} \\
&=\min_{A\in SL(\R^n)} \lambda_{\max}(AK,L)\lambda_{\max}(A^\top L^*,K^*)
\end{align*}
where  $SL(\R^n)$ indicates  the special linear group, i.e., the set of matrices with determinants equal to one. 

From this formulation, we immediately obtain $d(K,L)=d(K^*,L^*)$, which generalizes the known equality for symmetric convex bodies to the nonsymmetric case. In fact, by Corollary \ref{cor:main2}, $\lambda_{\max}(A^\top L^*,K^*)=\lambda_{\max}(AK,L)$ and $\lambda_{\min}(A^\top L^*,K^*)=\lambda_{\min}(AK,L)$, and thus,
$$d(L^*,K^*)=
\inf_{A^\top\in GL(\R^n)}  \frac{\lambda_{\max}(A^\top L^*,K^*)}{\lambda_{\min}(A^\top L^*,K^*)}=\inf_{A\in GL(\R^n)}  \frac{\lambda_{\max}(AK,L)}{\lambda_{\min}(AK,L)}=d(K,L).
$$

Using a similar argument we can obtain a similar result for other distances. In particular, consider the distance defined by \[
\widehat{d}(K,L)=\inf\Big\{r\ge 1: \frac1rL\subset K\subset rL\Big\}\, .
\]
This distance is used for studying floating and illumination  bodies  \cite{MW19},  and is   equivalent to the  Goldman-Iwahori metric introduced for Bruhat-Tits buildings \cite{Haettel22}. 
We have 
\[
\widehat{d}(K,L)=\max\{\lambda_{\max}(K,L),\frac{1}{\lambda_{\min}(K,L)}\}=\max\{\lambda_{\max}(K,L), \lambda_{\max}(L,K)\}
\]
and from this new representation, we can easily obtain  the duality identity $\widehat{d}(K^*,L^*)=\widehat{d}(K,L)$ via Theorem \ref{thm:main1} 
and the discussion above.


%

\end{document}